\title[Bihermitian Geometry and the Holomorphic Sections of Twistor Space]{Bihermitian Geometry and the Holomorphic Sections of Twistor Space}
\author{Steven Gindi}
\newtheorem{thm}{Theorem}[section]
\newtheorem{lemma}[thm]{Lemma}
\newtheorem{prop}[thm]{Proposition}
\newtheorem{cor}[thm]{Corollary}
\newtheorem{rmk}[thm]{Remark}
\newtheorem{nota}[thm]{Notation}
\theoremstyle{definition} \newtheorem{example}[thm]{Example}
\theoremstyle{definition}  \newtheorem{defi}[thm]{Definition}
\numberwithin{equation}{section}
\begin{document}
\begin{large}
\begin{abstract}
We use our recently introduced holomorphic twistor spaces to derive results about the complex geometries of their base manifolds. In particular, we use these twistor spaces to develop a new approach to studying generalized Kahler manifolds. This leads to insights into their real and holomorphic Poisson structures.
\end{abstract}
\maketitle
\tableofcontents

\newpage
\section{Introduction}
Recently, in \cite{Gindi1} we introduced integrable complex structures on twistor spaces fibered over complex manifolds, equipped with certain geometrical data. The resulting holomorphic spaces were shown to arise naturally in different contexts such as when the base manifold is bihermitian, also known as generalized Kahler (\cite{Rocek1, Apost1, Gualt1}). In this paper, we demonstrate how to use these twistor spaces to derive results about the complex geometries of the base manifold. In particular, we develop a new way of thinking about bihermitian manifolds that leads to insights into their real and holomorphic Poisson structures. 

Our first application of holomorphic twistor spaces is given in Section \ref{SecHSMAIN}. There we use their holomorphic sections to decompose the base manifold into different types of holomorphic subvarieties, denoted by $M^{\delta}$ (Theorem \ref{ThmPHS}). The main idea behind this construction is that the holomorphic sections of twistor space not only induce holomorphic bundles over the base manifold but different types of holomorphic bundle maps as well. Some of the $M^{\delta}$ then correspond to the degeneracy loci of these maps while others refine their structure. 

We then establish in Section \ref{SecATVP} a twistor point of view of the $M^{\delta}$ by realizing them as intersections of different complex submanifolds and holomorphic subvarieties in twistor space. This allows us to develop tools to study the $M^{\delta}$ inside  this space and leads us to derive a number of results about them. Our first result is given in Section \ref{SecBINT} where we establish lower bounds on their dimensions.  Secondly we determine necessary conditions for there to exist curves in the base manifold that lie in certain $M^{\delta}$ (Propositions \ref{PropMPC} and \ref{PropMPC2}). As described in Section \ref{SecMAC}, these conditions lead to \textit{upper bounds} on the dimensions of the subvarieties. 

To demonstrate the importance of these results, we will now describe 
two of our major classes of examples of a holomorphic twistor space equipped with holomorphic sections---when the base manifold is a bihermitian manifold and a  holomorphic twistor space. As part of the second example, we will provide more details as to how we used twistor spaces to derive the above results. 

\subsection{Bihermitian and Generalized Kahler Manifolds}
  A bihermitian manifold is a Riemannian manifold equipped with a pair of complex structures that satisfy certain relations (Section \ref{SecBG}). These manifolds were first introduced by physicists in \cite{Rocek1}, as the target spaces of  supersymmetric sigma models, and were later found to be equivalent to (twisted) generalized Kahler manifolds \cite{Gualt1,Hitchin1} (see also \cite{Apost1}). Consequently, there are several approaches in the literature that are used to study these manifolds; and in this paper, we introduce yet another---we study them via their twistor spaces.
  
  Indeed, one of our major results of \cite{Gindi1} is that the twistor space of a bihermitian manifold admits two integrable complex structures. In Section \ref{SecBM} of the present paper we further demonstrate that these holomorphic twistor spaces admit natural holomorphic sections. By then applying Theorem \ref{ThmPHS} to this case, we decompose the  bihermitian manifold in Section \ref{SecBM} into holomorphic subvarieties that are new to the literature. 
  
  The importance of these subvarieties lies in their connection to known Poisson structures on the  manifold: Some of the subvarieties are the degeneracy loci of a holomorphic Poisson structure while the ones that are new to the literature are surprisingly the loci of  \textit{real} Poisson structures. At the same time, there are others that  refine the structure of both of these loci. As a consequence, we can now study the Poisson structures on a bihermitian manifold by using the new tools from twistor spaces that were described above. 

For instance, by applying the general bounds of Theorem \ref{ThmBOUNDS}, we derive in Section \ref{secBMBPG} existence results about the subvarieties in bihermitian manifolds. In particular, we demonstrate that there are classes of bihermitian structures on $\mathbb{CP}^{3}$ that cannot admit certain $M^{\delta}$. As these subvarieties refine the degeneracy loci of the corresponding holomorphic Poisson structures on $\mathbb{CP}^{3}$, our results provide new information about the structure of these loci.

\subsection{Stratifications of Twistor Spaces}
Our second major class of examples of a twistor space that is equipped with holomorphic sections is when the base manifold is itself a holomorphic twistor space (Section \ref{SecSTS1}). In this case, we not only produce different stratifications of twistor spaces, whose strata are complex submanifolds  and holomorphic subvarieties, but also use these structures to derive the results about the general $M^{\delta}$ of Section \ref{SecGTM}.

As we show in Section \ref{SecSTS1}, some of the complex submanifolds that we produce in twistor space can be viewed as Schubert cells in a certain  Grassmannian space. By using this correspondence and defining special charts for the twistor space, we determine the dimensions  of these submanifolds (as well as the dimensions of the other subvarieties) and describe their tangent bundles.  

These properties are in fact important in our derivation of the results about the $M^{\delta}$ given in Section \ref{SecGTM}. The way that we derive them is to first holomorphically embed the base manifold into its twistor space and then, as mentioned above, to realize the $M^{\delta}$ as intersections of the different complex submanifolds and holomorphic subvarieties (Section \ref{SecATVP}).  One advantage of this point of view is that the codimension of the intersection of any two holomorphic subvarieties is always bounded from above by the sum of their codimensions.  Being that we have already determined the dimensions of the subvarieties in twistor space in Section \ref{SecTB23}, we arrive at the bounds on the $M^{\delta}$ given in Section \ref{SecBINT}. Moreover, we also apply  the description of the tangent bundles given in Section \ref{SecTB23} to derive the necessary conditions for there to exist curves that lie in certain $M^{\delta}$ as specified in Propositions \ref{PropMPC} and \ref{PropMPC2}. 

We will now begin by reviewing the integrable complex structures on twistor spaces that we introduced in \cite{Gindi1}.

\section{Complex Structures on Twistor Spaces}
Let $E \longrightarrow (M,I)$ be a rank $2n$ real vector bundle that is fibered over a complex manifold, with complex structure $I$, and let $\mathcal{C}(E)= \{J \in EndE | \ J^{2}=-1\}$ be its twistor space. In the case when $g$ is a positive definite, fiberwise metric on $E$ we will also consider the twistor space $\mathcal{T}(E,g)= \{J \in \mathcal{C}(E) | \ g(J\cdot,J\cdot)=g(\cdot,\cdot)\}$.

\begin{nota} At times we will denote $\mathcal{C}(E)$ by $\mathcal{C}$ and $\mathcal{T}(E,g)$ by $\mathcal{T}$.
\end{nota}

Letting $\nabla$ be a connection on $E$, we will now define the almost complex structure $\mathcal{J}^{(\nabla,I)}$ on the total space of $\pi: \mathcal{C}(E) \longrightarrow M$ (where $\pi$ is the natural projection map): 
\begin{defi} 
\begin{enumerate}
 \item[]
 \item[1)]  Use $\nabla$ to split $T\mathcal{C}$ into $V\mathcal{C} \oplus H^{\nabla}\mathcal{C}$, the direct sum of  vertical and horizontal distributions (see \cite{Gindi1} for more details). \\
 \item[2)] Define \begin{equation*}
 \mathcal{J}^{(\nabla,I)}= \mathcal{J}^{V} \oplus \pi^{*}I,
 \end{equation*}
 where $\mathcal{J}^{V}$ is the standard fiberwise complex structure on $V\mathcal{C}$ and where we have identified $H^{\nabla}\mathcal{C}$ with $\pi^{*}TM$. 
 \end{enumerate}
 \end{defi}
 Letting $R^{\nabla}$ be the curvature of $\nabla$, in \cite{Gindi1} we proved
 
 \begin{thm}
 \label{thmINT}
 If $R^{\nabla}$ is of type (1,1), i.e. $R^{\nabla}(I\cdot,I\cdot)= R^{\nabla}(\cdot,\cdot)$, then $\mathcal{J}^{(\nabla,I)}$ is an integrable complex structure on $\mathcal{C}$.
 \end{thm}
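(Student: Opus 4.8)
The plan is to invoke the Newlander--Nirenberg theorem and thereby reduce integrability of $\mathcal{J}^{(\nabla,I)}$ to involutivity of its $(+i)$-eigenbundle $T^{1,0}\mathcal{C}$, equivalently to the vanishing of the Nijenhuis tensor $N_{\mathcal{J}}$. Since $N_{\mathcal{J}}$ is tensorial, I would evaluate it on a local frame adapted to the splitting $T\mathcal{C}=V\mathcal{C}\oplus H^{\nabla}\mathcal{C}$: vertical vector fields tangent to the fibers of $\pi$, and horizontal lifts $X^{H}$ of vector fields $X$ on $M$. This separates the verification into three cases --- vertical--vertical, vertical--horizontal, and horizontal--horizontal --- which I would dispatch in turn.

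For the vertical--vertical case, restricting everything to a single fiber $\pi^{-1}(x)=\mathcal{C}(E_{x})$ collapses $\mathcal{J}^{(\nabla,I)}$ to the tautological structure $\mathcal{J}^{V}$. Because each such fiber, with this standard structure, is a (homogeneous) complex manifold, the term vanishes automatically. For the vertical--horizontal case, I would first note that $[v,X^{H}]$ is purely vertical: as $v$ is $\pi$-related to $0$ and $X^{H}$ to $X$, the bracket is $\pi$-related to $[0,X]=0$. It then remains to see that this vertical bracket lies in $V^{1,0}\mathcal{C}$. Using the fiberwise description $V^{1,0}_{J}\mathcal{C}\cong \operatorname{Hom}(E^{0,1}_{J},E^{1,0}_{J})$, where $E^{1,0}_{J},E^{0,1}_{J}$ are the $(\pm i)$-eigenbundles of $J$, together with the fact that moving along a horizontal curve transports $J$ (hence the splitting $E\otimes\mathbb{C}=E^{1,0}_{J}\oplus E^{0,1}_{J}$) by $\nabla$-parallel transport, the covariant derivative along $X^{H}$ preserves this $\operatorname{Hom}$-space. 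Since $\pi^{*}I$ is pulled back from $M$ and hence constant along the fibers, no further obstruction arises.

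The heart of the argument is the horizontal--horizontal case, where I expect the main obstacle to lie. I would use the standard identity expressing the failure of the horizontal distribution to be integrable in terms of the curvature,
\[
[X^{H},Y^{H}] \;=\; [X,Y]^{H} \;-\; \bigl(R^{\nabla}(X,Y)\bigr)^{\sharp},
\]
where $\xi^{\sharp}$ denotes the vertical fundamental vector field generated by $\xi\in\operatorname{End}(E)$ acting on the fiber by conjugation, $\xi^{\sharp}_{J}=[\xi,J]$. Taking $X,Y$ of type $(1,0)$ on $M$ (so that $X^{H},Y^{H}$ are of type $(1,0)$ on $\mathcal{C}$), integrability of $I$ makes $[X,Y]^{H}$ again horizontal of type $(1,0)$, so closure of $T^{1,0}\mathcal{C}$ reduces to the vertical correction $\bigl(R^{\nabla}(X,Y)\bigr)^{\sharp}$. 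The key computation is that $R^{\nabla}(X,Y)=0$ whenever $X,Y$ are both of type $(1,0)$, precisely because $R^{\nabla}$ is of type $(1,1)$: indeed $R^{\nabla}(IX,IY)=R^{\nabla}(iX,iY)=-R^{\nabla}(X,Y)$, and the hypothesis $R^{\nabla}(IX,IY)=R^{\nabla}(X,Y)$ forces this $(2,0)$-part to vanish. Hence the correction term disappears and $[X^{H},Y^{H}]=[X,Y]^{H}\in T^{1,0}\mathcal{C}$. Combining the three cases yields $N_{\mathcal{J}}\equiv 0$, and Newlander--Nirenberg gives integrability. The points I would budget the most care for are the precise identification of the vertical component of the horizontal bracket with the curvature fundamental field (sign and scaling conventions included), and the check in the vertical--horizontal case that $\nabla$-parallel transport respects the fiberwise $(1,0)$-decomposition.
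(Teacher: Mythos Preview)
The paper does not actually supply a proof of this theorem; it merely records the statement and attributes the argument to the companion paper \cite{Gindi1}. There is therefore no in-paper proof to compare your proposal against.

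That said, your argument is correct and is precisely the standard proof of this type of result in the twistor literature. The three-case split along $T\mathcal{C}=V\mathcal{C}\oplus H^{\nabla}\mathcal{C}$, together with the identity $[X^{H},Y^{H}]=[X,Y]^{H}-\bigl(R^{\nabla}(X,Y)\bigr)^{\sharp}$ and the observation that the $(2,0)$-part of $R^{\nabla}$ vanishes under the $(1,1)$ hypothesis, is exactly how this is done in \cite{Gindi1} and elsewhere. Your flagged subtleties---the sign convention in the curvature identity and the fact that the flow of $X^{H}$ (parallel transport acting by conjugation) preserves the fiberwise $\mathcal{J}^{V}$, hence $V^{1,0}\mathcal{C}$---are the right places to be careful, and your reasoning on both is sound.
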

 
 Assuming that $R^{\nabla}$ is (1,1), we have 

\begin{prop} 
\label{PropHS}
$\pi: (\mathcal{C}, \mathcal{J}^{(\nabla,I)}) \longrightarrow (M,I)$ is a holomorphic submersion.
\end{prop}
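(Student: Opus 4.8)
The plan is to verify the two defining properties directly from the construction of $\mathcal{J}^{(\nabla,I)}$. Recall that a smooth map $\pi$ between complex manifolds $(\mathcal{C}, \mathcal{J}^{(\nabla,I)})$ and $(M,I)$ is holomorphic precisely when its differential intertwines the two complex structures, i.e.\ $d\pi \circ \mathcal{J}^{(\nabla,I)} = I \circ d\pi$, and it is a submersion precisely when $d\pi$ is everywhere surjective. Both spaces are genuine complex manifolds in our situation: the source because $R^{\nabla}$ is of type $(1,1)$, so that $\mathcal{J}^{(\nabla,I)}$ is integrable by Theorem \ref{thmINT}, and the target by hypothesis. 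Thus it remains only to check the two bracketed conditions.

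The submersion property is immediate from the bundle structure. Since $\pi: \mathcal{C} \longrightarrow M$ is the projection of a fiber bundle, $d\pi$ is surjective at every point, with kernel exactly the vertical distribution $V\mathcal{C}$. Moreover, the splitting $T\mathcal{C} = V\mathcal{C} \oplus H^{\nabla}\mathcal{C}$ induced by $\nabla$ is arranged so that $d\pi$ restricts to an isomorphism $H^{\nabla}\mathcal{C} \xrightarrow{\ \sim\ } TM$; this is precisely the identification of $H^{\nabla}\mathcal{C}$ with $\pi^{*}TM$ that was used to define $\mathcal{J}^{(\nabla,I)}$.

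To verify the intertwining relation, I would decompose an arbitrary tangent vector as $X = X^{V} + X^{H}$ with $X^{V} \in V\mathcal{C}$ and $X^{H} \in H^{\nabla}\mathcal{C}$, and treat the two summands separately. On the vertical part, $\mathcal{J}^{(\nabla,I)} X^{V} = \mathcal{J}^{V} X^{V}$ again lies in $V\mathcal{C} = \ker d\pi$, so both $d\pi(\mathcal{J}^{(\nabla,I)} X^{V})$ and $I\, d\pi(X^{V})$ vanish. On the horizontal part, $\mathcal{J}^{(\nabla,I)} X^{H} = (\pi^{*}I) X^{H}$ by definition, and applying $d\pi$ together with the identification from the previous paragraph sends this to $I\, d\pi(X^{H})$. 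Summing the two contributions yields $d\pi \circ \mathcal{J}^{(\nabla,I)} = I \circ d\pi$, completing the argument.

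The only genuinely substantive point---and the step I would be most careful about---is the compatibility invoked in the horizontal case: one must confirm that, under the chosen identification $H^{\nabla}\mathcal{C} \cong \pi^{*}TM$, the operator $\pi^{*}I$ really does correspond to $I$ via $d\pi$, so that $d\pi\bigl((\pi^{*}I)\,X^{H}\bigr) = I\, d\pi(X^{H})$. This is built into the very meaning of the pullback $\pi^{*}I$, and once it is granted, everything else is a formal consequence of the direct-sum construction of $\mathcal{J}^{(\nabla,I)}$.
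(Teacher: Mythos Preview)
Your argument is correct and is the natural direct verification from the definition of $\mathcal{J}^{(\nabla,I)}$. The paper itself does not supply a proof of this proposition; it is stated without proof, presumably because it is immediate from the block form $\mathcal{J}^{(\nabla,I)} = \mathcal{J}^{V} \oplus \pi^{*}I$ (and the details of the twistor construction are deferred to the companion paper \cite{Gindi1}). What you have written is exactly the standard check one would give, so there is nothing to compare.
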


In the case when $g$ is a fiberwise metric on $E$ and $\nabla$ is a metric connection, we have: 
\begin{prop}  $\mathcal{T}$ is a complex submanifold of $(\mathcal{C},\mathcal{J}^{(\nabla,I)})$. 
\end{prop}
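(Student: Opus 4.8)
The plan is to verify two things and then invoke integrability. First, that $\mathcal{T}$ is a smooth (real) submanifold of $\mathcal{C}$, and second, that its tangent spaces are invariant under $\mathcal{J}^{(\nabla,I)}$. Once these hold, $\mathcal{T}$ is automatically a \emph{complex} submanifold: the almost complex structure induced on a $\mathcal{J}^{(\nabla,I)}$-invariant submanifold of the complex manifold $(\mathcal{C},\mathcal{J}^{(\nabla,I)})$, integrable by Theorem \ref{thmINT}, has Nijenhuis tensor equal to the restriction of the ambient one, which vanishes. So the whole task reduces to exhibiting the submanifold structure and checking $\mathcal{J}^{(\nabla,I)}$-invariance of $T\mathcal{T}$.

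For the submanifold structure, I would observe that fiberwise $\mathcal{T}_x=\{J\in\mathcal{C}_x : g_x(J\cdot,J\cdot)=g_x(\cdot,\cdot)\}$ is the space of $g_x$-orthogonal complex structures on $E_x$, a smooth compact submanifold of $\mathcal{C}_x$, and that these fit together into a smooth fiber subbundle $\mathcal{T}\subset\mathcal{C}$ over $M$. In particular $\pi|_{\mathcal{T}}$ is again a submersion, so $T_J\mathcal{T}=V_J\mathcal{T}\oplus(H^\nabla_J\mathcal{C}\cap T_J\mathcal{T})$ with $V_J\mathcal{T}:=V_J\mathcal{C}\cap T_J\mathcal{T}$. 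I would then identify the vertical part explicitly: differentiating the two defining relations $J^2=-1$ and $g(J\cdot,J\cdot)=g(\cdot,\cdot)$ along a vertical curve through $J$, and using that $J\in\mathcal{T}$ is $g$-skew ($J^{*}=-J$), shows that $V_J\mathcal{T}$ consists exactly of those endomorphisms $A$ that anticommute with $J$ and are $g$-skew, $A^{*}=-A$. Since the standard fiberwise structure $\mathcal{J}^{V}$ acts by composing with $J$, a short check that $JA$ is again $g$-skew and again anticommutes with $J$ shows $\mathcal{J}^{V}$ preserves $V_J\mathcal{T}$; this is routine linear algebra.

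The conceptual crux, and the step I expect to be the main obstacle, is the horizontal direction, where the hypothesis that $\nabla$ is a metric connection enters. The idea is that $H^\nabla_J\mathcal{C}$ consists of velocities of curves $t\mapsto P_t J P_t^{-1}$ obtained by conjugating $J$ with $\nabla$-parallel transport $P_t$ along curves in $M$. Because $\nabla g=0$, each $P_t$ is a $g$-isometry, so conjugation by $P_t$ carries $g$-orthogonal complex structures to $g$-orthogonal complex structures; hence these curves stay in $\mathcal{T}$ whenever $J\in\mathcal{T}$, giving $H^\nabla_J\mathcal{C}\subset T_J\mathcal{T}$. Thus $H^\nabla_J\mathcal{C}\cap T_J\mathcal{T}=H^\nabla_J\mathcal{C}$, and since $\dim V_J\mathcal{T}+\dim M=\dim\mathcal{T}$ the two summands exhaust the tangent space, so $T_J\mathcal{T}=V_J\mathcal{T}\oplus H^\nabla_J\mathcal{C}$.

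Finally I would assemble invariance. On $H^\nabla_J\mathcal{C}$ the structure $\mathcal{J}^{(\nabla,I)}$ acts as $\pi^{*}I$, which preserves $H^\nabla_J\mathcal{C}$ (identified with $\pi^{*}T_xM$), and $H^\nabla_J\mathcal{C}\subset T_J\mathcal{T}$ by the previous step; on $V_J\mathcal{T}$ it acts as $\mathcal{J}^{V}$, which preserves $V_J\mathcal{T}$ by the vertical computation. Hence $\mathcal{J}^{(\nabla,I)}(T_J\mathcal{T})=T_J\mathcal{T}$, completing the verification and, with the integrability reduction of the first paragraph, the proof.
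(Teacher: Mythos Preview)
The paper does not include a proof of this proposition; it is stated as part of the setup, with the argument deferred to \cite{Gindi1}. Your proof is correct and is precisely the natural one: use $\nabla g=0$ to see that $\nabla$-parallel transport preserves $\mathcal{T}$, hence $H^{\nabla}_J\mathcal{C}\subset T_J\mathcal{T}$ and $T_J\mathcal{T}=V_J\mathcal{T}\oplus H^{\nabla}_J\mathcal{C}$; then check $\mathcal{J}^{V}$ preserves the $g$-skew anticommutant of $J$, which is the routine linear algebra you indicated.
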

 
\section{Some Examples}

In \cite{Gindi1}, we described various examples of bundles that admit connections with (1,1) curvature and the resulting complex structures on the twistor spaces. In this section, we recall how to define complex structures on the twistor space associated to any Hermitian manifold equipped with a $\overline{\partial}$ closed (2,1) form and, in particular, to any SKT or bihermitian manifold. For this, we will be using a general correspondence between connections on $E \longrightarrow (M,I)$ with (1,1) curvature and $\overline{\partial}-$operators on $E_{\mathbb{C}}:=E \otimes_{\mathbb{R}}\mathbb{C}$. To describe it, let us $\mathbb{C}$-linearly extend a connection $\nabla$ on $E$ to a complex connection on $E_{\mathbb{C}}$ and denote the corresponding (0,1) connection by $\nabla^{0,1}$; we then have: 
\begin{lemma}
\label{LemCOR}
$R^{\nabla}$ is (1,1) if and only if $(\nabla^{0,1})^{2}=0.$ Moreover, given a $\overline{\partial}-$operator  $\overline{\partial}$ on $E_{\mathbb{C}}$, there is a unique connection $\nabla$ on $E $ such that $\nabla^{0,1}= \overline{\partial}$. 
\end{lemma}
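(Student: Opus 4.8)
The plan is to prove both assertions by decomposing the $\mathbb{C}$-linearly extended connection into its bidegree components and exploiting the reality of $E$ inside $E_{\mathbb{C}}$. Writing $\nabla = \nabla^{1,0} + \nabla^{0,1}$ for the splitting induced by $T_{\mathbb{C}}M = T^{1,0}M \oplus T^{0,1}M$, I would first compute
\begin{equation*}
R^{\nabla} = \nabla^{2} = (\nabla^{1,0})^{2} + \bigl(\nabla^{1,0}\nabla^{0,1} + \nabla^{0,1}\nabla^{1,0}\bigr) + (\nabla^{0,1})^{2}
\end{equation*}
and observe that the three summands are precisely the $(2,0)$, $(1,1)$, and $(0,2)$ components of $R^{\nabla}$, since $\nabla^{1,0}$ raises holomorphic degree and $\nabla^{0,1}$ antiholomorphic degree. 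In particular the pure $(0,2)$ part equals $(\nabla^{0,1})^{2}$, which is tensorial. The condition that $R^{\nabla}$ be of type $(1,1)$, i.e. $R^{\nabla}(I\cdot,I\cdot)=R^{\nabla}(\cdot,\cdot)$, is exactly the vanishing of its $(2,0)$ and $(0,2)$ components.

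To establish the equivalence in both directions at once, I would use that $R^{\nabla}$ is a \emph{real} $\mathrm{End}(E)$-valued two-form, as $\nabla$ is a connection on the real bundle $E$. Complex conjugation interchanges $(2,0)$- and $(0,2)$-forms, so the $(2,0)$ and $(0,2)$ parts of $R^{\nabla}$ are complex conjugates of one another. Hence the $(0,2)$ part vanishes if and only if the $(2,0)$ part does, which happens if and only if both vanish, i.e. if and only if $R^{\nabla}$ is $(1,1)$. Combined with the identification $(\nabla^{0,1})^{2} = (R^{\nabla})^{0,2}$, this yields $R^{\nabla}$ is $(1,1) \iff (\nabla^{0,1})^{2} = 0$.

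For the second statement, given a $\overline{\partial}$-operator $\overline{\partial}$ on $E_{\mathbb{C}}$, the idea is to recover the missing $(1,0)$ part by reality. Letting $\overline{(\cdot)}$ denote the conjugation on $E_{\mathbb{C}}$ with fixed locus $E$, I would define a $(1,0)$-operator by $\nabla^{1,0} s := \overline{\overline{\partial}\,\overline{s}}$, which lands in $\Omega^{1,0}(E_{\mathbb{C}})$ because conjugation sends $\Omega^{0,1}$ to $\Omega^{1,0}$. Using $\overline{\overline{\partial}\,\overline{f}} = \partial f$ for a function $f$, a short check gives $\nabla^{1,0}(fs) = (\partial f)s + f\nabla^{1,0}s$, so $\nabla^{1,0}$ obeys the Leibniz rule. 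Setting $\nabla := \nabla^{1,0} + \overline{\partial}$ then defines a connection on $E_{\mathbb{C}}$, and I would verify directly that $\overline{\nabla s} = \nabla \overline{s}$, which shows $\nabla$ commutes with conjugation and therefore restricts to a genuine connection on the real bundle $E$ whose $(0,1)$ part is $\overline{\partial}$ by construction.

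Uniqueness follows from the same reality constraint: any real connection satisfies $\overline{\nabla s} = \nabla \overline{s}$, and comparing $(1,0)$-components forces $\nabla^{1,0} s = \overline{\nabla^{0,1}\overline{s}}$, so the $(1,0)$ part is completely determined by the $(0,1)$ part. Thus $\nabla^{0,1} = \overline{\partial}$ pins down $\nabla$ uniquely. The step requiring the most care is the bookkeeping of the conjugation on $E_{\mathbb{C}}$ together with that on forms—in particular confirming that the operator built from $\overline{\partial}$ is real, so that it descends from $E_{\mathbb{C}}$ to the underlying real bundle $E$ rather than merely furnishing a connection on the complexification.
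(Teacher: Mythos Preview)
The paper does not actually supply a proof of this lemma; it is stated as a standard background fact and used freely thereafter. Your argument is correct and is the standard one: decompose $R^{\nabla}$ by bidegree, use reality of $R^{\nabla}$ to see that the $(2,0)$ and $(0,2)$ parts are conjugate (so one vanishes iff both do), and for the second assertion build $\nabla^{1,0}$ from $\overline{\partial}$ via conjugation on $E_{\mathbb{C}}$, with uniqueness forced by the same reality constraint. There is nothing to compare against in the paper itself, and your proof would serve perfectly well as the omitted justification.
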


\subsection{Three Forms}
\label{SecTF}
Now let $(M,g,I)$ be a Hermitian manifold equipped with a real three form $H= \overline{H^{2,1}} +H^{2,1}$ of type (1,2) + (2,1), such that $\overline{\partial}H^{2,1}=0$. We will use $H$ to define a connection on $TM$ with (1,1) curvature by first defining a $\overline{\partial}$-operator on $TM_{\mathbb{C}}$ and then by using the correspondence given in Lemma \ref{LemCOR}. If we denote the Chern connection on $TM$ by $\nabla^{Ch}$ then the $\overline{\partial}$-operator on $TM_{\mathbb{C}}=T^{1,0} \oplus T^{0,1}$ that we will consider is $\nabla^{Ch(0,1)} + g^{-1}H^{2,1}$. (Here, we are viewing $g^{-1}H^{2,1}$ as a section of $T^{*0,1} \otimes \mathfrak{so}(TM_{\mathbb{C}})$ by setting $g^{-1}H^{2,1}_{v}w=g^{-1}H^{2,1}(v,w,\cdot)$, for $v \in T^{0,1}$ and $w \in TM_{\mathbb{C}}$.) 
As the corresponding real connection is $\nabla^{Ch}+ \frac{1}{2}I[g^{-1}H,I]$, we have  
\begin{prop}
\label{PropTF} 
\begin{align*}
1) & \  \nabla= \nabla^{Ch}+ \frac{1}{2}I[g^{-1}H,I] \text{ is a metric connection on } TM \text{ with } \\ & \text { (1,1) curvature.} \\
2) &  \ \mathcal{J}^{(\nabla,I)} \text{ is a complex structure on } \mathcal{C} \text{ and } \mathcal{T}.
\end{align*}
 \end{prop}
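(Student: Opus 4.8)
The plan is to prove part 1) by passing to the associated $\overline{\partial}$-operator and invoking Lemma \ref{LemCOR}, and then to obtain part 2) as an immediate consequence of Theorem \ref{thmINT} together with the proposition that $\mathcal{T}$ is a complex submanifold of $(\mathcal{C},\mathcal{J}^{(\nabla,I)})$ whenever $\nabla$ is metric. For part 1) there are three things to verify: that $\nabla=\nabla^{Ch}+\frac{1}{2}I[g^{-1}H,I]$ is the real connection whose $(0,1)$-part is the prescribed operator $D:=\nabla^{Ch(0,1)}+g^{-1}H^{2,1}$; that $\nabla$ is metric; and that $D$ is a genuine $\overline{\partial}$-operator, i.e. $D^{2}=0$, so that Lemma \ref{LemCOR} yields $R^{\nabla}$ of type $(1,1)$.

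First I would record the algebraic behaviour of the operator $A\mapsto\frac{1}{2}I[A,I]$ on $\mathrm{End}(TM_{\mathbb{C}})$: writing $\frac{1}{2}I[A,I]=\frac{1}{2}(IAI+A)$ and decomposing $A$ into its type-preserving part (commuting with $I$) and its type-reversing part (anticommuting with $I$), one checks that the first is annihilated and the second is fixed, so $\frac{1}{2}I[\,\cdot\,,I]$ is exactly the projection onto the type-reversing endomorphisms. Feeding $\bar{u}\in T^{0,1}$ into $g^{-1}H$ and splitting $H=H^{2,1}+H^{1,2}$, a slot-counting argument shows that $g^{-1}H^{2,1}_{\bar u}$ sends $T^{1,0}\to T^{0,1}$ and kills $T^{0,1}$ (hence is type-reversing), while $g^{-1}H^{1,2}_{\bar u}$ preserves types; therefore the $(0,1)$-part of $\nabla$ equals $\nabla^{Ch(0,1)}+g^{-1}H^{2,1}=D$, as asserted in the text. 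The same projection computation gives metric compatibility: $g^{-1}H_{\bar u}$ and $I$ lie in $\mathfrak{so}(TM,g)$, their commutator $C$ anticommutes with $I$, and $(IC)^{T}=C^{T}I^{T}=CI=-IC$, so $\frac{1}{2}I[g^{-1}H,I]$ is $\mathfrak{so}(TM,g)$-valued; adding it to the metric Chern connection preserves $g$.

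The heart of the argument, and the step I expect to be the main obstacle, is $D^{2}=0$. Since $\nabla^{Ch(0,1)}$ already squares to zero (it is the holomorphic structure of $T^{1,0}$), the $(0,2)$-curvature of $D$ is $\overline{\partial}^{Ch}(g^{-1}H^{2,1})+(g^{-1}H^{2,1})\wedge(g^{-1}H^{2,1})$, where $\overline{\partial}^{Ch}$ is the covariant Dolbeault operator induced by $\nabla^{Ch}$. The quadratic term vanishes identically: for $\bar u,\bar v\in T^{0,1}$ the composite $g^{-1}H^{2,1}_{\bar u}\circ g^{-1}H^{2,1}_{\bar v}$ first lands in $T^{0,1}$ and is then killed, so the commutator is zero. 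For the linear term I would use that $\nabla^{Ch}$ is metric, so $\overline{\partial}^{Ch}$ commutes with $g^{-1}$, and then identify $T^{0,1}\cong(T^{1,0})^{*}$ via $g$; under this identification the skew-symmetric endomorphism $g^{-1}H^{2,1}$ corresponds to the $(2,1)$-form $H^{2,1}$ regarded as a $\Lambda^{2,0}T^{*}M$-valued $(0,1)$-form, and the $(0,1)$-part of the Chern connection on the tensor bundle $\Lambda^{2,0}T^{*}M$ is precisely its Dolbeault operator. Hence $\overline{\partial}^{Ch}(g^{-1}H^{2,1})$ is identified with $\overline{\partial}H^{2,1}$, which vanishes by hypothesis. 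The delicate point is matching the two $\overline{\partial}$-operators correctly---equivalently, checking that the Chern torsion, which is of type $(2,0)$, contributes nothing to the relevant $(0,2)$-component---and this is where care is needed. With $D^{2}=0$ established, Lemma \ref{LemCOR} shows $R^{\nabla}$ is $(1,1)$, completing part 1).

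Finally, part 2) follows at once: by part 1) the connection $\nabla$ has $(1,1)$ curvature, so Theorem \ref{thmINT} makes $\mathcal{J}^{(\nabla,I)}$ an integrable complex structure on $\mathcal{C}$; and since $\nabla$ is moreover metric, the proposition that $\mathcal{T}$ is a complex submanifold of $(\mathcal{C},\mathcal{J}^{(\nabla,I)})$ applies, giving the complex structure on $\mathcal{T}$ as well.
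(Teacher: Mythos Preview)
Your proposal is correct and follows exactly the route the paper takes: the proposition is stated in the paper as a direct consequence of the preceding paragraph, which introduces the $\overline{\partial}$-operator $\nabla^{Ch(0,1)}+g^{-1}H^{2,1}$ on $TM_{\mathbb C}$, identifies the corresponding real connection via Lemma~\ref{LemCOR} as $\nabla^{Ch}+\tfrac12 I[g^{-1}H,I]$, and then invokes Theorem~\ref{thmINT} and the complex-submanifold proposition for $\mathcal T$. You have simply filled in the verifications (that the $(0,1)$-part is as claimed, that $\nabla$ is metric, and that $D^{2}=0$ via the vanishing of the quadratic term and the identification of the linear term with $\overline{\partial}H^{2,1}$) which the paper leaves to the reader.
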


\subsubsection{SKT Manifolds} 
\label{SecSKT}
A Hermitian manifold, $(M,g,I)$, is by definition SKT (strong Kahler with torsion) if the three form $H=-d^{c}w=i(\partial -\overline{\partial})w$ satisfies $dH=0$, where $w(\cdot,\cdot)=g(I\cdot,\cdot)$ \cite{Skt2,Skt1}. As this condition is equivalent to $\overline{\partial}H^{2,1}=0$, by Proposition \ref{PropTF} $ \nabla^{Ch}- \frac{1}{2}I[g^{-1}H,I]$ is a particular connection on $TM$ with (1,1) curvature. It can be shown to equal $\nabla^{-}:= \nabla^{Levi} -\frac{1}{2}g^{-1}H$, where $\nabla^{Levi}$ is the Levi Civita connection, and is closely related to the Bismut connection $\nabla^{+}:= \nabla^{Levi} +\frac{1}{2}g^{-1}H$ \cite{Bis1,Guad1}. We thus have

\begin{cor}
\label{CorSKTCS}
If $(M,g,I)$ is SKT then $(\mathcal{C}, \mathcal{J}^{(\nabla^{-},I)})$ is a complex manifold and $\mathcal{T}$ is a complex submanifold.
  \end{cor}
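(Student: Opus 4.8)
The plan is to present this Corollary as a direct specialization of Proposition \ref{PropTF}: the only thing requiring argument is that the defining SKT condition $dH=0$ coincides with the hypothesis $\overline{\partial}H^{2,1}=0$ of that proposition. Granting this, part 1) of Proposition \ref{PropTF} makes $\nabla=\nabla^{Ch}-\frac{1}{2}I[g^{-1}H,I]$ a metric connection with $(1,1)$ curvature---so that $\mathcal{T}$ is a complex submanifold of $\mathcal{C}$ by the earlier proposition on metric connections---and part 2) gives integrability of $\mathcal{J}^{(\nabla,I)}$ on both spaces. The identification of $\nabla$ with $\nabla^{-}=\nabla^{Levi}-\frac{1}{2}g^{-1}H$ is already recorded in the discussion preceding the statement, so I may quote it to phrase the conclusion in terms of $\nabla^{-}$.

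To establish the equivalence, I would first fix the type of $H$. Because $w$ is of type $(1,1)$, we have $\partial w$ of type $(2,1)$ and $\overline{\partial}w$ of type $(1,2)$, so from $H=i(\partial-\overline{\partial})w$ one reads off $H^{2,1}=i\partial w$ and $H^{1,2}=-i\overline{\partial}w=\overline{H^{2,1}}$; in particular $H$ is a real form of type $(1,2)+(2,1)$ as assumed in Section \ref{SecTF}.

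Next I would decompose $dH=\partial H+\overline{\partial}H$ by type. The $(3,1)$-part is $\partial H^{2,1}=i\partial^{2}w=0$, and its conjugate shows the $(1,3)$-part vanishes as well, so $dH$ reduces to its $(2,2)$-part $\overline{\partial}H^{2,1}+\partial H^{1,2}$. Using $\partial\overline{\partial}=-\overline{\partial}\partial$, I find $\partial H^{1,2}=-i\partial\overline{\partial}w=i\overline{\partial}\partial w=\overline{\partial}H^{2,1}$, whence this part equals $2\overline{\partial}H^{2,1}$. Thus $dH=0$ if and only if $\overline{\partial}H^{2,1}=0$, which is precisely what Proposition \ref{PropTF} requires, and the Corollary follows.

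The argument is short, so the real obstacle is conceptual care rather than length: one must keep the reality constraint $H^{1,2}=\overline{H^{2,1}}$ and the sign in $\partial\overline{\partial}=-\overline{\partial}\partial$ under control throughout the type bookkeeping, and---should one wish to reprove the identification $\nabla=\nabla^{-}$ rather than quote it---match the torsion normalizations of the Chern and Bismut connections, where sign conventions are easy to mishandle.
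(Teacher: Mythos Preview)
Your proposal is correct and follows the same route the paper takes: the corollary is obtained directly from Proposition~\ref{PropTF} once one knows that the SKT condition $dH=0$ is equivalent to $\overline{\partial}H^{2,1}=0$, and that the resulting connection coincides with $\nabla^{-}$. The paper simply asserts the equivalence in the sentence preceding the corollary, whereas you supply the type-by-type verification; your computation is accurate and nothing further is needed.
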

\subsubsection{Bihermitian Manifolds}
\label{SecBG}
A source of SKT manifolds is bihermitian manifolds, also known as generalized Kahler manifolds \cite{Rocek1,Apost1, Gualt1,Hitchin1}. A bihermitian manifold is by definition a Riemannian manifold $(M,g)$ that is equipped with two metric compatible complex structures $J_{+}$ and $J_{-}$ that satisfy the following conditions
\[\nabla^{+}J_{+}=0 \ \ \text{ and } \ \  \nabla^{-}J_{-}=0, \]
where $\nabla^{\pm}= \nabla^{Levi} \pm \frac{1}{2}g^{-1}H$, for a closed three form $H$.

It can be shown that $\nabla^{+}$ and $\nabla^{-}$ are the respective Bismut connections for $(g,J_{+})$ and $(g,J_{-})$. Thus an equivalent way to express the above bihermitian conditions is  
\[H=-d^{c}_{+}w_{+}=d^{c}_{-}w_{-} \ \ \text{ and } \ \ dH=0.\]
Since $dH$ is assumed to be zero, $(g,J_{+})$ and $(g,J_{-})$ are two SKT structures on $M$ and hence by Corollary \ref{CorSKTCS} we have: 
\begin{cor}
\label{CorCSBG}$\mathcal{J}^{(\nabla^{-},J_{+})}$ and $ \mathcal{J}^{(\nabla^{+},J_{-})}$ are two complex structures on $\mathcal{C}$ and $\mathcal{T}$.
\end{cor}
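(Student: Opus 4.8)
The plan is to deduce the corollary from two separate applications of Corollary \ref{CorSKTCS}, one for each of the two complex structures, with the only care needed being the sign of the SKT three form in each case. First I would treat the pair $(g,J_{+})$. The bihermitian conditions supply $H=-d^{c}_{+}w_{+}$ together with $dH=0$, which are precisely the hypotheses of Section \ref{SecSKT} exhibiting $(g,J_{+})$ as an SKT structure whose associated three form is $H$ itself. Applying Corollary \ref{CorSKTCS} with $I=J_{+}$, the relevant connection is $\nabla^{Levi}-\frac{1}{2}g^{-1}H$, which is exactly the connection denoted $\nabla^{-}$ in the bihermitian conventions. Hence $(\mathcal{C},\mathcal{J}^{(\nabla^{-},J_{+})})$ is a complex manifold and $\mathcal{T}$ a complex submanifold.

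Next I would treat $(g,J_{-})$, where the one delicate point appears. Here the bihermitian conditions give $H=d^{c}_{-}w_{-}$, equivalently $-d^{c}_{-}w_{-}=-H$, so the three form exhibiting $(g,J_{-})$ as an SKT structure in the sense of Section \ref{SecSKT} is not $H$ but $-H$. Since $dH=0$ forces $d(-H)=0$, the SKT hypotheses still hold, and applying Corollary \ref{CorSKTCS} with $I=J_{-}$ and SKT three form $-H$ yields the connection $\nabla^{Levi}-\frac{1}{2}g^{-1}(-H)=\nabla^{Levi}+\frac{1}{2}g^{-1}H=\nabla^{+}$. Therefore $(\mathcal{C},\mathcal{J}^{(\nabla^{+},J_{-})})$ is likewise a complex manifold with $\mathcal{T}$ a complex submanifold, which completes the argument.

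I expect the only genuine subtlety to be this bookkeeping of signs: the asymmetry $H=-d^{c}_{+}w_{+}=+d^{c}_{-}w_{-}$ built into the bihermitian definition forces the SKT three form to switch sign when passing from $J_{+}$ to $J_{-}$, and it is exactly this flip that turns $\nabla^{-}$ into $\nabla^{+}$ and thereby produces two genuinely distinct twistor complex structures. No further curvature or integrability computation is required, since Theorem \ref{thmINT} and Corollary \ref{CorSKTCS} already deliver integrability on $\mathcal{C}$ and the complex-submanifold statement for $\mathcal{T}$; all that remains is to confirm that each structure $\mathcal{J}^{(\nabla^{-},J_{+})}$ and $\mathcal{J}^{(\nabla^{+},J_{-})}$ is a legitimate instance of that corollary.
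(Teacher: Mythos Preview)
Your proposal is correct and follows the same approach as the paper, which simply observes that both $(g,J_{+})$ and $(g,J_{-})$ are SKT and then invokes Corollary~\ref{CorSKTCS} for each. The paper leaves the sign bookkeeping implicit, whereas you have spelled out carefully why the SKT three form for $(g,J_{-})$ is $-H$ and hence why the resulting connection is $\nabla^{+}$ rather than $\nabla^{-}$; this is the only content beyond a direct citation, and you have handled it correctly.
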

\section{Holomorphic Sections of Twistor Space}
\label{SecHST}
Given a bundle $E \longrightarrow (M,I)$, equipped with a connection $\nabla$ that has (1,1) curvature, we will presently give different characterizations of the holomorphic sections of $\pi: (\mathcal{C}(E), \mathcal{J}^{(\nabla,I)}) \longrightarrow M$ and show how these sections yield new connections on $E$ with (1,1) curvature. We will then use these results to decompose $M$ into holomorphic subvarieties in Section \ref{SecHSMAIN}. 

 The following gives a first characterization of the holomorphic sections of twistor space.
\begin{prop}
\label{PropHSC}
 The section $J:(M,I) \longrightarrow (\mathcal{C}, \mathcal{J}^{(\nabla,I)})$ is holomorphic if and only if $J\nabla_{v} J= \nabla_{Iv}J$, for all $v \in TM$. 
\end{prop}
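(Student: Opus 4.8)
The plan is to compute the condition for a section $J$ to be holomorphic directly in terms of the almost complex structure $\mathcal{J}^{(\nabla,I)}$, using the splitting $T\mathcal{C}=V\mathcal{C}\oplus H^{\nabla}\mathcal{C}$ provided by the connection $\nabla$. Recall that a section $J:(M,I)\to(\mathcal{C},\mathcal{J}^{(\nabla,I)})$ is holomorphic precisely when its differential intertwines the two complex structures, i.e. $dJ\circ I=\mathcal{J}^{(\nabla,I)}\circ dJ$ as maps $TM\to T_{J}\mathcal{C}$. So the first step is to make the left- and right-hand sides of this intertwining equation explicit by decomposing $dJ(v)$, for $v\in TM$, into its vertical and horizontal parts.

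First I would note that since $J$ is a section of $\pi$, we have $\pi\circ J=\mathrm{id}_M$, so the horizontal component of $dJ(v)$ is exactly the horizontal lift of $v$ (identified with $v\in\pi^*TM$ under the identification $H^{\nabla}\mathcal{C}\cong\pi^*TM$). The vertical component is where the connection enters: using $\nabla$ to measure the failure of $J$ to be covariantly constant, the vertical part of $dJ(v)$ is $\nabla_v J$, viewed as an element of $V_J\mathcal{C}$. (Here one uses that $V_J\mathcal{C}=\{A\in\mathrm{End}(E_{\pi(J)})\mid AJ+JA=0\}$, the tangent space to the fiber $\mathcal{C}(E)$ at $J$, and that $\nabla_v J$ indeed anticommutes with $J$ since differentiating $J^2=-1$ gives $(\nabla_v J)J+J(\nabla_v J)=0$.) Thus schematically $dJ(v)=\nabla_v J\oplus v$ in the $V\oplus H$ splitting.

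Next I would apply $\mathcal{J}^{(\nabla,I)}=\mathcal{J}^V\oplus\pi^*I$ to this, recalling that on the vertical piece the standard fiberwise complex structure $\mathcal{J}^V$ acts by post-composition with $J$, i.e. $\mathcal{J}^V(A)=JA$ for $A\in V_J\mathcal{C}$. Then the intertwining condition $dJ(Iv)=\mathcal{J}^{(\nabla,I)}dJ(v)$ splits into its horizontal and vertical components. The horizontal components read $Iv=Iv$, which is automatic, so only the vertical components carry content; comparing them yields exactly $\nabla_{Iv}J=J\,\nabla_v J$ for all $v\in TM$, which is the claimed equation.

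The main obstacle I expect to be the precise identification of the vertical component of $dJ(v)$ with $\nabla_v J$ and, relatedly, pinning down the correct sign and form of the action of $\mathcal{J}^V$ on $V_J\mathcal{C}$ (i.e. whether it is $A\mapsto JA$ or $A\mapsto -AJ=JA$, which coincide on the anticommuting subspace). This requires being careful about the conventions defining the horizontal lift and the identification $V_J\mathcal{C}\cong\{A\mid AJ+JA=0\}$, and it is the only genuinely nontrivial piece of bookkeeping; once those conventions are fixed, the equivalence falls out of comparing vertical parts. Since $\mathcal{J}^V$ and the whole computation are fiberwise at each point, integrability of $\mathcal{J}^{(\nabla,I)}$ (Theorem \ref{thmINT}) is not actually needed for the statement—only the pointwise definition of the almost complex structure enters—so I would not invoke it beyond ensuring that ``holomorphic'' is meaningful.
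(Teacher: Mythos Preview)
Your argument is correct and is the standard one: decompose $dJ(v)$ as $\nabla_v J \oplus v^{H}$ in the $V\mathcal{C}\oplus H^{\nabla}\mathcal{C}$ splitting, apply $\mathcal{J}^{(\nabla,I)}=\mathcal{J}^V\oplus\pi^*I$, and read off the vertical component of the intertwining condition. Note that the paper does not actually supply a proof here; it simply cites \cite{Gindi1} for the result. Your identification of the vertical projection of $dJ(v)$ with $\nabla_v J$ matches the convention the paper uses later (see the proof of Proposition~\ref{PropMPC}, where $P^{\nabla}(K_*v)=\nabla_v K$), and your description of $\mathcal{J}^V$ as left multiplication by $J$ on $V_J\mathcal{C}=\{A:\{A,J\}=0\}$ is consistent with the sign conventions in the rest of the paper. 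There is nothing further to compare.
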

\begin{proof}
Letting $P^{\nabla}:T\mathcal{C} \longrightarrow V\mathcal{C}$ be the projection operator that is based on the splitting of $T\mathcal{C}$ into $V\mathcal{C} \oplus H^{\nabla}\mathcal{C}$, let us consider the holomorphicity condition of $J$: $\mathcal{J}^{(\nabla,I)}J_{*}=J_{*}I$. If $v \in T_{x}M$, we then have: 

 1) $\mathcal{J}^{(\nabla,I)}J_{*}v= \mathcal{J}^{(\nabla,I)}(P^{\nabla}(J_{*}v) + v^{\nabla})$, where $v^{\nabla} \in H^{\nabla}_{J(x)}\mathcal{C}$ is the horizontal lift of $v \in T_{x}M$. This then equals $JP^{\nabla}(J_{*}v) + (Iv)^{\nabla}$.
 
2) $J_{*}(Iv)= P^{\nabla}(J_{*}Iv) + (Iv)^{\nabla}$.

Hence $J$ is holomorphic if and only if 
\begin{equation} 
\label{EqH}
JP^{\nabla}(J_{*}v)= P^{\nabla}(J_{*}Iv), 
\end{equation}
 for all $v\in TM$. Using \cite{Gindi1}, it is straightforward to show that $P^{\nabla}(J_{*}v)=\nabla_{v}J$. Plugging this into Equation \ref{EqH} proves Proposition \ref{PropHSC}.  
\end{proof}

If we consider the $\overline{\partial}-$operator $\nabla^{0,1}$ on $E_{\mathbb{C}}$, as described in Lemma \ref{LemCOR}, then the above holomorphicity condition is equivalent to $(\nabla^{0,1}J)E^{0,1}_{J}=0.$ This in turn is equivalent to $ J\nabla^{0,1}e=-i\nabla^{0,1}e,$ for all $e \in \Gamma(E^{0,1}_{J})$. We thus have: 
\begin{prop}
\label{PropDBAR}
The section $J:M \longrightarrow (\mathcal{C}, \mathcal{J}^{(\nabla,I)})$ is holomorphic if and only if $E^{0,1}_{J}$ is a holomorphic subbundle of $(E_{\mathbb{C}},\nabla^{0,1})$.
\end{prop}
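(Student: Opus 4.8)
The plan is to begin from the characterization already established in Proposition \ref{PropHSC}---that $J$ is holomorphic if and only if $J\nabla_v J = \nabla_{Iv}J$ for all $v \in TM$---and to translate this into the $\nabla^{0,1}$-invariance of $E^{0,1}_J$. The working definition of a holomorphic subbundle of $(E_{\mathbb{C}},\nabla^{0,1})$ that I would use is that $\nabla^{0,1}$ preserves $\Gamma(E^{0,1}_J)$, i.e. $\nabla^{0,1}e \in \Gamma(T^{*0,1}\otimes E^{0,1}_J)$ for every $e \in \Gamma(E^{0,1}_J)$; since $E^{0,1}_J$ is the $(-i)$-eigenbundle of $J$, this is the same as the pointwise requirement $J\nabla^{0,1}e = -i\nabla^{0,1}e$. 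So the goal is to show that the identity of Proposition \ref{PropHSC} is equivalent to this eigenvalue statement, passing through the intermediate condition $(\nabla^{0,1}J)E^{0,1}_J=0$.

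First I would record the algebraic fact that, since $J^2=-1$, differentiation gives $J(\nabla_v J) = -(\nabla_v J)J$ for every $v$; hence $\nabla_v J$ anticommutes with $J$ and therefore interchanges the eigenbundles $E^{1,0}_J$ and $E^{0,1}_J$. Next I would complexify the identity of Proposition \ref{PropHSC} and evaluate it on $v \in T^{0,1}M$, where $Iv=-iv$, so that it becomes $J(\nabla^{0,1}J) = -i\,\nabla^{0,1}J$; the $T^{1,0}M$ component is the complex conjugate of this (as $J$ and $\nabla$ are real, conjugation exchanges types) and carries no new information. Combining the displayed identity with the anticommutation relation, I would apply $\nabla^{0,1}J$ to an element $e \in E^{0,1}_J$: anticommutation yields $J(\nabla^{0,1}J)e = i\,(\nabla^{0,1}J)e$, while holomorphicity forces $J(\nabla^{0,1}J)e = -i\,(\nabla^{0,1}J)e$, so the two are compatible precisely when $(\nabla^{0,1}J)e=0$. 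On the complementary bundle $E^{1,0}_J$ the identity holds automatically, because $(\nabla^{0,1}J)$ already lands in $E^{0,1}_J$ there. This establishes the equivalence of the condition of Proposition \ref{PropHSC} with $(\nabla^{0,1}J)E^{0,1}_J=0$.

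Finally I would transfer $(\nabla^{0,1}J)E^{0,1}_J=0$ to the subbundle statement via the Leibniz rule for the endomorphism $J$: for $e \in \Gamma(E^{0,1}_J)$ we have $Je=-ie$, and applying $\nabla^{0,1}$ gives $(\nabla^{0,1}J)e + J\nabla^{0,1}e = -i\,\nabla^{0,1}e$. Hence $(\nabla^{0,1}J)e=0$ if and only if $J\nabla^{0,1}e = -i\,\nabla^{0,1}e$, which by the opening paragraph is exactly the assertion that $\nabla^{0,1}$ preserves $\Gamma(E^{0,1}_J)$, i.e. that $E^{0,1}_J$ is a holomorphic subbundle of $(E_{\mathbb{C}},\nabla^{0,1})$. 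Throughout one uses that $\nabla^{0,1}$ is genuinely a $\overline{\partial}$-operator, which is guaranteed by Lemma \ref{LemCOR} since $R^{\nabla}$ is (1,1).

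I expect the only delicate point to be the type decomposition combined with the anticommutation relation in the second paragraph---tracking which eigenbundle $(\nabla^{0,1}J)$ maps into, and verifying that the $T^{1,0}M$ component of the Proposition \ref{PropHSC} identity is automatic rather than an extra constraint. The remaining steps are formal applications of the Leibniz rule and the definition of a holomorphic subbundle.
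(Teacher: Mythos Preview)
Your proposal is correct and follows exactly the route the paper takes: starting from Proposition~\ref{PropHSC}, passing through the intermediate condition $(\nabla^{0,1}J)E^{0,1}_{J}=0$, and then using the Leibniz rule to obtain $J\nabla^{0,1}e=-i\nabla^{0,1}e$ for $e\in\Gamma(E^{0,1}_{J})$. The paper states these two equivalences in a single sentence without justification, whereas you have supplied the anticommutation and type-decomposition details that make them transparent.
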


Let us now use a holomorphic section to build other connections on $E$ with (1,1) curvature.

\begin{prop}
Let $J:M \longrightarrow (\mathcal{C}, \mathcal{J}^{(\nabla,I)})$ be a holomorphic section. $\nabla +\nabla J(a+bJ)$, where $a,b \in \mathbb{R}$, is a connection on $E$ with (1,1) curvature.
\end{prop}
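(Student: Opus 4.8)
The plan is to reduce the $(1,1)$-curvature statement to the vanishing of the square of a $\overline{\partial}$-operator and then exploit the block structure that the holomorphicity of $J$ forces on $\nabla J$. First I would observe that, since the difference of two connections is tensorial, $\nabla':=\nabla+\nabla J(a+bJ)$ is indeed a connection on $E$: the correction term $A:=\nabla J(a+bJ)$ is a genuine real $\mathrm{End}(E)$-valued $1$-form, because $\nabla J$ is the covariant derivative of the real section $J$ of $\mathrm{End}(E)$ and $a+bJ$ is a real endomorphism. By Lemma \ref{LemCOR} it then suffices to prove $(\nabla'^{0,1})^{2}=0$. Writing $\nabla'^{0,1}=\nabla^{0,1}+A^{0,1}$ and expanding,
\[
(\nabla'^{0,1})^{2}=(\nabla^{0,1})^{2}+\overline{\partial}^{\nabla}A^{0,1}+A^{0,1}\wedge A^{0,1},
\]
where the first term vanishes by Lemma \ref{LemCOR} applied to $\nabla$. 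So the whole problem reduces to showing that the two remaining terms vanish.

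The key structural input is the behaviour of $\nabla_{\overline{v}}J$ along antiholomorphic directions. Decompose $E_{\mathbb{C}}=E^{1,0}_{J}\oplus E^{0,1}_{J}$ into the $\pm i$-eigenbundles of $J$. For $\overline{v}\in T^{0,1}M$ the holomorphicity condition of Proposition \ref{PropHSC}, namely $J\nabla_{\overline{v}}J=\nabla_{I\overline{v}}J=-i\,\nabla_{\overline{v}}J$, shows that $\nabla_{\overline{v}}J$ has image in $E^{0,1}_{J}$; combined with $(\nabla^{0,1}J)E^{0,1}_{J}=0$ (the reformulation of holomorphicity recorded before Proposition \ref{PropDBAR}), it follows that $\nabla_{\overline{v}}J$ annihilates $E^{0,1}_{J}$ and maps $E^{1,0}_{J}$ into $E^{0,1}_{J}$. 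In other words each $\nabla_{\overline{v}}J$ is strictly lower triangular, hence nilpotent, with respect to this splitting. The same is then true of $A^{0,1}_{\overline{v}}=(\nabla_{\overline{v}}J)(a+bJ)$, since $a+bJ$ preserves the eigenbundles. Consequently every composition $A^{0,1}_{\overline{v}}A^{0,1}_{\overline{w}}$ vanishes, so $A^{0,1}\wedge A^{0,1}=0$.

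It remains to treat $\overline{\partial}^{\nabla}A^{0,1}=(d^{\nabla}A)^{0,2}$. Here I would apply the Leibniz rule for the induced connection on $\mathrm{End}(E)$ together with the Ricci identity $d^{\nabla}(\nabla J)=[R^{\nabla},J]$, which gives
\[
d^{\nabla}A=[R^{\nabla},J]\,(a+bJ)-b\,\nabla J\wedge\nabla J.
\]
Taking $(0,2)$-parts kills both terms: the first because $R^{\nabla}$ is $(1,1)$, so $[R^{\nabla},J]^{0,2}=[(R^{\nabla})^{0,2},J]=0$, and the second because $(\nabla J\wedge\nabla J)^{0,2}$ evaluated on $\overline{v},\overline{w}\in T^{0,1}M$ is a difference of products of the strictly-lower-triangular operators $\nabla_{\overline{v}}J$, which vanish by the same nilpotency. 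Hence $(\nabla'^{0,1})^{2}=0$ and $\nabla'$ has $(1,1)$ curvature. I expect the main obstacle to be the careful verification of the triangularity claim for $\nabla_{\overline{v}}J$ and the consistent bookkeeping of signs and of the $(0,2)$-projection in the Leibniz computation; everything else is formal once that structure is in place.
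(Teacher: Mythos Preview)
Your argument is correct. The paper itself does not prove this proposition, deferring instead to \cite{Gindi1}, so there is no in-paper proof to compare against. Your route---reducing via Lemma \ref{LemCOR} to $(\nabla'^{0,1})^{2}=0$ and then using that holomorphicity forces each $\nabla_{\overline{v}}J$ to be strictly lower triangular with respect to $E^{1,0}_{J}\oplus E^{0,1}_{J}$---is exactly the one suggested by the discussion preceding Proposition \ref{PropDBAR}. The triangularity claim is sound (the eigenvalue condition $J(\nabla_{\overline{v}}J)=-i\,\nabla_{\overline{v}}J$ gives the image, and combined with $\{J,\nabla J\}=0$ it forces the kernel), and the Leibniz/Ricci treatment of $\overline{\partial}^{\nabla}A^{0,1}$ together with the $(0,2)$-projection bookkeeping is correct.
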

\begin{proof}  We will show that $\nabla^{0,1} +\nabla^{0,1} J(a+bJ)$   is a $\overline{\partial}$-operator on $E_{\mathbb{C}}$. It is straightforward to show that this (0,1) connection is of the form $\nabla^{0,1} + A$, where $A \in \Gamma(T^{*0,1} \otimes EndE_{\mathbb{C}})$ satisfies $\nabla^{0,1}A=0$, $AE^{1,0}_{J} \subset E^{0,1}_{J}$ and $AE^{0,1}_{J}=0.$ It then follows that $\nabla^{0,1} + A$ squares to zero. 
\end{proof}
Among the above connections, there is a particular one that we wish to focus on:
\begin{prop}
\label{PropNAB'}
$\nabla':=\nabla +\frac{1}{2}(\nabla J)J$ is a connection on $E$ with (1,1) curvature and  satisfies $\nabla^{'}J=0$.
\end{prop}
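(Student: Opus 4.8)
The plan is to treat the two assertions separately, observing first that $\nabla'$ is automatically a connection: it differs from $\nabla$ by the tensorial $\mathrm{End}(E)$-valued one-form $\tfrac12(\nabla J)J$, so adding it to a connection again yields one. It then remains to verify $\nabla'J=0$ and that $R^{\nabla'}$ is $(1,1)$. A useful guiding remark, which I would keep in mind throughout, is that the identity $\nabla'J=0$ turns out to be purely algebraic, requiring only $J^{2}=-1$, whereas the $(1,1)$-type of the curvature is exactly where the holomorphicity of the section enters.

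For $\nabla'J=0$ I would write $B_{v}=\tfrac12(\nabla_{v}J)J$ and use that the induced connection on $\mathrm{End}(E)$ acts by commutators, giving $(\nabla'_{v}J)=(\nabla_{v}J)+[B_{v},J]$. Differentiating $J^{2}=-1$ yields $(\nabla_{v}J)J+J(\nabla_{v}J)=0$, i.e. $\nabla_{v}J$ anticommutes with $J$; substituting this into the commutator collapses the expression to $\tfrac12\bigl((\nabla_{v}J)-J(\nabla_{v}J)J\bigr)$, and the same anticommutation relation forces $J(\nabla_{v}J)J=\nabla_{v}J$, so the result vanishes. No integrability or holomorphicity is used at this stage.

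For the curvature I would invoke Lemma \ref{LemCOR} and reduce the claim to $((\nabla')^{0,1})^{2}=0$. The key input from holomorphicity, via Proposition \ref{PropHSC}, is that for $\bar v\in T^{0,1}$ one has $J(\nabla_{\bar v}J)=-i(\nabla_{\bar v}J)$; hence the image of $\nabla_{\bar v}J$ lies in $E^{0,1}_{J}$, and moreover $\nabla_{\bar v}J$ annihilates $E^{0,1}_{J}$ (this is the content of Proposition \ref{PropDBAR}, that $E^{0,1}_{J}$ is $\nabla^{0,1}$-holomorphic). Since the correction term is $\tfrac12(\nabla_{\bar v}J)(J\,\cdot\,)$, its image lies in $E^{0,1}_{J}$, so $(\nabla')^{0,1}$ and $\nabla^{0,1}$ agree modulo $E^{0,1}_{J}$; on $E^{0,1}_{J}$ itself the correction vanishes, so they agree exactly there. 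Combined with $\nabla'J=0$, which forces $(\nabla')^{0,1}$ to preserve the splitting $E_{\mathbb C}=E^{1,0}_{J}\oplus E^{0,1}_{J}$, this exhibits $(\nabla')^{0,1}$ as block diagonal: it restricts on the subbundle $E^{0,1}_{J}$ to $\nabla^{0,1}$ and descends on the quotient $E_{\mathbb C}/E^{0,1}_{J}\cong E^{1,0}_{J}$ to the induced operator. Both the restricted and the quotient operators square to zero because $(\nabla^{0,1})^{2}=0$ (as $R^{\nabla}$ is $(1,1)$), whence $((\nabla')^{0,1})^{2}=0$ as desired.

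I expect the main obstacle to be the $E^{1,0}_{J}$ block of the curvature computation: holomorphicity directly controls only the subbundle $E^{0,1}_{J}$, and reality of the real connection $\nabla'$ only relates the $(0,2)$-curvature block on $E^{0,1}_{J}$ to the $(2,0)$-curvature block on $E^{1,0}_{J}$, rather than to the $(0,2)$-block on $E^{1,0}_{J}$ that one actually needs. The point that unlocks it is the image condition, that $\nabla_{\bar v}J$ takes values in $E^{0,1}_{J}$; this identifies the operator induced by $(\nabla')^{0,1}$ on the holomorphic quotient with the standard one, and thereby delivers integrability of the remaining block.
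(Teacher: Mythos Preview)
The paper does not actually supply a proof of Proposition~\ref{PropNAB'}; it is stated without argument, as a special case of the preceding family $\nabla+\nabla J(a+bJ)$ taken from \cite{Gindi1}. Your proposal is correct and self-contained. The computation of $\nabla'J=0$ is the standard algebraic one, and your curvature argument is sound: holomorphicity of $J$ gives both that $(\nabla_{\bar v}J)$ annihilates $E^{0,1}_{J}$ and that its image lies in $E^{0,1}_{J}$, so $(\nabla')^{0,1}$ is block-diagonal with respect to $E^{1,0}_{J}\oplus E^{0,1}_{J}$, agreeing with $\nabla^{0,1}$ on $E^{0,1}_{J}$ and with the quotient $\overline\partial$-operator on $E_{\mathbb C}/E^{0,1}_{J}\cong E^{1,0}_{J}$; since $(\nabla^{0,1})^{2}=0$, both blocks square to zero. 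The point you flag in your final paragraph---that it is the \emph{image} condition on $\nabla_{\bar v}J$ which makes the $E^{1,0}_{J}$ block work---is precisely the place where holomorphicity (rather than mere reality of $\nabla'$) is used, and you have handled it correctly.
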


We will now give some examples of holomorphic sections of twistor spaces.

\subsection{Example: Three Forms}
\label{ExTF2}
As in Section \ref{SecTF}, let $(M,g,I)$ be a Hermitian manifold that is equipped with a real three form $H= \overline{H^{2,1}} +H^{2,1}$ such that $\overline{\partial}H^{2,1}=0$. Recalling Proposition \ref{PropTF}, we have 
\begin{prop} $I: M \longrightarrow (\mathcal{T}(TM), \mathcal{J}^{(\nabla,I)})$ is a holomorphic section, where $\nabla= \nabla^{Ch} + \frac{1}{2}I[g^{-1}H,I]$.
\end{prop}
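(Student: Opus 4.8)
The plan is to realize the section as the constant section with value $I$ and to verify holomorphicity through the $\overline{\partial}$-operator criterion of Proposition \ref{PropDBAR}, which is considerably cleaner here than the pointwise identity of Proposition \ref{PropHSC}. First I would note that the section is well defined into $\mathcal{T}(TM)$ and not merely $\mathcal{C}(TM)$: since $(M,g,I)$ is Hermitian, $I$ is $g$-orthogonal, so $I \in \mathcal{T}(TM)$ at every point. Because $\mathcal{T}$ is a complex submanifold of $(\mathcal{C},\mathcal{J}^{(\nabla,I)})$, holomorphicity of $I$ as a section of $\mathcal{T}$ is equivalent to its holomorphicity as a section of $\mathcal{C}$, so it suffices to check the latter.

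Next, recall from Lemma \ref{LemCOR} and Section \ref{SecTF} that the $\overline{\partial}$-operator associated to $\nabla$ is $\nabla^{0,1}= \nabla^{Ch(0,1)} + g^{-1}H^{2,1}$ on $TM_{\mathbb{C}}$. For the section $J=I$ the $(-i)$-eigenbundle $E^{0,1}_{J}$ is exactly $T^{0,1}$. By Proposition \ref{PropDBAR} it therefore suffices to show that $T^{0,1}$ is preserved by $\nabla^{0,1}$, i.e. that $\nabla^{0,1}_{v}e \in \Gamma(T^{0,1})$ for all $v \in T^{0,1}$ and $e \in \Gamma(T^{0,1})$. I would check the two summands of $\nabla^{0,1}$ separately.

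The Chern part is immediate: $\nabla^{Ch}$ preserves $I$, hence preserves the splitting $TM_{\mathbb{C}}=T^{1,0}\oplus T^{0,1}$, so $\nabla^{Ch(0,1)}_{v}e \in \Gamma(T^{0,1})$. The crux is the torsion term $g^{-1}H^{2,1}$. Here I would use the type decomposition: by the convention of Section \ref{SecTF}, $(g^{-1}H^{2,1})_{v}e = g^{-1}\bigl(H^{2,1}(v,e,\cdot)\bigr)$, and since $H^{2,1}$ has type $(2,1)$ it has a single anti-holomorphic slot. Feeding in the two $(0,1)$ vectors $v$ and $e$ therefore forces at least one of them into a $(1,0)$ slot, so $H^{2,1}(v,e,\cdot)=0$. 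Thus the torsion term annihilates $T^{0,1}$ in $(0,1)$-directions, and the two computations combine to give $\nabla^{0,1}_{v}e \in \Gamma(T^{0,1})$, which is precisely the holomorphicity of $T^{0,1}$ as a subbundle. The main, and essentially only, obstacle is this purely algebraic type count on $H^{2,1}$; once it is in place, the statement follows directly from Proposition \ref{PropDBAR}.
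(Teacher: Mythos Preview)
Your argument is correct. It differs from the paper's proof in which characterization of holomorphicity is invoked: the paper uses Proposition~\ref{PropHSC} directly, computing $\nabla_{v}I=[g^{-1}H_{v},I]$ (since $\nabla^{Ch}I=0$) and then observing that the condition $I[g^{-1}H_{v},I]=[g^{-1}H_{Iv},I]$ is precisely the statement that $H$ has type $(1,2)+(2,1)$. You instead use Proposition~\ref{PropDBAR} and check that $T^{0,1}$ is $\nabla^{0,1}$-closed, which reduces to the same type constraint on $H$ via the vanishing $H^{2,1}(v,e,\cdot)=0$ for $v,e\in T^{0,1}$. Both routes are short and boil down to the $(1,2)+(2,1)$ type of $H$; your $\overline{\partial}$-operator formulation has the minor advantage that the torsion contribution is seen to vanish outright on $T^{0,1}$ (not merely to preserve it), while the paper's one-line commutator identity makes the equivalence with the type condition on $H$ slightly more explicit as a real pointwise statement.
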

\begin{proof}
The holomorphicity condition is $I[g^{-1}H_{v},I]=[g^{-1}H_{Iv},I]$ for all $v\in TM$. This is equivalent to the condition that $H$ is (1,2) + (2,1).
\end{proof}
In this case, $\nabla'= \nabla +\frac{1}{2}(\nabla I)I$ is a familiar connection: $\nabla^{Ch}$.

As explained in Section \ref{SecSKT}, an SKT manifold falls into the above setup. Using Corollary \ref{CorSKTCS}, we thus have:
\begin{cor}
\label{CorSKTHI}
Let $(M,g,I)$ be an SKT manifold. $I:M \longrightarrow (\mathcal{T}, \mathcal{J}^{(\nabla^{-},I)})$ is holomorphic. 
\end{cor}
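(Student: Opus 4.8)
The plan is to deduce the corollary from the corresponding statement for general three forms (the Proposition of Section~\ref{ExTF2}) together with the holomorphicity criterion of Proposition~\ref{PropHSC}, so that essentially no new computation is required. First I would observe that Corollary~\ref{CorSKTCS} already guarantees that $\mathcal{J}^{(\nabla^{-},I)}$ is an honest complex structure on $\mathcal{C}$ (and $\mathcal{T}$ a complex submanifold), so that the phrase ``$I$ is a holomorphic section'' is meaningful; note also that $I\in\mathcal{T}(TM)$ since $g$ is $I$-Hermitian, so $I$ is genuinely a section of $\mathcal{T}$. It then remains only to verify the holomorphicity condition for the section $J=I$.

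Next I would record that the SKT data fits the three-form framework. The form $H=i(\partial-\overline{\partial})w$ is real and, since $w$ is of type $(1,1)$, is automatically of type $(1,2)+(2,1)$ with $H^{2,1}=i\partial w$; and the SKT condition $dH=0$ is, as noted in Section~\ref{SecSKT}, equivalent to $\overline{\partial}H^{2,1}=0$. The latter is what feeds Corollary~\ref{CorSKTCS}, while the former is what will make the section holomorphic.

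Then I would apply the criterion of Proposition~\ref{PropHSC} to $J=I$ and the connection $\nabla^{-}=\nabla^{Ch}-\tfrac12 I[g^{-1}H,I]$: the section is holomorphic iff $I\nabla^{-}_{v}I=\nabla^{-}_{Iv}I$ for all $v\in TM$. Writing $\nabla^{-}=\nabla^{Ch}+A$ with $A_{v}=-\tfrac12 I[g^{-1}H_{v},I]$ and using $\nabla^{Ch}I=0$, the induced derivative is $\nabla^{-}_{v}I=[A_{v},I]$, which a short bracket manipulation reduces to a scalar multiple of $[g^{-1}H_{v},I]$. Hence the condition collapses to
\begin{equation*}
I[g^{-1}H_{v},I]=[g^{-1}H_{Iv},I]\qquad\text{for all }v\in TM,
\end{equation*}
which is exactly the statement established in the proof of the Proposition of Section~\ref{ExTF2} to be equivalent to $H$ being of type $(1,2)+(2,1)$---already verified above.

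The one point to watch is the sign discrepancy between $\nabla^{-}$ and the connection $\nabla^{Ch}+\tfrac12 I[g^{-1}H,I]$ treated in Section~\ref{ExTF2}. I expect it to be harmless, because $\nabla_{v}I=[A_{v},I]$ is \emph{linear} in the torsion term $A_{v}$, so the reduced holomorphicity condition is identical for either sign. The cleanest way to phrase this is simply to apply the Proposition of Section~\ref{ExTF2} to the three form $-H$, which again satisfies $\overline{\partial}(-H)^{2,1}=0$ and whose associated connection is precisely $\nabla^{Ch}-\tfrac12 I[g^{-1}H,I]=\nabla^{-}$; the corollary then follows immediately.
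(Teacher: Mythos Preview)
Your proposal is correct and follows the same route as the paper: it simply specializes the three-form Proposition of Section~\ref{ExTF2} to the SKT case, the only content being that the SKT data satisfies the hypotheses of that proposition. Your extra care in resolving the sign discrepancy between $\nabla^{Ch}+\tfrac12 I[g^{-1}H,I]$ and $\nabla^{-}=\nabla^{Ch}-\tfrac12 I[g^{-1}H,I]$ (via linearity of $\nabla_v I=[A_v,I]$ in $A$, or equivalently by applying the proposition to $-H$) is a detail the paper leaves implicit.
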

Consequently, the holomorphic twistor spaces of a bihermitian manifold,
as described in Section \ref{SecBG}, admit holomorphic sections. They will be considered in detail in Section \ref{SecBM}.

We will now provide another example where the base manifold is a holomorphic twistor space itself. 

\subsection{Example: Holomorphic Twistors}

\label{ExTHS}
Let $E \longrightarrow (M,I)$ be equipped with a connection $ \nabla$ that has (1,1) curvature.  Denoting the projection map from $\mathcal{C}(E)$ to $M$ by $\pi$, in this example we will be focusing on the complex manifold $(\mathcal{C}(E),\mathcal{I})$, where $\mathcal{I}=\mathcal{J}^{(\nabla,I)}$, along with its pullback  bundle $\pi^{*}E$. Since $\pi$ is holomorphic, the connection $\pi^{*}\nabla$ on $\pi^{*}E$ has (1,1) curvature so that by Theorem \ref{thmINT} the total space of $(\mathcal{C}(\pi^*E),\mathcal{J}^{(\pi^{*}\nabla, \mathcal{I})}) \longrightarrow \mathcal{C}(E)$ is a complex manifold. 
Moreover, the section $\phi$ of $\mathcal{C}(\pi^*E)$ defined by $\phi|_{K}=K$ is holomorphic: 
\begin{prop}
\label{PropTHS}
$\phi: \mathcal{C}(E) \longrightarrow (\mathcal{C}(\pi^*E),\mathcal{J}^{(\pi^{*}\nabla,\mathcal{I})}) $ is a holomorphic section.\end{prop}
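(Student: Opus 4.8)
The plan is to verify the holomorphicity criterion of Proposition \ref{PropHSC}, now applied to the section $\phi$ of $\mathcal{C}(\pi^*E)$ over the complex base $(\mathcal{C}(E),\mathcal{I})$ equipped with the connection $\pi^*\nabla$. Concretely, writing $\phi$ as the tautological $\mathrm{End}(\pi^*E)$-valued field $\phi|_{K}=K$ and regarding $\pi^*\nabla$ as the induced connection on $\mathrm{End}(\pi^*E)$, I must show
\[
\phi\,(\pi^*\nabla)_{V}\phi=(\pi^*\nabla)_{\mathcal{I}V}\,\phi\qquad\text{for all }V\in T\mathcal{C}(E).
\]
Both sides are $\mathbb{R}$-linear (indeed tensorial) in $V$, so it suffices to verify the identity separately on the two summands of the splitting $T\mathcal{C}(E)=V\mathcal{C}(E)\oplus H^{\nabla}\mathcal{C}(E)$ furnished by $\nabla$, using that $\mathcal{I}=\mathcal{J}^{V}\oplus\pi^{*}I$ preserves this splitting.

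First I would handle the horizontal directions. Fix $K_{0}\in\mathcal{C}(E)$ over $x_{0}=\pi(K_{0})$ and let $V=v^{H}$ be the horizontal lift of $v\in T_{x_{0}}M$. The horizontal curve through $K_{0}$ is $K(t)=\tau_{t}K_{0}\tau_{t}^{-1}$, where $\tau_{t}$ is $\nabla$-parallel transport along a curve $\gamma$ with $\dot\gamma(0)=v$; along such a lift the pullback connection $\pi^*\nabla$ reduces to $\nabla$ along $\gamma$ under the canonical identification $(\pi^*E)_{K(t)}=E_{\gamma(t)}$. Since $\phi|_{K(t)}=K(t)$ is, by construction, a $\nabla$-parallel endomorphism field along $\gamma$, I get $(\pi^*\nabla)_{v^{H}}\phi=0$; as $\mathcal{I}v^{H}=(Iv)^{H}$ is again horizontal, both sides of the identity vanish.

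Next I would treat the vertical directions. Over the fibre $\mathcal{C}(E_{x_{0}})$ the bundle $\pi^*E$ is canonically the constant bundle $E_{x_{0}}$ and $\pi^*\nabla$ restricts to the trivial connection (pullbacks of sections of $E$ are parallel in the fibre directions, since $\pi_{*}$ annihilates vertical vectors), so covariant differentiation along the fibre is ordinary differentiation in this trivialisation. A vertical vector $A\in T_{K_{0}}\mathcal{C}(E_{x_{0}})$ is an endomorphism with $AK_{0}+K_{0}A=0$, and differentiating the tautological assignment $K\mapsto K$ gives $(\pi^*\nabla)_{A}\phi=A$. Recalling that the fibrewise complex structure is left multiplication, $\mathcal{J}^{V}A=K_{0}A$ (the convention forced by the very derivation of Proposition \ref{PropHSC}, where $\mathcal{J}^{V}(\nabla_{v}J)=J\nabla_{v}J$), the identity becomes $\phi\cdot A=K_{0}A=\mathcal{J}^{V}A=(\pi^*\nabla)_{\mathcal{I}A}\phi$, which holds.

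The main point to get right---and the only genuinely nontrivial step---is the precise behaviour of $\pi^*\nabla$ in the two types of directions: that it is \emph{flat along the fibres} (so that the vertical derivative of $\phi$ is the tautological endomorphism $A$) and that \emph{along horizontal lifts it coincides with} $\nabla$ on the base (so that $\phi$, being a parallel-transported endomorphism, has vanishing horizontal derivative). Once these two identifications are in place, the horizontal identity is $0=0$ and the vertical identity reduces to the tautology $K_{0}A=K_{0}A$; keeping the left-multiplication convention for $\mathcal{J}^{V}$ consistent is exactly what makes the vertical equation close.
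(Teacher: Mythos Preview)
Your argument is correct: reducing via Proposition~\ref{PropHSC} to the identity $\phi\,(\pi^*\nabla)_{V}\phi=(\pi^*\nabla)_{\mathcal{I}V}\phi$ and then verifying it separately on horizontal and vertical vectors is exactly the natural approach, and each step holds as stated (in particular, your care with the left-multiplication convention for $\mathcal{J}^{V}$ is well placed, since that is precisely what makes Proposition~\ref{PropHSC} come out as $J\nabla_{v}J=\nabla_{Iv}J$). Note, however, that the present paper does not actually supply a proof of this proposition---it only records the statement and defers to \cite{Gindi1}---so there is no in-paper argument to compare yours against; your write-up would serve perfectly well as the missing proof.
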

\begin{proof}
It follows from Proposition \ref{PropHSC} that $\phi$ is holomorphic if and only if $\phi (\pi^{*}\nabla)_{X}\phi = \pi^{*}\nabla_{\mathcal{I}X}\phi$, for all $X \in T\mathcal{C}(E).$ Using \cite{Gindi1}, this is equivalent to $\phi P^{\nabla}(X)= P^{\nabla}(\mathcal{I}X),$ where $P^{\nabla}: T\mathcal{C}(E) \longrightarrow V\mathcal{C}(E)$ is the vertical projection operator that is induced by $\nabla$. This last expression follows directly from the definition of $\mathcal{I}=\mathcal{J}^{(\nabla,I)}$.
\end{proof}
It then follows from Proposition \ref{PropNAB'} that the connection $\pi^{*}\nabla'= \pi^{*}\nabla + \frac{1}{2} (\pi^{*}\nabla \phi)\phi$ on $\pi^{*}E \longrightarrow (\mathcal{C}(E),\mathcal{I})$ also has (1,1) curvature and satisfies $\pi^{*}\nabla' \phi=0$. Hence 
$\mathcal{J}^{(\pi^{*}\nabla',\mathcal{I})}$ is another complex structure on $\mathcal{C}(\pi^*E)$. 
 
\section{Holomorphic Subvarieties}
\label{SecHSMAIN}
We will now use holomorphic sections of $(\mathcal{C}(E),\mathcal{J}^{(\nabla,I)})$, as well as the corresponding connections with (1,1) curvature that were described above, to decompose $(M,I)$ into different types of holomorphic subvarieties.  

\subsection{The $M_{\leq s}$ and $M_{(\leq r, \pm)}$}
\label{SecTHS}
To begin, consider $E \longrightarrow (M,I)$ equipped with a connection $\nabla$ that has (1,1) curvature and suppose that $J$ and $K$ are respectively parallel and holomorphic sections of $(\mathcal{C}(E),\mathcal{J}^{(\nabla,I)})$---so that $\nabla J=0$ and $K\nabla K=\nabla_{I}K$. Our main theorem of this section is that the degeneracy loci of the real bundle maps $[J,K]$, $J+K$ and $J-K$ are holomorphic subvarieties of $M$:
\begin{thm}
\label{ThmPHS}
Let $J$ and $K$ respectively be parallel and holomorphic sections of $(\mathcal{C},\mathcal{J}^{(\nabla,I)})\longrightarrow (M,I)$. The following are holomorphic subvarieties of $M$:
\begin{align*}
&1)\ M_{\leq s}=\{x\in M | \ Rank[J,K]|_{x} \leq 2s \} \\ 
&2)\ M_{(\leq r, \pm)}=\{x\in M | \ Rank(J \pm K)|_{x} \leq 2r \}.
\end{align*}
\end{thm}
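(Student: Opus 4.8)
The plan is to complexify and reduce each of the three degeneracy loci to the degeneracy locus of a genuinely holomorphic bundle map between holomorphic vector bundles, using that the rank of a real endomorphism equals the complex rank of its complexification, and that the locus where a holomorphic bundle map drops rank—together with finite unions and intersections of such loci—is a holomorphic subvariety (locally cut out by minors, which are holomorphic). By Lemma \ref{LemCOR} the $(1,1)$ curvature of $\nabla$ makes $(E_{\mathbb{C}},\nabla^{0,1})$ a holomorphic bundle. Since $J$ is parallel, $\nabla^{0,1}J=0$, so \emph{both} eigenbundles $E^{1,0}_J$ and $E^{0,1}_J$ are $\nabla^{0,1}$-invariant and hence holomorphic subbundles of $E_{\mathbb{C}}$; since $K$ is holomorphic, Proposition \ref{PropDBAR} gives that $E^{0,1}_K$ is a holomorphic subbundle. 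Consequently $Q:=E_{\mathbb{C}}/E^{0,1}_K$ is a holomorphic quotient bundle, and I shall use it as a proxy for the non-holomorphic complement $E^{1,0}_K$.

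First I would treat $J\pm K$. On the splitting $E_{\mathbb{C}}=E^{0,1}_J\oplus E^{1,0}_J$ a short eigenvalue computation shows that $(J-K)$ sends $E^{0,1}_J$ into $E^{1,0}_K$ via $-2i$ times the projection $P^{1,0}_K$ along $E^{0,1}_K$, while $(J+K)$ sends $E^{1,0}_J$ into $E^{1,0}_K$ similarly; complex conjugation makes the two halves have equal rank, so $\operatorname{Rank}(J-K)=2\operatorname{rank}(P^{1,0}_K|_{E^{0,1}_J})$ and $\operatorname{Rank}(J+K)=2\operatorname{rank}(P^{1,0}_K|_{E^{1,0}_J})$. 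These ranks are exactly those of the holomorphic composites $\psi_-:E^{0,1}_J\hookrightarrow E_{\mathbb{C}}\twoheadrightarrow Q$ and $\psi_+:E^{1,0}_J\hookrightarrow E_{\mathbb{C}}\twoheadrightarrow Q$ (inclusion of a holomorphic subbundle followed by projection to a holomorphic quotient), since projecting to $Q$ has the same rank as projecting onto $E^{1,0}_K$ along $E^{0,1}_K$. Hence $M_{(\le r,\pm)}=\{x:\operatorname{rank}\psi_\pm|_x\le r\}$ are degeneracy loci of holomorphic bundle maps, so they are holomorphic subvarieties.

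The main obstacle will be the commutator. Here the naive restriction is \emph{not} a holomorphic bundle map: using $\nabla J=0$ one finds $\nabla^{0,1}[J,K]=[J,\nabla^{0,1}K]$, which need not annihilate $E^{0,1}_J$. I would therefore work with its rank rather than its holomorphicity. From $J[J,K]=-[J,K]J$ the map $[J,K]$ interchanges $E^{0,1}_J$ and $E^{1,0}_J$, so $\operatorname{Rank}[J,K]=2\operatorname{rank}([J,K]|_{E^{0,1}_J})$; and from the identity $[J,K]=(J-K)(J+K)$ the eigenvalue computation gives $[J,K]|_{E^{0,1}_J}=4\,P^{1,0}_J P^{0,1}_K|_{E^{0,1}_J}$, whose kernel is $(E^{0,1}_J\cap E^{0,1}_K)\oplus(E^{0,1}_J\cap E^{1,0}_K)$. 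Writing $a=\dim(E^{0,1}_J\cap E^{0,1}_K)$ and $b=\dim(E^{0,1}_J\cap E^{1,0}_K)$, and noting $n-a=\operatorname{rank}\psi_-$ and $n-b=\operatorname{rank}\psi_+$ (the latter after the conjugation $\dim(E^{1,0}_J\cap E^{0,1}_K)=b$), this yields the clean formula $\operatorname{Rank}[J,K]=2(n-a-b)$.

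Finally I would conclude by expressing $M_{\le s}=\{a+b\ge n-s\}$ as the finite union $\bigcup_{\alpha=0}^{n-s}\bigl(\{a\ge\alpha\}\cap\{b\ge n-s-\alpha\}\bigr)=\bigcup_{\alpha=0}^{n-s}\bigl(\{\operatorname{rank}\psi_-\le n-\alpha\}\cap\{\operatorname{rank}\psi_+\le s+\alpha\}\bigr)$. Each set in this finite union is an intersection of two degeneracy loci of holomorphic maps, hence a holomorphic subvariety, and a finite union of holomorphic subvarieties is again one; therefore $M_{\le s}$ is a holomorphic subvariety. The crux of the whole argument—and the step I expect to require the most care—is precisely this handling of $[J,K]$: recognizing that, although $[J,K]$ is not itself a holomorphic bundle map, its degeneracy locus is governed by the two upper-semicontinuous intersection-dimension functions $a$ and $b$, each of which is cut out holomorphically by $\psi_-$ and $\psi_+$.
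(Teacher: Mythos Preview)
Your argument is correct, but it takes a genuinely different route from the paper's. The paper does not work around the non-holomorphicity of $E^{1,0}_K$; instead it introduces the modified connection $\nabla'=\nabla+\tfrac{1}{2}(\nabla K)K$ (Proposition \ref{PropNAB'}), which again has $(1,1)$ curvature and satisfies $\nabla'K=0$, so that $E^{1,0}_K$ becomes a holomorphic bundle for the second $\overline{\partial}$-operator $\nabla'^{0,1}$. With this in hand, the paper shows in Proposition \ref{PropBUNM} that $J\pm K$ and, crucially, $[J,K]:E^{0,1}_K\to E^{1,0}_K$ are themselves holomorphic bundle maps (the source carrying $\nabla^{0,1}$, the target $\nabla'^{0,1}$); Theorem \ref{ThmPHS} then follows in one line, with each $M^{\delta}$ appearing as a \emph{single} degeneracy locus. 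Your approach stays entirely inside the $\nabla^{0,1}$-holomorphic world, replacing $E^{1,0}_K$ by the quotient $Q=E_{\mathbb{C}}/E^{0,1}_K$ and recovering $M_{\le s}$ combinatorially as a finite union $\bigcup_\alpha\bigl(\{\operatorname{rank}\psi_-\le n-\alpha\}\cap\{\operatorname{rank}\psi_+\le s+\alpha\}\bigr)$. This is more elementary in that it avoids introducing a second connection, but it loses something the paper exploits later: your statement that ``$[J,K]$ is not itself a holomorphic bundle map'' is true only relative to $\nabla^{0,1}$ alone, whereas the paper's direct holomorphicity of $[J,K]$ is what yields the holomorphic section of $\wedge^2 E^{1,0}_K$ (and hence the holomorphic Poisson structure in the bihermitian case, Corollary \ref{CorHBM}), and also allows the refinement in Proposition \ref{PropBUNM2} that only $R^{\nabla'}$, not $R^{\nabla}$, need be $(1,1)$.
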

\begin{nota} 
\label{nota123}
We will similarly define $M_{s}$ and $M_{(r, \pm)}$ as above but with the appropriate $\leq$ signs replaced with $=$.
\end{nota}
The reason that the above real bundle maps yield holomorphic subvarieties is that they are in fact holomorphic when restricted to the appropriate holomorphic bundles, which we now describe. 

First consider the holomorphic bundle $E_{\mathbb{C}}$ that is equipped with the $\overline{\partial}$-operator $ \nabla^{0,1}$ (see Lemma \ref{LemCOR}). Since $J$ is parallel, $E^{1,0}_{J}$ and $E^{0,1}_{J}$ are two holomorphic subbundles of $E_{\mathbb{C}}$, and since $K$ is holomorphic, by Proposition \ref{PropDBAR}, 
$E^{0,1}_{K}$ is a third. Now the holomorphicity of $K$, as explained in Section \ref{SecHST}, can also be used to show that the connection $\nabla'=\nabla + \frac{1}{2}(\nabla K)K$ has (1,1) curvature, so that $\nabla'^{0,1}$ is another $\overline{\partial}$-operator on $E_{\mathbb{C}}$. As $\nabla'K=0$,  $E^{1,0}_{K}$ and  $E^{0,1}_{K}$ are holomorphic subbundles. (Note that since $(\nabla^{0,1}K)E^{0,1}_{K}=0$, $\nabla^{0,1}= \nabla'^{0,1}$ when acting on $E^{0,1}_{K}$.)

Given these bundles, we have  
\begin{prop}
\label{PropBUNM}
The following are holomorphic 
\begin{align*}
&1)\ J+K: E^{0,1}_{K} \longrightarrow E^{0,1}_{J}  \ \ \ \ \ \ \  2) \ J-K: E^{0,1}_{K} \longrightarrow E^{1,0}_{J} \\
&3)\ J+K: E^{1,0}_{J} \longrightarrow E^{1,0}_{K} \ \ \ \ \ \ \ 4)\ J-K: E^{0,1}_{J} \longrightarrow E^{1,0}_{K} \\
& \ \ \ \ \  \ \  \ \ \ \ \  \ \  \ \ 5)\ [J,K]: E^{0,1}_{K} \longrightarrow E^{1,0}_{K}. 
\end{align*}
\end{prop}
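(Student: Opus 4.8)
The plan is to translate holomorphicity of a bundle map into the intertwining of $\overline{\partial}$-operators and then reduce every case to the single identity $(\nabla^{0,1}K)E^{0,1}_K=0$, which encodes the holomorphicity of $K$. Recall that a map $\phi\colon A\to B$ between holomorphic subbundles of $E_{\mathbb{C}}$, carrying the $\overline{\partial}$-operators inherited from the ambient $(0,1)$-connections, is holomorphic precisely when $\overline{\partial}_B(\phi a)=\phi(\overline{\partial}_A a)$ for every local section $a$ of $A$. In each of the five cases $\phi$ is the restriction of one of the ambient endomorphisms $J+K$, $J-K$ or $[J,K]$, and $\overline{\partial}_A,\overline{\partial}_B$ are the restrictions of either $\nabla^{0,1}$ or $\nabla'^{0,1}$, according to which of the parallelism relations ($\nabla J=0$, $\nabla'K=0$) makes the relevant subbundle holomorphic.

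First I would dispose of the elementary bookkeeping: that each endomorphism actually carries the stated source into the stated target. For instance, if $Ke=-ie$ then $(J+K)e=Je-ie$ satisfies $J\big((J+K)e\big)=-i\,(J+K)e$, so $(J+K)e\in E^{0,1}_J$; the remaining inclusions follow the same way from $J^{2}=K^{2}=-1$ together with the eigenvalue of $a$ under $J$ or $K$. Equivalently, these are the statements that the projections $\tfrac12(1\mp iJ)$ and $\tfrac12(1\mp iK)$ behave as expected on the images.

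For the two maps whose source and target are both holomorphic for $\nabla^{0,1}$ (cases 1 and 2), I would simply expand $\nabla^{0,1}\big((J\pm K)a\big)$ by the Leibniz rule. Because $\nabla J=0$ forces $\nabla^{0,1}J=0$, the only surviving error term is $(\nabla^{0,1}K)a$; and since the source is $E^{0,1}_K$, this vanishes by the holomorphicity of $K$. Cases 3 and 4 are where the real work lies, since there the target $E^{1,0}_K$ is holomorphic only for $\nabla'^{0,1}$. Here I would substitute $\nabla'^{0,1}=\nabla^{0,1}+\tfrac12(\nabla^{0,1}K)K$, run the same Leibniz expansion, and collect terms. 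Using $Ja=\pm ia$ on the source and $K^{2}=-1$, the defect collapses to $(\nabla^{0,1}K)$ applied to $\tfrac12(1+iK)a$, that is, to $(\nabla^{0,1}K)$ evaluated on the $E^{0,1}_K$-component of $a$, which again vanishes.

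The last case I would not compute directly. Instead I would use the algebraic factorization $[J,K]=(J-K)(J+K)$ coming from $J^{2}=K^{2}=-1$, which exhibits the map in case 5 as the composite $E^{0,1}_K\xrightarrow{J+K}E^{0,1}_J\xrightarrow{J-K}E^{1,0}_K$ of the maps in cases 1 and 4; since a composition of holomorphic bundle maps is holomorphic, this finishes the proof. The main obstacle is precisely the operator mismatch in cases 3 and 4 (and hence 5): source and target are holomorphic for different $\overline{\partial}$-operators, and the argument succeeds only because the correction $\tfrac12(\nabla^{0,1}K)K$ relating $\nabla'^{0,1}$ to $\nabla^{0,1}$ conspires, after using $K^{2}=-1$ and the eigenvalue relation, to push the leftover term back inside $E^{0,1}_K$, where $(\nabla^{0,1}K)$ is forced to vanish.
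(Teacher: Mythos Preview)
Your proposal is correct and follows essentially the same route as the paper: cases 1) and 2) are dismissed as straightforward, case 5) is obtained by composing 1) with 4) (the paper also notes 2) with 3) works), and the substantive computation is case 3), where the paper likewise reduces the defect to an expression annihilated by the holomorphicity condition $(\nabla^{0,1}K)E^{0,1}_{K}=0$. The only cosmetic difference is that the paper takes a $\nabla^{0,1}$-holomorphic section $e\in\Gamma(E^{1,0}_{J})$, uses $\nabla'K=0$ to pull $(i+K)$ through, and then invokes the equivalent form $(i+K)\nabla^{0,1}K=0$, whereas you expand first and then recognize the projector $\tfrac12(1+iK)$; the content is the same.
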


\begin{proof}
The proofs of 1) and 2) are straightforward. To prove 3), let $e \in \Gamma(E^{1,0}_{J})$ satisfy $\nabla^{0,1}e=0$ and consider 
 \begin{equation*} \nabla'^{0,1}(J+K)e= (i+K)\nabla'^{0,1}e= 
\frac{1}{2}(i+K)(\nabla^{0,1}K)Ke.
\end{equation*}
 Since $(i+K)\nabla^{0,1}K=0$, the map given in 3) is holomorphic.
 
 The proof of 4) is similar, and that of 5) follows by composing the maps in 1) and 4) or the maps in 2) and 3).
\end{proof}
Note that Theorem \ref{ThmPHS} immediately follows from the above proposition.

Although we proved the holomorphicity of $[J,K]$ by composing, say, the maps given in 1) and 4), we should stress that it does not depend on the (1,1) condition on the curvature of $\nabla$, $R^{\nabla}$---but only depends on the (1,1) condition on $R^{\nabla'}$: 
  \begin{prop}
  \label{PropBUNM2}
  Let $\nabla$ be a connection on $E \longrightarrow (M,I)$ and let $J$ and $K$  be sections of $\mathcal{C}$ such that $\nabla J=0$ and $K\nabla K=\nabla_{I} K$. Moreover, assume that  $R^{\nabla'}$, where $\nabla'=\nabla + \frac{1}{2}(\nabla K)K$,  is (1,1).
  Then 
\[ [J,K]: E^{0,1}_{K} \longrightarrow E^{1,0}_{K} \]
is holomorphic, where each of the bundles is equipped with the $\overline{\partial}$-operator $\nabla'^{0,1}$. 
  \end{prop}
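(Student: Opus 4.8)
The plan is to work entirely with the connection $\nabla'$ and its $\overline{\partial}$-operator $\nabla'^{0,1}$, and to show that $[J,K]$ sends $\nabla'^{0,1}$-holomorphic sections of $E^{0,1}_{K}$ to $\nabla'^{0,1}$-holomorphic sections of $E^{1,0}_{K}$. First I would record the structural facts that need no curvature hypothesis on $\nabla$. A direct computation using only $K^{2}=-1$ shows that $\nabla'K=0$: writing $\nabla'=\nabla+\tfrac12(\nabla K)K$ and using that $\nabla K$ anticommutes with $K$, the correction term contributes exactly $-\nabla K$, cancelling $\nabla K$. Once we impose that $R^{\nabla'}$ is $(1,1)$, the eigenbundles $E^{1,0}_{K}$ and $E^{0,1}_{K}$ are holomorphic subbundles of $(E_{\mathbb{C}},\nabla'^{0,1})$ (exactly as in the discussion preceding Proposition \ref{PropBUNM}), and the projections $P^{\pm}=\tfrac12(1\mp iK)$ onto them are $\nabla'$-parallel.

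Next I would rewrite the map in a form adapted to $\nabla'$. For $e\in E^{0,1}_{K}$ one has $Ke=-ie$, and a short computation gives $[J,K]e=-2i\,P^{+}Je$, where $P^{+}=\tfrac12(1-iK)$ projects onto $E^{1,0}_{K}$. Thus $[J,K]$ is, up to the constant $-2i$, the composition of the holomorphic inclusion $E^{0,1}_{K}\hookrightarrow E_{\mathbb{C}}$, the endomorphism $J$, and the holomorphic projection $P^{+}$. Holomorphicity then reduces to showing that $\nabla'^{0,1}\big([J,K]e\big)=0$ for every local section $e$ of $E^{0,1}_{K}$ with $\nabla'^{0,1}e=0$.

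To verify this I would differentiate. Since $P^{+}$ is parallel and $\nabla'^{0,1}e=0$, we get $\nabla'^{0,1}\big([J,K]e\big)=-2i\,P^{+}(\nabla'^{0,1}J)e$. On endomorphisms $\nabla'^{0,1}=\nabla^{0,1}+[\,\tfrac12\alpha K,\,\cdot\,]$ with $\alpha:=\nabla^{0,1}K$, and because $\nabla J=0$ this yields $\nabla'^{0,1}J=\tfrac12[\alpha K,J]$. Evaluating on $e$, the term $J\alpha Ke=-iJ\alpha e$ vanishes, since the holomorphicity condition $K\nabla K=\nabla_{I}K$ is equivalent to $\alpha|_{E^{0,1}_{K}}=0$ (Propositions \ref{PropHSC} and \ref{PropDBAR}); hence $(\nabla'^{0,1}J)e=\tfrac12\alpha KJe$.

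The decisive point, which makes the projection do its work, is a constraint on the image of $\alpha$: because $\alpha$ anticommutes with $K$ (differentiate $K^{2}=-1$) and annihilates $E^{0,1}_{K}$, it maps all of $E_{\mathbb{C}}$ into $E^{0,1}_{K}$ — on $E^{1,0}_{K}$ the anticommutation forces $\alpha$ into the $-i$ eigenspace, while $E^{0,1}_{K}$ is killed outright. Therefore $\alpha KJe\in E^{0,1}_{K}$, so $P^{+}(\alpha KJe)=0$ and $\nabla'^{0,1}\big([J,K]e\big)=0$, as required. The main obstacle here is conceptual rather than computational: one must notice that the argument never uses $(1,1)$-ness of $R^{\nabla}$, only of $R^{\nabla'}$, and that everything rests on the purely algebraic identities $\nabla'K=0$, $\{K,\nabla K\}=0$, and $\alpha|_{E^{0,1}_{K}}=0$, which are valid independently of any curvature assumption.
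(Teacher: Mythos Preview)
Your proof is correct and follows essentially the same approach as the paper's: both take a $\nabla'^{0,1}$-holomorphic section $e\in\Gamma(E^{0,1}_{K})$, use $\nabla'K=0$ to reduce $\nabla'^{0,1}([J,K]e)$ to $-(i+K)(\nabla'^{0,1}J)e=-\tfrac12(i+K)[(\nabla^{0,1}K)K,J]e$, and then kill this via the identity $(i+K)\nabla^{0,1}K=\nabla^{0,1}K(i-K)=0$, which is exactly your pair of observations that $\alpha$ annihilates $E^{0,1}_{K}$ and has image in $E^{0,1}_{K}$. The only difference is cosmetic: you write the projection as $P^{+}=\tfrac12(1-iK)$ where the paper writes $-(i+K)$, and you spell out in more detail why the image of $\alpha$ lands in $E^{0,1}_{K}$.
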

  \begin{proof}
  Let $e \in \Gamma(E^{0,1}_{K})$ satisfy $\nabla'^{0,1}e=0$ and consider 
  \begin{equation*}
  \nabla'^{0,1}[J,K]e=-(i+K)(\nabla'^{0,1}J)e.
  \end{equation*} 
  Since $\nabla J=0$, this equals 
  \[-\frac{1}{2}(i+K)[(\nabla^{0,1}K)K,J]e,\]
which is zero because 
\[(i+K)\nabla^{0,1}K= \nabla^{0,1}K (i-K)=0.\]
Hence $[J,K]$ is holomorphic. 
  \end{proof}
  As an example, if $(M,g,I)$ is not an SKT manifold then the curvature of $\nabla^{-}$ (Section \ref{SecSKT}) is not (1,1) but the curvature of $(\nabla^{-})'=\nabla^{-} + \frac{1}{2}(\nabla^{-} I)I=\nabla^{Ch}$ is always so. (Note that $I\nabla^{-}I$ still equals $\nabla^{-}_{I}I$.)
\subsubsection{Metric Case}
Considering the setup of Theorem \ref{ThmPHS}, let us now further suppose that $E$ is equipped with a fiberwise metric $g$, $\nabla$ is 
a metric connection and $J$ and $K$ are sections of $\mathcal{T}(E,g)$. 
Then note:
\begin{itemize}
\item The holomorphicity of map 3) in Proposition \ref{PropBUNM}  can be derived from that of 1). The reason is that map 3) equals  $-g^{-1}(J+K)^{t}g$, where $J+K: E^{0,1}_{K} \longrightarrow E^{0,1}_{J}$. Similarly, the holomorphicity of map 4) can be derived from that of 2).
\item The maps 
   \begin{itemize}
   \item[$\cdot$] $[J,K]g^{-1}: E^{*1,0}_{K} \longrightarrow E^{1,0}_{K}$
   \item[$\cdot$] $g[J,K]: E^{0,1}_{K} \longrightarrow E^{*0,1}_{K}$
   \end{itemize}
respectively define holomorphic sections of $\wedge^{2}E^{1,0}_{K}$ and $\wedge^{2}E^{*0,1}_{K}$.  Using Proposition \ref{PropBUNM2}, this still holds  true if $R^{\nabla'}$, but not necessarily $R^{\nabla}$, is $(1,1)$.
\end{itemize}
\subsection{The $M^{\#}$ and $M^{\delta}$ }
\label{SecMDEL}
Consider the setup of Theorem \ref{ThmPHS} of a real rank $2n$ bundle $E \longrightarrow (M,I)$ and the respective parallel and holomorphic sections $J$ and $K$ of $(\mathcal{C}(E),\mathcal{J}^{(\nabla,I)})\longrightarrow (M,I)$. We will now introduce decompositions of the holomorphic subvarieties  $M_{\leq s}$ and $M_{(\leq r,\pm)}$.  
\begin{defi}
 \label{DefMPOUND}
Let $M^{\#}$ stand for any of the following: 
\begin{align*} 
1) \ &  M^{(m_{1},*)}= \{x \in M | \ dimKer(J + K)|_{x}= 2m_{1} \} \\
2) \ &  M^{(*,m_{-1})}= \{x \in M | \ dimKer(J - K)|_{x}= 2m_{-1} \} \\
3) \ & M^{(m_{1},m_{-1})}= M^{(m_{1},*)} \cap  M^{(*,m_{-1})}.
\end{align*}
\end{defi}
\begin{nota} \label{notaMPOUND22}
So far we have introduced the $M_{\leq s}$, $M_{(\leq r,\pm)}$ and $M^{\#}$. We will let $M^{\delta}$ stand for any of these holomorphic subvarieties.
\end{nota}

To relate the $M^{\#}$ to the other subvarieties, we will first decompose $M_{s}$ (see Notation \ref{nota123}) into a disjoint union of some of the $M^{(m_{1},m_{-1})}$: 
 
\begin{prop}
\label{PropMS} 
\begin{itemize}
\item[]
\item[1)]  \[ M_{s}= \bigcup_{m_{1}+m_{-1}=n-s} M^{(m_{1},m_{-1})}.\]

\item[2)] Each $M^{(m_{1},m_{-1})}$ in the above union is open in $M_{s}$.
\item[3)] $M_{(r,+)}=M^{(n-r,*)}$ and $M_{(r,-)}=M^{(*,n-r)}$.
 \end{itemize}
 \end{prop}
 \begin{proof}
 The proof of Part 1) follows from Lemma \ref{LemCOM} given below. While the proof of Part 2) follows from the fact that the dimensions of $ker(J+K)$ and $ker(J-K)$ cannot locally increase. Lastly, the proof of Part 3) is clear.
 \end{proof}
 \begin{lemma}
 \label{LemCOM}
 Let $V$ be an even dimensional real vector space and let $J$ and $K$ be elements of $\mathcal{C}(V)$. The $ker[J,K]=ker(J+K) \oplus ker(J-K)$.
 \end{lemma}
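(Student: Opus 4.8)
The plan is to prove the two inclusions of the asserted equality separately, with most of the effort going into $\ker[J,K]\subseteq\ker(J+K)\oplus\ker(J-K)$. First I would record the elementary identities $(J-K)(J+K)=[J,K]$ and $(J+K)(J-K)=-[J,K]$, which follow at once from $J^{2}=K^{2}=-\mathrm{id}$. These immediately give $\ker(J+K)\subseteq\ker[J,K]$ and $\ker(J-K)\subseteq\ker[J,K]$. Directness of the sum is equally quick: a vector annihilated by both $J+K$ and $J-K$ is annihilated by their sum $2J$, and $J$ is invertible, so $\ker(J+K)\cap\ker(J-K)=0$.

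For the reverse inclusion, set $W=\ker[J,K]$. The key structural fact is that $W$ is invariant under both $J$ and $K$. I would derive this from the anticommutation relations $[J,K]J=-J[J,K]$ and $[J,K]K=-K[J,K]$, each obtained by expanding both sides and simplifying with $J^{2}=K^{2}=-\mathrm{id}$; they show that $[J,K]v=0$ implies $[J,K]Jv=[J,K]Kv=0$. Since $J$ and $K$ commute on $W$ by definition, the composite $A=JK$ restricts to an endomorphism of $W$, and for $v\in W$ the computation $A^{2}v=JK(JKv)=JK(KJv)=JK^{2}Jv=-J^{2}v=v$ (using $JKv=KJv$ on $W$) shows that $A$ is an involution of $W$.

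It then remains to identify the $\pm 1$ eigenspaces $W_{\pm}$ of $A$ on $W$. On $W_{+}$ one has $JKv=v$; applying $J$ and using $J^{2}=-\mathrm{id}$ gives $-Kv=Jv$, i.e. $(J+K)v=0$, so $W_{+}\subseteq\ker(J+K)$. Symmetrically, on $W_{-}$ one has $JKv=-v$, which yields $(J-K)v=0$ and $W_{-}\subseteq\ker(J-K)$. Hence $W=W_{+}\oplus W_{-}\subseteq\ker(J+K)\oplus\ker(J-K)$, which combined with the first inclusion gives the desired equality (and in fact pins down $\ker(J+K)=W_{+}$ and $\ker(J-K)=W_{-}$).

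I expect the main obstacle to be precisely the invariance of $W=\ker[J,K]$ under $J$ and $K$: without it the involution $A=JK$ would not restrict to $W$, and the eigenspace decomposition---the engine of the reverse inclusion---would be unavailable. Everything else reduces to routine manipulation of the relations $J^{2}=K^{2}=-\mathrm{id}$.
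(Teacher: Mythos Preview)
Your proof is correct and follows essentially the same approach as the paper: restrict $JK$ to $\ker[J,K]$, observe it squares to the identity there, and identify the $\pm 1$ eigenspaces with $\ker(J+K)$ and $\ker(J-K)$. Your version is simply more explicit---you supply the invariance of $\ker[J,K]$ under $J$ and $K$ via the anticommutation relations and separately verify the easy inclusion and directness, whereas the paper compresses all of this into two sentences.
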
 
 
 \begin{proof}
 If we restrict $JK$ to $ker[J,K]$ then it squares to $1$. Hence \[ker[J,K]= W_{1} \oplus W_{-1},\]
 where $JK|_{W_{1}}=1$ and $JK|_{W_{-1}}=-1$.
 The lemma then follows from the fact that $W_{1}=ker(J+K)$ and $W_{-1}=ker(J-K)$.
 \end{proof}
 
 \subsubsection{Metric Case}
 \label{SecTHSMC}
 Let us now further suppose that $E$ is equipped with a fiberwise metric $g$, $\nabla$ is a metric connection and $J$ and $K$ are sections of $\mathcal{T}(E,g)$. The following gives some additional properties of the $M^{\#}$.
 
 \begin{prop}
 \label{PropM'P}
 \begin{enumerate}
  \item[]
  \item[1)] Given $x \in M$, the following is an orthogonal and $J,K-$invariant splitting of $E_{x}$: \[Im[J,K] \oplus ker(J+K) \oplus ker(J-K).\] Moreover the $rank[J,K]=4k$.
  \item[2)] $M$ is a disjoint union of the following open subsets: \[\bigcup_{m_{1}=even}M^{(m_{1},*)} \text{ and } \bigcup_{m_{1}=odd}M^{(m_{1},*)}.\]
  \item[3)] $\bigcup_{m_{1}=even}M^{(m_{1},*)}= \{x \in M | \ J_{x} \text{ and } K_{x} \text{ induce the same orientations} \}. $
  \item[4)] $\bigcup_{m_{1}=odd}M^{(m_{1},*)}= \{x \in M | \ J_{x} \text{ and } K_{x} \text{ induce opposite orientations} \}. $
 \end{enumerate} 
Parts $2)-4)$ are also true if we were to replace $M^{(m_{1},*)}$ with $M^{(*,m_{-1})}$ and $J_{x}$ with $-J_{x}$.
\end{prop}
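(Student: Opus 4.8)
The plan is to treat this as pointwise linear algebra on each fiber $E_x$ (orthogonality, invariance, the mod-$4$ statement, and the orientation count) together with one continuity argument for the openness in part $2)$. Throughout I use that $J,K \in \mathcal{T}(E_x,g)$ means $J^2=K^2=-1$ and $J^t=-J$, $K^t=-K$ (where $(\cdot)^t$ denotes the $g$-transpose), so that $[J,K]^t=-[J,K]$ and $P:=JK$ is orthogonal with $P^{-1}=KJ=P^t$.

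For part $1)$, I would first note that skew-symmetry of $[J,K]$ gives $\mathrm{Im}[J,K]=(\ker[J,K])^{\perp}$, which combined with Lemma \ref{LemCOM} yields the claimed direct sum $E_x=\mathrm{Im}[J,K]\oplus\ker(J+K)\oplus\ker(J-K)$. Invariance of $\ker(J\pm K)$ under $J$ and $K$ is a one-line check (for $v\in\ker(J+K)$ one has $Jv=-Kv$, and applying $J,K$ gives $KJv=JKv=v$, so $Jv,Kv\in\ker(J+K)$), and invariance of $\mathrm{Im}[J,K]$ follows since it is the orthogonal complement of $J,K$-invariant subspaces while $J,K$ are orthogonal. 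Orthogonality of $\ker(J+K)$ and $\ker(J-K)$ comes from their being eigenspaces of the $g$-symmetric operator $JK|_{\ker[J,K]}$ for the distinct eigenvalues $+1$ and $-1$. The substantive point is $\mathrm{rank}[J,K]=4k$. On $V:=\mathrm{Im}[J,K]$ the operator $A:=[J,K]$ is invertible and anticommutes with $J$ (directly, $JA+AJ=0$); since $A$ is skew and invertible, $A^2$ is symmetric, negative-definite and commutes with $J$, so $B:=A(-A^2)^{-1/2}$ is a well-defined orthogonal complex structure on $V$ that still anticommutes with $J$. Then $J,B,JB$ endow $V$ with a quaternionic structure, forcing $\dim_{\mathbb{R}}V\equiv 0\pmod 4$, i.e. $\mathrm{rank}[J,K]=4k$.

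For parts $3)$ and $4)$ I would compute the orientation of $K$ relative to $J$ summand by summand along the splitting of part $1)$. On $\ker(J-K)$ we have $J=K$, contributing trivially. On $\ker(J+K)$ we have $K=-J$, and on a $J$-complex space of real dimension $2m_{1}$ the conjugate structure $-J$ reverses orientation by $(-1)^{m_{1}}$. On $V$ one shows $J$ and $K$ induce the same orientation: writing $K=-JP$ on $V$ and using that $P\in SO(V)$ has no eigenvalue $\pm 1$ (so $\log P$ exists), the path $K_{s}:=J^{-1}P^{s}$ is a continuous family of orthogonal complex structures with $K_{0}=-J$ and $K_{1}=K$, so $-J$ and $K$ lie in the same connected component of the (two-component) space of orthogonal complex structures and hence agree in orientation; finally $-J$ and $J$ agree on $V$ because $\dim_{\mathbb{R}}V=4k$ makes $(-1)^{\dim_{\mathbb{R}}V/2}=+1$. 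Multiplying the three contributions shows $K$ and $J$ induce the same orientation on $E_{x}$ exactly when $(-1)^{m_{1}}=+1$, which is parts $3)$ and $4)$.

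Part $2)$ then drops out: the two unions are the preimages of the two disjoint orientation-agreement conditions of $3)$ and $4)$, and since $J,K$ are continuous sections the relative orientation of $J_{x}$ and $K_{x}$ is locally constant, so each union is open and they partition $M$. The concluding sentence follows by applying everything verbatim to the pair $(-J,K)$: $-J$ is again a section of $\mathcal{T}$, and $\ker((-J)+K)=\ker(J-K)$ identifies $M^{(m_{1},*)}$ for $(-J,K)$ with $M^{(*,m_{-1})}$ for $(J,K)$. I expect the main obstacle to be the $V=\mathrm{Im}[J,K]$ contribution in $3)$--$4)$: the orientation there is neutral only because $\dim_{\mathbb{R}}V$ is divisible by $4$, so the mod-$4$ statement of part $1)$ is not merely a curiosity but precisely the fact that makes the parity count in $3)$--$4)$ come out correctly.
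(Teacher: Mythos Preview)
Your proposal is correct and follows the same overall strategy as the paper: reduce Proposition~\ref{PropM'P} to a pointwise linear-algebra statement (Proposition~\ref{PropREP}) by establishing the orthogonal $J,K$-invariant splitting, showing $\mathrm{Im}[J,K]$ is quaternionic so that its real dimension is divisible by $4$, and then reading off the parity of $m_1$ from the orientation contribution of each summand.

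The implementations differ in two places. For the quaternionic structure on $\mathrm{Im}[J,K]$, the paper diagonalises $P=JK$ and on each eigenspace $V_e$ (where $\{J,K\}=2e$) sets $J'=(JK-e)/f$ with $e^2+f^2=1$; your polar-part $B=[J,K](-[J,K]^2)^{-1/2}$ is exactly $\pm J'$ on each $V_e$, so the two constructions coincide up to sign and your global formulation is arguably cleaner. For the orientation on $\mathrm{Im}[J,K]$, the paper's route (left implicit) is that on each $V_e$ one has $K=-eJ-f\,JJ'$, so $K$ sits on the unit $2$-sphere in $\mathrm{span}\{J,J',JJ'\}$ and is therefore connected to $J$ through orthogonal complex structures; this makes the orientation agreement immediate. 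Your path $K_s=-J\,P^{s}$ also works, but note that $K_s^2=-1$ is not automatic: it needs $J^{-1}P^{s}J=P^{-s}$, which in turn requires that the chosen $\log P$ anticommutes with $J$. This is true (from $J^{-1}PJ=P^{-1}$ and uniqueness of the principal skew-symmetric logarithm of an orthogonal matrix without eigenvalue $-1$), but you should state it. With that one line added, your argument is complete and in fact more explicit about the orientation count than the paper's.
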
 
\begin{example}
Suppose that $rankE=4$ and $J$ and $K$ induce the same orientations on $E$. Then by the above proposition, $M=M^{(0,0)} \cup M^{(0,2)} \cup M^{(2,0)}$. \qed
\end{example}

 Proposition \ref{PropM'P} follows immediately from the following brief background on the algebraic interaction of two complex structures. 

\textbf{Some Background:} Let $(V,g)$ be an even dimensional real vector space equipped with a positive definite metric and let $J$ and $K \in \mathcal{T}(V,g)$. Consider the orthogonal and $J,K-$invariant splitting: 
 \begin{equation} 
 \label{EqSP}
 V= Im[J,K] \oplus ker(J+K) \oplus ker(J-K).
 \end{equation}
 We then have:

\begin{prop}
\label{PropREP}
\begin{align*}
1) \ & Im[J,K] \text{ is an } \mathbb{H}\text{-module and is 4k real dimensional}.\\
2) \ & J \text{ and } K \text{ induce the same orientation on $V$ if and only if }\frac{dimKer(J+K)}{2} \\ &\text{is even. }
\end{align*}
\end{prop}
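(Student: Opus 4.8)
The plan is to encode the interaction of $J$ and $K$ in the two operators $P=JK+KJ$ and $Q=[J,K]=JK-KJ$, and to exploit one clean identity relating them. First I would record the purely algebraic consequences of $J^{2}=K^{2}=-1$ together with the orthogonality relations $J^{t}=-J$ and $K^{t}=-K$: the operator $P$ is symmetric, $Q$ is skew-symmetric, $P$ and $Q$ commute, each of $J$ and $K$ commutes with $P$ and anticommutes with $Q$, and $Q^{2}=P^{2}-4$ (this last identity follows from $AA^{t}=A^{t}A=1$ for $A=JK$). Because $Q$ is skew-symmetric, $Im[J,K]=(\ker Q)^{\perp}=(\ker[J,K])^{\perp}$, which is precisely the first summand in \eqref{EqSP}; on it $Q$ is invertible, so $Q^{2}$ is negative definite and the spectrum of $P$ lies in the open interval $(-2,2)$.

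For part 1) I would manufacture, on $Im[J,K]$, a complex structure that anticommutes with $J$. Using functional calculus in the symmetric operator $P$ (legitimate since $1-\tfrac14 P^{2}=-\tfrac14 Q^{2}$ is positive definite there), set $a=(1-\tfrac14P^{2})^{-1/2}$, $b=\tfrac12 aP$, and $K'=aK+bJ$. Since $a$ and $b$ commute with both $J$ and $K$, a direct computation using $JK+KJ=P$ gives $K'^{2}=-a^{2}(1-\tfrac14P^{2})=-1$, $K'^{t}=-K'$, and $JK'+K'J=aP-2b=0$. Thus $J$ and $K'$ generate a quaternionic action on $Im[J,K]$, exhibiting it as an $\mathbb{H}$-module; in particular its real dimension is a multiple of $4$, say $4k$.

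For part 2) I would compare the complex orientation determined by $J$ with that determined by $K$ summand by summand in the orthogonal, $J,K$-invariant decomposition \eqref{EqSP}, since the splitting is orthogonal and $J,K$-invariant and hence the sign relating the two orientations on $V$ is the product of the corresponding signs on the three pieces. On $\ker(J-K)$ one has $J=K$, so the contribution is $+1$. On $\ker(J+K)$ one has $K=-J$, and on a complex vector space of complex dimension $m_{1}=\tfrac12\dim\ker(J+K)$ the orientations of $J$ and $-J$ differ by $(-1)^{m_{1}}$. Finally, on $Im[J,K]$ I would show the contribution is $+1$: inverting the formula for $K'$ gives $K=cK'+sJ$ with $c=(1-\tfrac14P^{2})^{1/2}$ and $s=-\tfrac12P$, so $c^{2}+s^{2}=1$; restricting to a single eigenspace of $P$ (each of which is $J,K',K$-invariant and again an $\mathbb{H}$-module) the coefficients $c,s$ are scalars, so there $J$ and $K$ lie on one connected $2$-sphere of complex structures inside the quaternion algebra generated by $J$ and $K'$ and therefore induce the same orientation. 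Multiplying the three signs yields $(-1)^{m_{1}}$, so $J$ and $K$ induce the same orientation on $V$ exactly when $m_{1}=\tfrac12\dim\ker(J+K)$ is even.

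The main obstacle is the orientation comparison on $Im[J,K]$ in part 2): one must pass from the anticommuting pair $(J,K')$ back to the original pair $(J,K)$ and argue that $J$ and $K$ are joined by a path of orthogonal complex structures. The eigenspace decomposition of $P$ reduces this to the standard fact that the unit sphere of imaginary quaternions is connected, but care is needed because the ``angle'' between $J$ and $K$ varies across the spectrum of $P$; organizing the argument through $P$-eigenspaces, where that angle is constant, is what makes it go through. The remaining computations, namely the identity $Q^{2}=P^{2}-4$, the three defining relations for $K'$, and the $(-1)^{m_{1}}$ orientation count, are short and routine.
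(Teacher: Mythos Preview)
Your argument is correct and follows essentially the same strategy as the paper: both proofs use the spectral decomposition of the ``angle operator'' between $J$ and $K$ to produce, on $Im[J,K]$, a complex structure anticommuting with $J$, and then read off the orientation statement from the resulting quaternionic picture.  The paper diagonalizes $JK$ over $\mathbb{C}$, passes to the real eigenspaces $V_{e}$ of $\{J,K\}$, and on each $V_{e}$ builds $J'=(JK-e)/f$ (which anticommutes with \emph{both} $J$ and $K$); you instead work globally via functional calculus in the symmetric operator $P=\{J,K\}$ and build $K'=aK+bJ$ (which anticommutes with $J$ only).  These are two parametrizations of the same quaternionic structure, and your identity $Q^{2}=P^{2}-4$ is exactly what makes $(1-\tfrac14 P^{2})^{-1/2}$ well defined on $Im[J,K]$.

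For part 2) you are actually more explicit than the paper, which simply remarks that the orientation statement ``follows from this claim'' and separately notes that $g[J,K]$ being skew on $V^{1,0}_{J}$ forces $rank[J,K]\in 4\mathbb{Z}$.  Your summand-by-summand orientation count, together with the observation that on each $P$-eigenspace $K=cK'+sJ$ lies on the $2$-sphere of unit imaginary quaternions generated by $J$ and $K'$ and is therefore connected to $J$ through orthogonal complex structures, is a clean way to fill in that step.  The one point worth making precise in writing is that the $P$-eigenspaces are $J$-, $K'$-, and $K$-invariant (which you use implicitly when restricting); this is immediate since $J$ and $K$ commute with $P$.
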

\begin{rmk}
The above proposition would not necessarily be true  if we were to assume that $J$ and $K$ are two general elements of $\mathcal{C}(V)$. For instance, take $V=<v_{1},v_{2}>_{\mathbb{R}}$ and define $J$ and $K$ via the equations 
\begin{itemize}
\item $Jv_{1}=v_{2}$, $Jv_{2}=-v_{1}$
\item $Kv_{1}=-rv_{2}$, $Kv_{2}=r^{-1}v_{1}$,  where $r \in \mathbb{R}_{> 0} - {\{1\}} $.
\end{itemize}
 Then $J$ and $K$ are elements of $\mathcal{C}(V)$ that induce opposite orientations on $V$ and yet the $ker(J+K)=0$. Moreover, the $rank[J,K]=2$ and not a multiple of four. Note that $J,K \notin \mathcal{T}(V,g)$ for any metric $g$ because the eigenvalues of $JK$ do not have norm 1.
\end{rmk}

\begin{proof}[Proof of Proposition \ref{PropREP}]
Let us begin by diagonalizing $JK$:
\[V \otimes {\mathbb{C}}= (V_{c_{1}} \oplus \overline{V_{c_{1}}}) \oplus ... \oplus (V_{c_{l}} \oplus \overline{V_{c_{l}}}) \oplus (V_{1}\otimes \mathbb{C}) \oplus (V_{-1}\otimes \mathbb{C}),\]
where $c \in \mathbb{C} - \{\pm1 \}$ satisfies $c\overline{c}=1$, $V_{\pm 1} \subset V$ and $JKv_{\lambda}= \lambda v_{\lambda}$ for $v_{\lambda} \in V_{\lambda}$.

Now set $2e= c + \overline{c}$ and $V_{c} \oplus \overline{V_{c}}=V_{e} \otimes \mathbb{C}$ for $V_{e} \subset V$. We then obtain the following orthogonal and $J,K-$invariant decomposition:  
\[V= V_{e_{1}} \oplus ... \oplus V_{e_{l}} \oplus V_{1} \oplus V_{-1}.\]
Note:
\begin{itemize}
\item $\{J,K\}v=2\epsilon v$, for $v \in V_{\epsilon}$
\item $Im[J,K]= V_{e_{1}} \oplus ... \oplus V_{e_{l}}$, $ker(J+K)=V_{1}$ and $ker(J-K)=V_{-1}$.
\end{itemize}

The claim then is that $V_{e}$---and thus $Im[J,K]$---is an $\mathbb{H}$-module.  The reason is that if we let $J'=\frac{JK-e}{f}|_{V_{e}}$, where $e^{2} +f^{2}=1$, then $(J')^{2}=-1$ and $\{J',K\}=0$ when acting on $V_{e}$. ($\{J',J\}=0$ as well.)

Although the rest of the proof of the proposition follows from this claim, we will now give a more direct proof of the fact that $rank[J,K]=4k$. To see this just note that $g[J,K]:V_{\mathbb{C}}\longrightarrow V^{*}_{\mathbb{C}}$ (where $V_{\mathbb{C}}:= V \otimes \mathbb{C}$) is skew and sends $V^{1,0}_{J} $ to $V^{* 1,0}_{J}$ and $V^{0,1}_{J} $ to $V^{* 0,1}_{J}$.
\end{proof}

\begin{rmk} If we consider the algebra $iD_{\infty}$, generated by two complex structures $J_{0}$ and $K_{0}$ over $\mathbb{R}$, then one may use the above proof to derive its orthogonal representations. (These representations are by definition the ones where $J_{0}$ and $K_{0}$ act by orthogonal transformations with respect to some metric.)

\begin{cor}
The irreducible, orthogonal representations of $iD_{\infty}$ are:
\[ \mathbb{R}[t]/(p) \oplus K_{0}\mathbb{R}[t]/(p), \]
where $J_{0}K_{0}$ acts by $t$ and 
\begin{align*}
1) \ &p= t\pm1 \\
2) \ &p= (t-c)(t-\overline{c}), \text{ for } c \in \mathbb{C} -\{\pm 1\}, c\overline{c}=1.
\end{align*}
\end{cor}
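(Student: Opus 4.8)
The plan is to reduce everything to the single orthogonal operator $T := J_0 K_0$ and to mimic the diagonalization carried out in the proof of Proposition \ref{PropREP}. In an orthogonal representation on $(V,g)$ both $J_0$ and $K_0$ lie in $\mathcal{T}(V,g)$, hence are skew-adjoint and orthogonal; consequently $T = J_0K_0$ is orthogonal and a direct computation gives the dihedral relation $J_0 T J_0^{-1} = T^{-1}$ (and likewise $K_0 T K_0^{-1} = T^{-1}$). Since $K_0 = -J_0 T$ and $J_0^2=-1$, the entire representation is encoded by the pair $(J_0, T)$ subject only to $J_0^2=-1$ and $J_0 T J_0^{-1}=T^{-1}$, which is exactly the defining data of $iD_\infty$. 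So first I would record this relation, as it is what makes the eigenvalues of $T$ stable under $\lambda \mapsto \lambda^{-1}$.

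Next, following the proof of Proposition \ref{PropREP}, I would complexify and split $V\otimes\mathbb{C}$ into the eigenspaces $V_\lambda$ of the unitary operator $T$, so that every eigenvalue satisfies $|\lambda|=1$ and the only possible real eigenvalues are $\pm 1$. The dihedral relation shows $J_0$ carries $V_\lambda$ to $V_{\lambda^{-1}}=V_{\overline\lambda}$, and complex conjugation $\sigma$ (whose fixed set is $V$) does the same. I would then treat the two eigenvalue types separately. For $\lambda=\pm1$ the real eigenspace $V_{\pm1}\subset V$ is $J_0$-invariant, $T$ acts as $\pm1$ there, and $K_0=-J_0T$ equals $\mp J_0$; a minimal $J_0$-invariant (hence $\mathbb{R}$-irreducible) piece is one complex $J_0$-line, i.e. two real dimensions, which is precisely $\mathbb{R}[t]/(t\mp1)\oplus K_0\,\mathbb{R}[t]/(t\mp1)$ with $t=T$ acting by the root $\pm1$. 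This gives case 1).

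The substantive case is $\lambda=c$ with $c\ne\pm1$, $c\overline c=1$, and this is where the ``$i$'' in $iD_\infty$ does real work. Here I would observe that $J_0\sigma$ restricts to an \emph{anti}-linear endomorphism of the complex space $V_c$ with $(J_0\sigma)^2 = J_0(\sigma J_0\sigma) = J_0^2 = -1$ (using that $\sigma$ commutes with the real operator $J_0$); that is, $V_c$ carries a quaternionic structure. Hence $\dim_\mathbb{C}V_c$ is even and at least $2$, so the smallest $J_0,K_0$-invariant real subspace supported on the pair $\{c,\overline c\}$ has real dimension $4$ and is an $\mathbb{H}$-module, exactly as produced in Proposition \ref{PropREP} via $J'=\tfrac{T-e}{f}|_{V_e}$ with $e=\tfrac12(c+\overline c)$ and $e^2+f^2=1$. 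Equivalently, choosing a minimal real $T$-cyclic subspace $\cong\mathbb{R}[t]/(p)$ with $p=(t-c)(t-\overline c)$ and adjoining its $K_0$-image yields the $4$-dimensional irreducible $\mathbb{R}[t]/(p)\oplus K_0\,\mathbb{R}[t]/(p)$ of case 2). Finally I would check completeness and irreducibility: the eigenspace decomposition exhausts $V$, every invariant subspace is a sum of pieces compatible with the $T$-eigenspaces, and the quaternionic structure shows the $4$-dimensional blocks admit no proper invariant subspace.

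The main obstacle is precisely the quaternionic step: for the group $D_\infty$ one would expect the generic irreducible to be the $2$-dimensional rotation block $\mathbb{R}[t]/(p)$, but the condition $J_0^2=-1$ forbids $J_0$ from acting as a reflection and forces the doubling to dimension $4$. Making this rigorous---combining $J_0$ with conjugation to produce the quaternionic structure, ruling out $2$-dimensional irreducibles with $c\ne\pm1$, and then verifying that the resulting $4$-dimensional $\mathbb{H}$-module is genuinely irreducible (and that $\mathbb{R}[t]/(p)$ and $K_0\,\mathbb{R}[t]/(p)$ meet only in $0$)---is the delicate bookkeeping at the heart of the argument.
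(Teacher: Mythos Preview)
Your proposal is correct and follows essentially the same route the paper indicates: the paper offers no separate proof of the corollary but simply remarks that it follows from the diagonalization of $JK$ carried out in the proof of Proposition~\ref{PropREP}, which is exactly the decomposition you perform with $T=J_0K_0$. Your use of the antilinear map $J_0\sigma$ on $V_c$ to exhibit the quaternionic structure is a clean alternative to the paper's $J'=\frac{JK-e}{f}$ on the real slice $V_e$, but the two constructions are equivalent and lead to the same conclusion.
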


In \cite{Gindi3}, we not only derive the orthogonal representations of $iD_{\infty}$ but the indecomposable ones as well. 
\end{rmk}
Having given some basic properties of the $M^{\delta}$ in the above propositions, we will present our major results about them in Section \ref{SecGTM}. There, we determine lower bounds on the dimensions of the $M^{\delta}$ as well as necessary conditions for there to exist curves in $M$ that lie in certain $M^{\#}$. We will give the applications of some of these results, for the case when $M$ is a bihermitian manifold, in Section \ref{SecBM}. Our present focus is to derive them by first considering in the next section an example of the general setup of Theorem \ref{ThmPHS} where the base manifold is itself the total space of $(\mathcal{C}(E),\mathcal{J}^{(\nabla,I)}) \longrightarrow (M,I)$. 
We will then use this example in Section \ref{SecATVP} to establish a twistor point of  view of the general $M^{\delta}$---by realizing them as the intersection of $K(M)$ with the corresponding $\mathcal{C}^{\delta}$ in $\mathcal{C}$. Some of these $\mathcal{C}^{\delta}$ will be shown to be complex submanifolds of $\mathcal{C}$ and by determining their dimensions and describing their tangent bundles, we will derive the results mentioned above. (We will also describe a metric version of this setup where $\mathcal{C}$ is replaced with $\mathcal{T}$.)

In the next section, we will first consider the twistor spaces $\mathcal{C}(V)$ and $\mathcal{T}(V,g)$ which are associated to vector spaces and then those associated to vector bundles and will be focusing on studying the above properties of the $\mathcal{C}^{\delta}$ and $\mathcal{T}^{\delta}$. 
 
\section{Stratifications of Twistor Spaces}
\label{SecSTS1}
\subsection{Twistor Spaces of Vector Spaces}
\label{SecTVS}
\subsubsection{$\mathcal{C}(V)$-case}
Let $V$ be a $2n$ dimensional real vector space and let $\mathcal{C}:=\mathcal{C}(V)$ be its twistor space with complex structure $I_{\mathcal{C}}$. 
 To obtain an example of the setup of Theorem \ref{ThmPHS}, consider the trivial bundle $E=\mathcal{C} \times V \longrightarrow \mathcal{C}$ along with its trivial connection $d$. By Proposition \ref{PropTHS}, $\phi$, defined by $\phi |_{K}=K$, is a natural holomorphic section of $(\mathcal{C}(E),\mathcal{J}^{(d,I_{\mathcal{C}})})$. As for a parallel section, we will choose the constant section $J$, a fixed element of $\mathcal{C}$, so that $dJ=0$.
 By Theorem \ref{ThmPHS}, we then have
\begin{prop}
Given $J \in \mathcal{C}$, the following are holomorphic subvarieties of $\mathcal{C}$:
\begin{align*}
 1)  \ & \mathcal{C}_{\leq s}(J)= \{K \in \mathcal{C} |\ Rank [J,K] \leq 2s\}\\
 2) \ & \mathcal{C}_{(\leq r,\pm)}(J)= \{K \in \mathcal{C} |\ Rank (J\pm K) \leq 2r\}.
\end{align*}
\end{prop}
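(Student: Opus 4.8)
The plan is to read this proposition as a direct specialization of Theorem \ref{ThmPHS} to the concrete data just introduced, so that essentially all the work lies in checking that the three hypotheses of that theorem are in force. I would take the base manifold to be $(M,I)=(\mathcal{C},I_{\mathcal{C}})$, the bundle to be the trivial bundle $E=\mathcal{C}\times V$, and the connection to be the trivial connection $\nabla=d$. The first point to verify is that $d$ has $(1,1)$ curvature, which is immediate since $R^{d}=0$; hence Theorem \ref{thmINT} guarantees that $\mathcal{J}^{(d,I_{\mathcal{C}})}$ is an integrable complex structure on $\mathcal{C}(E)$ and the ambient setting of Theorem \ref{ThmPHS} is legitimate.

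Next I would exhibit the two sections that Theorem \ref{ThmPHS} requires. The parallel section is the constant section $J$: because $J$ is a fixed element of $\mathcal{C}$, it satisfies $dJ=0$, as noted in the setup above. The holomorphic section is $\phi$, defined by $\phi|_{K}=K$; its holomorphicity with respect to $\mathcal{J}^{(d,I_{\mathcal{C}})}$ is precisely Proposition \ref{PropTHS} (the present bundle $\mathcal{C}\times V\longrightarrow\mathcal{C}$ being the pullback bundle of that proposition). With $J$ and $\phi$ playing the roles of the parallel and holomorphic sections, every hypothesis of Theorem \ref{ThmPHS} is satisfied.

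It then remains to identify the three degeneracy loci produced by the theorem. Evaluating the bundle maps $[J,\phi]$, $J+\phi$ and $J-\phi$ at a point $K\in\mathcal{C}$ and using $\phi|_{K}=K$, these maps restrict on the fiber $V$ to $[J,K]$, $J+K$ and $J-K$ respectively. Consequently the subvarieties $M_{\leq s}$ and $M_{(\leq r,\pm)}$ of Theorem \ref{ThmPHS} coincide point for point with $\mathcal{C}_{\leq s}(J)$ and $\mathcal{C}_{(\leq r,\pm)}(J)$, which are therefore holomorphic subvarieties of $\mathcal{C}$. Since each step is either an immediate verification or a direct citation, there is no genuine analytic difficulty here; the only care needed is bookkeeping---confirming that the constant section really is $d$-parallel and that $\phi$ matches the hypothesis of Proposition \ref{PropTHS} under the identification of $\mathcal{C}\times V$ with its pullback-bundle description, after which the conclusion follows formally.
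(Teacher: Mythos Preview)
Your proposal is correct and follows exactly the paper's own route: the paragraph immediately preceding the proposition sets up the trivial bundle $E=\mathcal{C}\times V\to\mathcal{C}$ with the trivial connection $d$, takes the constant $J$ as the parallel section and $\phi$ as the holomorphic section via Proposition~\ref{PropTHS}, and then simply invokes Theorem~\ref{ThmPHS}. Your verification of the hypotheses and identification of the resulting loci with $\mathcal{C}_{\leq s}(J)$ and $\mathcal{C}_{(\leq r,\pm)}(J)$ are precisely what the paper has in mind.
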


We also have the subvarieties $\mathcal{C}^{\#}(J)$ that correspond to the $M^{\#}$ of Definition \ref{DefMPOUND}. More explicitly, $\mathcal{C}^{\#}(J)$ will stand for any of the following: 

\begin{align*} 
1) \ &  \mathcal{C}^{(m_{1},*)}(J)= \{K \in \mathcal{C} | \ dimKer(J + K) = 2m_{1} \} \\
2) \ &  \mathcal{C}^{(*,m_{-1})}(J)= \{K \in \mathcal{C} | \ dimKer(J - K) = 2m_{-1} \} \\
3) \ & \mathcal{C}^{(m_{1},m_{-1})}(J)= \mathcal{C}^{(m_{1},*)} \cap  \mathcal{C}^{(*,m_{-1})}.
\end{align*}

\begin{nota} When referring to the above subvarieties, we will usually drop the 
``$(J)$'' factors and will denote any one of them by $\mathcal{C}^{\delta}$.
\end{nota}
 We will now be studying different properties of the $\mathcal{C}^{\delta}$. In particular, we will show that the $\mathcal{C}^{\#}$ are complex submanifolds that form several stratifications of $\mathcal{C}$ and will determine their dimensions and describe their tangent bundles. 

To accomplish this, we will be using the following holomorphic embedding of $\mathcal{C}$ into the Grassmannians of $n$-planes in $V_{\mathbb{C}}= V \otimes \mathbb{C}$: 

\begin{lemma}
The map 
\begin{align*}
\mu:\  &\mathcal{C} \longrightarrow Gr_{n}(V_{\mathbb{C}}) \\
&K \longrightarrow V^{0,1}_{K},  
 \end{align*}
where $V^{0,1}_{K}$ is the $-i$ eigenspace of $K$, is a holomorphic embedding whose image is open in 
$Gr_{n}(V_{\mathbb{C}})$.
\end{lemma}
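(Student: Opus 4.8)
The plan is to reduce everything to a single differential computation, after first disposing of the elementary structural facts. Since each $K \in \mathcal{C}$ is a real endomorphism with $K^{2}=-1$, its $\mp i$ eigenspaces $V^{0,1}_{K}, V^{1,0}_{K} \subset V_{\mathbb{C}}$ are complex conjugates of one another and satisfy $V_{\mathbb{C}} = V^{0,1}_{K} \oplus V^{1,0}_{K}$; in particular each has complex dimension $n$, so $\mu$ is well-defined. It is injective because $K$ is reconstructed from $W := V^{0,1}_{K}$ as the operator acting by $-i$ on $W$ and by $+i$ on $\overline{W}$. Conversely, any $W \in Gr_{n}(V_{\mathbb{C}})$ with $W \cap \overline{W} = 0$ satisfies $V_{\mathbb{C}} = W \oplus \overline{W}$, and the operator that is $-i$ on $W$ and $+i$ on $\overline{W}$ is real and squares to $-1$; hence $\mu(\mathcal{C}) = \{ W \in Gr_n(V_{\mathbb{C}}) \mid W \cap \overline{W} = 0\}$. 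This already identifies the image, and since transversality of $W$ and $\overline{W}$ is an open condition the image is open; but I would instead deduce openness from the immersion/dimension argument below, which is cleaner.

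The core step is the differential. I would identify $T_{K}\mathcal{C} = \{A \in \mathrm{End}(V) \mid AK + KA = 0\}$ by differentiating $K^{2}=-1$, on which the fiberwise complex structure $\mathcal{J}^{V}=I_{\mathcal{C}}$ acts by $A \mapsto KA$, and $T_{W}Gr_{n}(V_{\mathbb{C}}) = \mathrm{Hom}_{\mathbb{C}}(W, V_{\mathbb{C}}/W)$ with complex structure given by multiplication by $i$. For $A \in T_{K}\mathcal{C}$, anticommutation with $K$ forces $A$ to interchange $V^{0,1}_{K}$ and $V^{1,0}_{K}$. Differentiating the eigenvector relation $K(t)v(t) = -i\,v(t)$ along a curve through $K$ with velocity $A$, I expect to find $(K+i)\dot v = -Av$, so that the class of $\dot v$ in $V_{\mathbb{C}}/V^{0,1}_{K} \cong V^{1,0}_{K}$ is $\tfrac{i}{2}Av$. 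Thus $d\mu_{K}(A) = \tfrac{i}{2}\,A|_{V^{0,1}_{K}}$, viewed in $\mathrm{Hom}(V^{0,1}_{K}, V^{1,0}_{K})$. Two consequences follow at once: $d\mu_{K}$ is injective, since $A|_{V^{0,1}_{K}}=0$ together with reality of $A$ forces $A=0$; and $d\mu_{K}$ is complex linear, since $(KA)|_{V^{0,1}_{K}} = i\,A|_{V^{0,1}_{K}}$ (as $A$ lands in $V^{1,0}_{K}$, where $K=+i$), whence $d\mu_{K}(I_{\mathcal{C}}A) = i\,d\mu_{K}(A)$. Hence $\mu$ is a holomorphic immersion.

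Finally I would compare dimensions: an element of $T_{K}\mathcal{C}$ is a real endomorphism anticommuting with $K$, hence determined by its restriction $V^{0,1}_{K} \to V^{1,0}_{K}$, so $\dim_{\mathbb{C}}\mathcal{C} = n^{2} = \dim_{\mathbb{C}}Gr_{n}(V_{\mathbb{C}})$. An injective holomorphic immersion between complex manifolds of equal dimension is a local biholomorphism, hence an open map; being injective and open it is a homeomorphism onto its image, so $\mu$ is a holomorphic embedding and $\mu(\mathcal{C})$ is open, as claimed. The one point demanding genuine care is the differential computation together with the sign convention for $\mathcal{J}^{V}$: it is precisely the choice $A \mapsto KA$ (rather than $A \mapsto AK = -KA$) that renders $d\mu$ complex linear rather than conjugate linear, so I would check this convention against the definition of $\mathcal{J}^{V}$ in \cite{Gindi1} before concluding.
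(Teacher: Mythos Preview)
Your argument is correct and self-contained. The paper does not actually prove this lemma; its proof reads in full ``See for example \cite{Gindi1}.'' So there is no in-paper argument to compare against. What you have written is the standard direct verification: identify the image with $\{W \in Gr_{n}(V_{\mathbb{C}}) \mid W \cap \overline{W}=0\}$, compute $d\mu_{K}$ by differentiating the eigenvector equation, check complex linearity and injectivity, and finish with the dimension count. Each step is sound, including the computation $d\mu_{K}(A)=\tfrac{i}{2}A|_{V^{0,1}_{K}}$ and the observation that $KA|_{V^{0,1}_{K}} = iA|_{V^{0,1}_{K}}$ because $A$ lands in $V^{1,0}_{K}$.

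Your only caveat is well placed: the sign convention for $\mathcal{J}^{V}$ (left multiplication by $K$ versus right) determines whether $\mu$ is holomorphic or antiholomorphic, and the paper itself merely calls $\mathcal{J}^{V}$ ``the standard fiberwise complex structure'' without writing it down explicitly. Since the paper defers the proof to \cite{Gindi1}, you are right that one must check the convention there; with the choice $A\mapsto KA$ your computation goes through exactly as stated.
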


\begin{proof}
See for example \cite{Gindi1}. 
\end{proof}

We then have

\begin{prop}
\label{PropCGR}
\begin{align*}
&1) \ \mu(\mathcal{C}_{(\leq r,+)}) = \{W \in Im \mu | \ dim_{\mathbb{C}}(V^{1,0}_{J} \cap W) \geq n-r \}\\
&2) \ \mu(\mathcal{C}^{(m_{1},*)}) = \{W \in Im \mu | \ dim_{\mathbb{C}}(V^{1,0}_{J} \cap W) = m_{1}\} \\
&3) \ \mu(\mathcal{C}^{(*,m_{-1})}) = \{W \in Im \mu | \ dim_{\mathbb{C}}(V^{0,1}_{J} \cap W) = m_{-1}\}.
\end{align*}
Analogous formulas hold for $\mathcal{C}_{(\leq r,-)}$ and $\mathcal{C}^{(m_{1},m_{-1})}$.
\end{prop}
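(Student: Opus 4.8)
The plan is to reduce each statement to a pointwise linear-algebra fact about a single $K \in \mathcal{C}$, since $\mu$ is a bijection onto its image and all the sets in question are defined by conditions on $W = \mu(K) = V^{0,1}_K$. Concretely, I would prove the two identities
\begin{equation*}
V^{1,0}_J \cap V^{0,1}_K = \ker(J+K)_{\mathbb{C}} \cap V^{1,0}_J, \qquad V^{0,1}_J \cap V^{0,1}_K = \ker(J-K)_{\mathbb{C}} \cap V^{0,1}_J,
\end{equation*}
where $(\cdot)_{\mathbb{C}}$ denotes the complexification of the relevant real kernel. From these, parts 2), 3) and then 1) all follow by counting dimensions.

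First I would record two elementary observations. The complexification of a real endomorphism has the same kernel dimension over $\mathbb{C}$ as the real kernel has over $\mathbb{R}$, so $\dim_{\mathbb{R}}\ker(J\pm K) = \dim_{\mathbb{C}}\ker(J\pm K)_{\mathbb{C}}$. Next, $\ker(J+K)$ is $J$-invariant: if $(J+K)v = 0$ then $Jv = -Kv$, and a direct computation gives $(J+K)Jv = -v + KJv = -v + v = 0$ (using $Jv=-Kv$ and $K^2=-1$). Hence $J$ restricts to a complex structure on $\ker(J+K)$, which is therefore even-dimensional, say of real dimension $2m_{1}$, and its complexification splits as $P^{+}\oplus P^{-}$ into the $\pm i$ eigenspaces of $J$, with $\dim_{\mathbb{C}}P^{\pm}=m_{1}$. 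The same argument applies verbatim to $\ker(J-K)$; this also recovers the splitting of Lemma \ref{LemCOM}.

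The core step is then to identify $P^{+}$ with $V^{1,0}_J \cap V^{0,1}_K$. The inclusion $V^{1,0}_J \cap V^{0,1}_K \subseteq P^{+}$ is immediate, since any $u$ in the intersection satisfies $Ju=iu$ and $Ku=-iu$, hence $(J+K)u=0$ with $u \in V^{1,0}_J$. For the reverse, if $u\in P^{+}$ then $Ju=iu$ and $(J+K)u=0$, which forces $Ku=-iu$, i.e. $u\in V^{0,1}_K$. This yields $\dim_{\mathbb{C}}(V^{1,0}_J \cap V^{0,1}_K)=m_{1}=\tfrac12\dim_{\mathbb{R}}\ker(J+K)$, which is exactly part 2); the identical argument with the $-i$ eigenspaces gives $\dim_{\mathbb{C}}(V^{0,1}_J \cap V^{0,1}_K)=\tfrac12\dim_{\mathbb{R}}\ker(J-K)$, which is part 3).

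Finally, for part 1) I would translate the rank condition using $\operatorname{Rank}(J+K)=2n-\dim_{\mathbb{R}}\ker(J+K)=2n-2\dim_{\mathbb{C}}(V^{1,0}_J\cap V^{0,1}_K)$, so that $\operatorname{Rank}(J+K)\le 2r$ is equivalent to $\dim_{\mathbb{C}}(V^{1,0}_J\cap W)\ge n-r$. The analogous formula for $\mathcal{C}_{(\le r,-)}$ uses $V^{0,1}_J$ in place of $V^{1,0}_J$, and the formula for $\mathcal{C}^{(m_{1},m_{-1})}$ follows by intersecting the descriptions in parts 2) and 3). I do not anticipate a genuine obstacle: the content is entirely the eigenvalue bookkeeping above, and the only point requiring care is the passage between real kernels and complex intersections—the factor of two and the automatic even-dimensionality of the kernels—which the $J$-invariance observation settles cleanly.
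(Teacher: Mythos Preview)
Your argument is correct: the identification $V^{1,0}_J \cap V^{0,1}_K = \ker(J+K)_{\mathbb{C}} \cap V^{1,0}_J$ (and its companion with signs swapped) is exactly the content of the proposition, and your eigenvalue bookkeeping proves it cleanly. The paper itself gives no proof of this proposition, treating it as an immediate consequence of the definitions; your write-up supplies precisely the elementary linear-algebra verification the paper omits, so there is nothing to compare.
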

We thus find that $\mu$ maps $\mathcal{C}^{(m_{1},*)}$, $\mathcal{C}^{(*,m_{-1})}$ and $\mathcal{C}_{(\leq r,\pm)}$ to open subsets of either $Gr^{(s)}$ or $Gr^{(\geq s)}$ in $Gr_{n}(V_{\mathbb{C}})$, where $Gr^{(s)}= \{W \in Gr_{n}(V_{\mathbb{C}}) | \ dim_{\mathbb{C}}(V^{0}\cap W)=s \}$ for some $V^{0} \in Gr_{n}(V_{\mathbb{C}})$. $Gr^{(s)}$ is a type of Schubert cell in $Gr_{n}(V_{\mathbb{C}})$ and we will now review some of its properties. 

$\mathbf{Gr^{(s)}:}$
Let $V$ be a real vector space of dimension $2n$ and let $V^{0} \in Gr_{n}(V_{\mathbb{C}})$. As above, define $Gr^{(s)}= \{W \in Gr_{n}(V_{\mathbb{C}}) | \ dim_{\mathbb{C}}(V^{0}\cap W)=s \}$. We will now introduce certain holomorphic charts for $Gr_{n}(V_{\mathbb{C}})$ that will, in particular, be used to show that $Gr^{(s)}$ is a complex submanifold.

To begin, let $W \in Gr^{(s)}$ and split \[V_{\mathbb{C}}=W \oplus W' = (W_{1} \oplus W_{2}) \oplus(W'_{1} \oplus W'_{2}),\]
where $W_{1}= V^{0} \cap W$, $W_{1} \oplus W'_{2}=V^{0}$ and $W_{2}$ and $W'_{1}$ are appropriate complements.

Now consider the corresponding holomorphic chart for $Gr_{n}(V_{\mathbb{C}})$ about $W$: 
\begin{align*}
 \rho:En&d(W,W')   \longrightarrow Gr_{n}(V_{\mathbb{C}}) \\ 
 A & \longrightarrow Graph(A) = \{w+Aw \in V_{\mathbb{C}} | \ w \in W \}. 
\end{align*}
We then have 

\begin{prop}
\label{PropGRAPH}
Let $A = \bordermatrix{~ & W_{1} & W_{2} \cr
                  W'_{1} & a_{1} & a_{2} \cr
                  W'_{2} & a_{3} & a_{4} \cr}  \in End(W,W')$. Then $Graph(A) \cap V^{0}= \{w + Aw \in V_{\mathbb{C}}| \ w \in ker a_{1} \} $ and its dimension equals that of  $ker a_{1}$.
\end{prop}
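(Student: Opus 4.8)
The plan is to unwind the definitions of $\mu$, the graph chart $\rho$, and the block decomposition of $A$, and then simply identify which graph vectors $w + Aw$ happen to land inside the reference subspace $V^0$. Since $\rho(A) = \mathrm{Graph}(A) = \{w + Aw \mid w \in W\}$, a vector of this form lies in $V^0$ exactly when $w + Aw \in V^0$. The whole computation rests on the fact that the splitting was rigged so that $V^0 = W_1 \oplus W'_2$, where $W_1 = V^0 \cap W \subseteq W$ and $W'_2 \subseteq W'$.

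First I would write a general $w \in W$ as $w = w_1 + w_2$ with $w_1 \in W_1$ and $w_2 \in W_2$, and compute $Aw$ using the block form. Because $A \in \mathrm{End}(W, W')$ maps into $W' = W'_1 \oplus W'_2$, and the blocks are $a_1 : W_1 \to W'_1$, $a_2 : W_2 \to W'_1$, $a_3 : W_1 \to W'_2$, $a_4 : W_2 \to W'_2$, the graph vector decomposes along $W_1 \oplus W_2 \oplus W'_1 \oplus W'_2$ as
\begin{equation*}
w + Aw = w_1 + w_2 + (a_1 w_1 + a_2 w_2) + (a_3 w_1 + a_4 w_2).
\end{equation*}
The next step is to impose membership in $V^0 = W_1 \oplus W'_2$. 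Since the four summands $W_1, W_2, W'_1, W'_2$ are independent, the $W_2$-component $w_2$ and the $W'_1$-component $a_1 w_1 + a_2 w_2$ must both vanish. From $w_2 = 0$ the $W'_1$-condition collapses to $a_1 w_1 = 0$, i.e. $w_1 \in \ker a_1$; and once $w_2 = 0$ the surviving summands $w_1 \in W_1$ and $a_3 w_1 \in W'_2$ both already lie in $V^0$, so no further constraint appears.

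This shows $\mathrm{Graph}(A) \cap V^0 = \{w + Aw \mid w \in \ker a_1\}$, which is the first assertion. For the dimension claim, I would observe that the map $w \mapsto w + Aw$ is injective on all of $W$ (it is the graph parametrization, so its $W$-component is the identity), hence it restricts to a linear isomorphism from $\ker a_1$ onto the intersection; therefore $\dim(\mathrm{Graph}(A) \cap V^0) = \dim \ker a_1$. The argument is essentially bookkeeping with the direct-sum decomposition, so there is no serious obstacle; the only point requiring care is verifying that the $W_2$-component of the graph vector is exactly $w_2$ with no contamination from $A$ (since $A$ lands in $W'$, not $W$), which is what forces $w_2 = 0$ rather than merely some combined condition. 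Making the independence of the four blocks explicit at the outset keeps that step clean.
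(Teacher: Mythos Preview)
Your proposal is correct and follows essentially the same approach as the paper: decompose $w = w_1 + w_2$, expand $w + Aw$ along $W_1 \oplus W_2 \oplus W'_1 \oplus W'_2$, and read off that membership in $V^0 = W_1 \oplus W'_2$ forces $w_2 = 0$ and $a_1 w_1 = 0$. The paper's version is terser (it packages the two vanishing conditions as the single equation $w_2 + a_1 w_1 + a_2 w_2 = 0$ in $W_2 \oplus W'_1$) and leaves the injectivity of $w \mapsto w + Aw$ implicit, but the content is identical.
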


\begin{proof}
Let $w+Aw \in Graph(A)$ and set $w=w_{1} + w_{2} \in W_{1} \oplus W_{2}$. Then $w+Aw \in V^{0}$ if and only if $ w_{2} +a_{1}w_{1} + a_{2}w_{2}=0$, which in turn is equivalent to $w=w_{1} \in ker a_{1}$.
\end{proof}

If we define 
\[End_{t}(W,W')= \{ \bordermatrix{~ & W_{1} & W_{2} \cr
                  W'_{1} & a_{1} & a_{2} \cr
                  W'_{2} & a_{3} & a_{4} \cr} \in End(W,W') | \ dimKera_{1}=t \}\]

we then have 

\begin{cor} For each $t \in \{0,1,...,s\}$, the map 
\begin{align*}
 End_{t}&(W,W') \longrightarrow Gr^{(t)} \cap Im\rho \\ 
 A & \longrightarrow Graph(A) 
\end{align*}
is well defined and bijective. Moreover, when $t=s$ this map gives a holomorphic chart for $Gr^{(s)}$ about $W$. 
\end{cor}

Using the above corollary, it is straightforward to show:

\begin{cor}
\label{CorGRS}
\begin{align*}
& 1)\ Gr^{(s)} \text{ is a complex submanifold of   }  Gr_{n}(V_{\mathbb{C}}) \text{ of dimension } n^{2} - s^{2}.\\
& 2)\ \overline{Gr^{(s)}} \text { equals } Gr^{(\geq s)} \text{ and is a holomorphic subvariety of dimension } n^{2} - s^{2}. 
\end{align*}
\end{cor}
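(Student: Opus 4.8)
The plan is to deduce both statements directly from the preceding Corollary, which already furnishes an explicit holomorphic chart for $Gr^{(s)}$ about any point $W$, modeled on the space $End_{s}(W,W')$. First I would establish part 1). To show $Gr^{(s)}$ is a complex submanifold it suffices to verify that, in each chart $\rho$ centered at a point $W \in Gr^{(s)}$, the locus $End_{s}(W,W') \subset End(W,W')$ is cut out as a submanifold of the expected dimension. By Proposition \ref{PropGRAPH} the condition $Graph(A) \in Gr^{(t)}$ is exactly $\dim\ker a_{1}=t$, where $a_{1}\in End(W_{1},W'_{1})$ is the upper-left block and $\dim W_{1}=\dim W'_{1}=s$ (since $W\in Gr^{(s)}$, so that $W_{1}=V^{0}\cap W$ has dimension $s$, and $W'_{1}$ is its chosen complement of matching size). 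The key point is that the maximal-kernel stratum $t=s$ corresponds to $a_{1}=0$: an $s\times s$ matrix has $s$-dimensional kernel precisely when it vanishes identically. Hence in this chart
\begin{equation*}
Gr^{(s)} \cap Im\rho = \{ A \in End(W,W') \mid a_{1}=0 \},
\end{equation*}
which is the linear subspace obtained by deleting the $a_{1}$-block. Its dimension is $n^{2}-s^{2}$, since $\dim End(W,W')=n\cdot n=n^{2}$ (as $\dim W=\dim W'=n$) while the deleted block $a_{1}\in End(W_{1},W'_{1})$ contributes $s^{2}$. As these linear slices patch together over the chart cover from the Corollary, $Gr^{(s)}$ is a complex submanifold of $Gr_{n}(V_{\mathbb{C}})$ of dimension $n^{2}-s^{2}$.

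For part 2) I would argue that $Gr^{(\geq s)}=\bigcup_{t\geq s}Gr^{(t)}$ is both closed and equal to the closure of $Gr^{(s)}$. Closedness is the semicontinuity statement: the rank of $a_{1}$ cannot increase under small perturbations, so $\dim\ker a_{1}$ cannot decrease, which shows each $Gr^{(\geq s)}$ is closed (equivalently, that the condition $\operatorname{rank} a_{1}\leq s-s=0$ is replaced by $\operatorname{rank} a_{1}\leq s-t$, a closed determinantal condition). To identify $Gr^{(\geq s)}$ with $\overline{Gr^{(s)}}$, I would show every point of $Gr^{(t)}$ with $t>s$ lies in the closure of $Gr^{(s)}$ by exhibiting, in a chart centered at such a point, a holomorphic one-parameter family $A_{\lambda}$ with $a_{1}(A_{\lambda})$ of full rank $s$ for $\lambda\neq 0$ and $a_{1}(A_{0})=0$; deforming the appropriate block of $a_{1}$ to a generic matrix of rank exactly $s$ does this. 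That $\overline{Gr^{(s)}}$ is a holomorphic subvariety follows since it is the zero locus of the $(s+1)\times(s+1)$ minors of $a_{1}$ in each chart. Its dimension equals $\dim Gr^{(s)}=n^{2}-s^{2}$ because the closure of an irreducible (or pure-dimensional) locally closed complex submanifold has the same dimension.

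The main obstacle, and the step warranting the most care, is the identification $\overline{Gr^{(s)}}=Gr^{(\geq s)}$—specifically the inclusion $Gr^{(\geq s)}\subseteq\overline{Gr^{(s)}}$, i.e. showing that the higher strata really are limits of the principal stratum rather than merely closed subsets containing it. The containment $\overline{Gr^{(s)}}\subseteq Gr^{(\geq s)}$ is the easy half, being just the closedness of $Gr^{(\geq s)}$ together with $Gr^{(s)}\subseteq Gr^{(\geq s)}$. The reverse inclusion requires the explicit deformation argument above; once one works in a chart about a point of $Gr^{(t)}$ and writes $a_{1}$ as an $s\times s$ block (noting that at such a point the relevant block has kernel of dimension $t>s$ before perturbation, so one perturbs to cut the kernel down to exactly $s$), it reduces to the elementary fact that matrices of rank exactly $s$ are dense in the determinantal variety of matrices of rank $\leq s$. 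I would also remark that purity of dimension of the closure is what guarantees the stated dimension count, and this follows because $Gr^{(s)}$ is connected (indeed a homogeneous space under the parabolic stabilizing $V^{0}$) hence irreducible.
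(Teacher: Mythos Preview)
Your approach is the natural elaboration of the paper's, which simply declares the corollary ``straightforward'' from the preceding chart description and gives no further argument. Part 1) is correct as written.

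In Part 2) the strategy is right but two details are misstated and would trip up a careful reader. First, the semicontinuity is backwards: rank is \emph{lower} semicontinuous, so it can jump \emph{up} under small perturbation; what you actually want is that $\dim\ker a_{1}$ is upper semicontinuous, making $\{\dim\ker a_{1}\geq s\}$ closed in each chart and hence $Gr^{(\geq s)}$ closed. Second, in a chart centered at $W\in Gr^{(t)}$ with $t>s$, the block $a_{1}$ is $t\times t$ (not $s\times s$, since now $W_{1}=V^{0}\cap W$ has dimension $t$), and landing in $Gr^{(s)}$ requires $\dim\ker a_{1}=s$, i.e.\ $\operatorname{rank}a_{1}=t-s$, not ``full rank $s$'' or ``rank exactly $s$'' as you wrote. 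Taking for instance $a_{1}(\lambda)=\lambda\cdot\operatorname{diag}(1,\dots,1,0,\dots,0)$ with $t-s$ ones and $s$ zeros gives the desired one-parameter family exhibiting $W$ as a limit of points in $Gr^{(s)}$. With these two corrections your argument goes through.
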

\textbf{Properties of the $\mathcal{C}^{\delta}$}:
By then combining Proposition \ref{PropCGR} and Corollary \ref{CorGRS}, we obtain

\begin{prop}
\label{PropDIMC}
\begin{align*}
&1) \ \mathcal{C}^{(m_{1},*)} \text{ is a complex submanifold of  } \mathcal{C} \text{ of dimension } n^{2}-m_{1}^{2}. \\
& 2) \  \mathcal{C}_{(\leq r, \pm )} \text{ is a holomorphic subvariety of dimension } n^{2}-(n-r)^{2}.
\end{align*}

An analogous result holds for $\mathcal{C}^{(*,m_{-1})}$. 
\end{prop}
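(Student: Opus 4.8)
The plan is to deduce Proposition \ref{PropDIMC} directly by transporting the structural results on $Gr^{(s)}$ (Corollary \ref{CorGRS}) back to $\mathcal{C}$ through the holomorphic embedding $\mu$ and the identifications of Proposition \ref{PropCGR}. The key point is that $\mu$ is a holomorphic embedding whose image is \emph{open} in $Gr_n(V_{\mathbb{C}})$; open holomorphic embeddings preserve both the notion of complex submanifold and the notion of holomorphic subvariety, as well as complex dimensions. So the whole argument amounts to matching up the right Schubert cell with the right $\mathcal{C}^\delta$ and reading off the dimension count already established.

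For part 1), I would argue as follows. Fix $V^0 = V^{1,0}_J \in Gr_n(V_{\mathbb{C}})$ and apply Proposition \ref{PropCGR}(2), which identifies $\mu(\mathcal{C}^{(m_1,*)})$ with $\{W \in Im\,\mu \mid \dim_{\mathbb{C}}(V^{1,0}_J \cap W) = m_1\}$. By definition of $Gr^{(s)}$ with $V^0 = V^{1,0}_J$, this set is exactly $Gr^{(m_1)} \cap Im\,\mu$. By Corollary \ref{CorGRS}(1), $Gr^{(m_1)}$ is a complex submanifold of $Gr_n(V_{\mathbb{C}})$ of dimension $n^2 - m_1^2$. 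Intersecting with the open set $Im\,\mu$ keeps it a complex submanifold of the same dimension, and since $\mu$ is a biholomorphism onto its (open) image, $\mathcal{C}^{(m_1,*)} = \mu^{-1}(Gr^{(m_1)} \cap Im\,\mu)$ is a complex submanifold of $\mathcal{C}$ of dimension $n^2 - m_1^2$. The analogous statement for $\mathcal{C}^{(*,m_{-1})}$ is identical, using $V^0 = V^{0,1}_J$ and Proposition \ref{PropCGR}(3), giving dimension $n^2 - m_{-1}^2$.

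For part 2), the subvariety statement proceeds in the same spirit but using $Gr^{(\geq s)}$. By Proposition \ref{PropCGR}(1), $\mu(\mathcal{C}_{(\leq r,+)})$ is identified with $\{W \in Im\,\mu \mid \dim_{\mathbb{C}}(V^{1,0}_J \cap W) \geq n-r\}$, which with $V^0 = V^{1,0}_J$ is precisely $Gr^{(\geq n-r)} \cap Im\,\mu$. By Corollary \ref{CorGRS}(2), $Gr^{(\geq n-r)} = \overline{Gr^{(n-r)}}$ is a holomorphic subvariety of dimension $n^2 - (n-r)^2$, and intersecting with the open $Im\,\mu$ and pulling back by the biholomorphism $\mu$ shows $\mathcal{C}_{(\leq r,+)}$ is a holomorphic subvariety of $\mathcal{C}$ of the same dimension. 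The minus case is handled by the analogous formula for $\mathcal{C}_{(\leq r,-)}$ noted at the end of Proposition \ref{PropCGR}, using $V^0 = V^{0,1}_J$.

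I do not expect a serious obstacle here, since all the substantive work—building the graph charts, computing $\dim Gr^{(s)} = n^2 - s^2$, and identifying the closure with $Gr^{(\geq s)}$—has already been done in Corollary \ref{CorGRS}. The only point that genuinely requires care is the transport of the ``holomorphic subvariety'' and ``complex submanifold'' properties through $\mu$: one must invoke that $\mu$ is a holomorphic embedding with \emph{open} image so that these local analytic conditions, together with the dimension, are preserved under $\mu^{-1}$ and under intersection with the open set $Im\,\mu$. This is exactly why the lemma emphasizing the openness of $Im\,\mu$ was stated, and it is the hinge on which this proposition—otherwise a bookkeeping corollary—turns.
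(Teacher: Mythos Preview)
Your proposal is correct and follows exactly the paper's approach: the paper simply states that Proposition \ref{PropDIMC} is obtained ``by then combining Proposition \ref{PropCGR} and Corollary \ref{CorGRS},'' and you have spelled out precisely that combination, including the role of the openness of $Im\,\mu$ in transporting the submanifold/subvariety and dimension statements.
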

Let us now consider some properties of $\mathcal{C}^{(m_{1},m_{-1})}= \mathcal{C}^{(m_{1},*)}\cap \mathcal{C}^{(*,m_{-1})}$. 
\begin{prop} $\mathcal{C}^{(m_{1},m_{-1})}$ is nonempty if and only if $m_{1} + m_{-1} \leq n$.
\end{prop}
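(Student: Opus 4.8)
The plan is to translate the nonemptiness question into a linear-algebra existence problem via the embedding $\mu$, and then exhibit an explicit subspace. Recall from Proposition \ref{PropCGR} that, writing $P := V^{1,0}_{J}$ and $\overline{P} := V^{0,1}_{J}$ (each of complex dimension $n$, with $V_{\mathbb{C}} = P \oplus \overline{P}$ and $\overline{P}$ the conjugate of $P$), one has $\mu(\mathcal{C}^{(m_{1},m_{-1})}) = \{ W \in \mathrm{Im}\,\mu \mid \dim_{\mathbb{C}}(P \cap W) = m_{1}, \ \dim_{\mathbb{C}}(\overline{P} \cap W) = m_{-1}\}$, where $\mathrm{Im}\,\mu$ is precisely the set of $W \in Gr_{n}(V_{\mathbb{C}})$ with $W \cap \overline{W} = 0$ (equivalently $W \oplus \overline{W} = V_{\mathbb{C}}$). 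Thus I must decide for which $(m_{1},m_{-1})$ there exists an $n$-dimensional $W$ satisfying $W \cap \overline{W} = 0$, $\dim_{\mathbb{C}}(P \cap W) = m_{1}$, and $\dim_{\mathbb{C}}(\overline{P} \cap W) = m_{-1}$.

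Necessity is immediate and does not even use the complex-structure condition: since $P \cap \overline{P} = 0$, the subspaces $P \cap W$ and $\overline{P} \cap W$ of $W$ meet only in $0$, so $(P \cap W) \oplus (\overline{P} \cap W) \subseteq W$ forces $m_{1} + m_{-1} \leq \dim_{\mathbb{C}} W = n$.

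For sufficiency, assume $m_{1} + m_{-1} \leq n$ and set $k := n - m_{1} - m_{-1} \geq 0$. I would choose a basis $e_{1}, \ldots, e_{m_{1}}, f_{1}, \ldots, f_{m_{-1}}, g_{1}, \ldots, g_{k}$ of $P$, so that their conjugates form a basis of $\overline{P}$, and define
\[
W = \big\langle e_{1}, \ldots, e_{m_{1}}, \ \overline{f_{1}}, \ldots, \overline{f_{m_{-1}}}, \ g_{1} + 2\overline{g_{1}}, \ldots, g_{k} + 2\overline{g_{k}} \big\rangle.
\]
A short component computation, splitting every element of $W$ into its $P$- and $\overline{P}$-parts, then shows that $P \cap W$ is exactly the span of the $e_{i}$ (so $\dim = m_{1}$) and $\overline{P} \cap W$ is exactly the span of the $\overline{f_{j}}$ (so $\dim = m_{-1}$); the mixed vectors $g_{l} + 2\overline{g_{l}}$ drop out of both intersections because they have nonzero components in each factor.

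The one point requiring care — and the only place where membership in $\mathrm{Im}\,\mu$ (rather than in an arbitrary Grassmannian) matters — is verifying $W \oplus \overline{W} = V_{\mathbb{C}}$, i.e. $W \cap \overline{W} = 0$. I would check this by pairing the displayed basis of $W$ against the conjugate basis of $\overline{W}$ and grouping the $2n$ vectors according to the conjugation-invariant two-dimensional blocks $\langle g_{l}, \overline{g_{l}}\rangle$; on each such block the relevant $2\times 2$ determinant is $1 - |\lambda|^{2}$ with $\lambda = 2$, which is nonzero exactly because $|\lambda| \neq 1$, while the $e$- and $f$-blocks are visibly independent. Hence the $2n$ vectors are linearly independent and $W \in \mathrm{Im}\,\mu$, producing the desired $K \in \mathcal{C}$. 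I expect this last verification — guaranteeing that the constructed $n$-plane corresponds to a genuine complex structure and not merely to a point of the Grassmannian — to be the main (though still routine) obstacle; any $\lambda$ with $\lambda \neq 0$ and $|\lambda| \neq 1$ serves equally well, the conditions $\lambda \neq 0$ and $|\lambda| \neq 1$ being responsible respectively for the correct intersection dimensions and for $W \cap \overline{W} = 0$.
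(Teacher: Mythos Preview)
Your proof is correct. Both you and the paper argue necessity by a dimension count (the paper via $\dim\ker[J,K]=2(m_{1}+m_{-1})\leq 2n$ using Lemma~\ref{LemCOM}, you via the direct sum $(P\cap W)\oplus(\overline{P}\cap W)\subseteq W$) and sufficiency by an explicit construction, so the overall strategy is the same. The difference is only in the packaging of the construction: the paper builds $K$ directly on the real vector space $V$ by choosing a $J$-invariant splitting $V=\bigoplus_i\langle v_i,Jv_i\rangle\oplus V_1\oplus V_{-1}$ and setting $Kv_i=-rJv_i$, $KJv_i=r^{-1}v_i$ (with $r\in\mathbb{R}\setminus\{0,\pm1\}$) on the first summand and $K=\mp J$ on $V_{\pm1}$, whereas you work on the complexified side via $\mu$, building the $(-i)$-eigenspace $W$ instead of $K$ itself. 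These are the same construction viewed through the biholomorphism $\mu$: your vectors $g_l+\lambda\overline{g_l}$ correspond exactly to the paper's two-dimensional blocks with parameter $r$, and your condition $|\lambda|\neq 1$ (ensuring $W\cap\overline{W}=0$) plays the same role as the paper's $r\neq\pm1$. Your route has the mild advantage that the intersection dimensions $m_1$ and $m_{-1}$ are read off immediately from the basis of $W$, while the paper's real construction makes the complex structure $K$ more tangible but leaves the verification $K\in\mathcal{C}^{(m_1,m_{-1})}$ to the reader.
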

\begin{proof}
If $K \in \mathcal{C}^{(m_{1},m_{-1})}$ then by Lemma \ref{LemCOM}, $dimKer[J,K]= 2(m_{1}+m_{-1}) \leq 2n.$

Conversely, given $m_{1}, m_{-1} \in \mathbb{Z}_{\geq 0}$ such that $m_{1} + m_{-1} \leq n$, we will define a $K \in \mathcal{C}^{(m_{1},m_{-1})}$ as follows. First consider the $J$-invariant splitting: 
\[V= \bigoplus_{i\in \{1,2,...,l\}} <v_{i},Jv_{i}> \oplus V_{1} \oplus V_{-1},\]
where $dimV_{1}=2m_{1}$ and $dimV_{-1}=2m_{-1}$.

Now define $K \in \mathcal{C}$ by setting 
\begin{itemize}
\item $Kv_{i}= -rJv_{i}$ and $KJv_{i}=r^{-1}v_{i}$, where $r \in \mathbb{R} - \{0,\pm1\}$
\item $Kw_{1}=-Jw_{1}$ and $Kw_{-1}=Jw_{-1}$, $\forall \ w_{1} \in V_{1}$ and $w_{-1} \in V_{-1}$.   
\end{itemize}
So defined, one may check that $K$ is indeed an element of $\mathcal{C}^{(m_{1},m_{-1})}$.
\end{proof}

Supposing that $m_{1} + m_{-1} \leq n$, we will now show that $\mathcal{C}^{(m_{1},*)}$ and $\mathcal{C}^{(*,m_{-1})}$ intersect transversally, thus proving, in particular, that $\mathcal{C}^{(m_{1},m_{-1})}$  is a complex manifold. To show this, we will first describe $T_{K}\mathcal{C}^{(m_{1},*)}$ and $T_{K} \mathcal{C}^{(*,m_{-1})}$ in $T_{K}\mathcal{C}= \mathfrak{gl}_{ \{K\}}:= \{A \in \mathfrak{gl}(V)| \ \{A,K\}=0\}.$
\begin{prop}
\label{PropTANC}
\begin{align*}
1)& \ T_{K}\mathcal{C}^{(m_{1},*)}= \{A \in \mathfrak{gl}_{\{K\}} | \ A: Ker(J+K) \rightarrow Im(J+K) \} \\
2)& \ T_{K}\mathcal{C}^{(*,m_{-1})}= \{A \in \mathfrak{gl}_{\{K\}} | \ A: Ker(J-K) \rightarrow Im(J-K) \} \\
3)& \ \text{If } m_{1} + m_{-1} \leq n \text{ then } \mathcal{C}^{(m_{1},*)} \text{ and } \mathcal{C}^{(*,m_{-1})} \text{ intersect transversally. }
\end{align*}
\end{prop}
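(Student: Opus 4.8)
The plan is to compute the three tangent spaces by differentiating the defining conditions of the submanifolds and then to deduce transversality from an explicit understanding of how these subspaces sit inside $\mathfrak{gl}_{\{K\}}$. For parts 1) and 2), I would work with a smooth path $K_t \in \mathcal{C}$ with $K_0 = K$ and derivative $A = \dot K_0 \in \mathfrak{gl}_{\{K\}}$. The characterization in Proposition \ref{PropCGR} says that lying in $\mathcal{C}^{(m_1,*)}$ is equivalent to the condition $\dim_{\mathbb{C}}(V^{1,0}_J \cap V^{0,1}_{K_t}) = m_1$, and $V^{1,0}_J \cap V^{0,1}_K$ is (the complexification of) $\ker(J+K)$. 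So the natural approach is to differentiate the map $K_t \mapsto (J+K_t)$ restricted to $\ker(J+K)$: a tangent vector $A$ preserves the dimension of the kernel precisely when $A$ maps $\ker(J+K)$ into $\mathrm{Im}(J+K)$, by the standard first-order perturbation fact that $\dim\ker$ of a family $L_t$ is locally constant exactly when $\dot L_0$ sends $\ker L_0$ into $\mathrm{Im}\,L_0$. Here $L_t = J + K_t$ so $\dot L_0 = A$, which gives the stated condition. Part 2) is identical with $J-K$ in place of $J+K$.

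For part 3), transversality means $T_K\mathcal{C}^{(m_1,*)} + T_K\mathcal{C}^{(*,m_{-1})} = \mathfrak{gl}_{\{K\}}$. Here I would exploit the orthogonal, $J,K$-invariant splitting from the Background discussion and Proposition \ref{PropM'P}, namely $V = \mathrm{Im}[J,K] \oplus \ker(J+K) \oplus \ker(J-K)$, together with the observation that $\mathrm{Im}(J+K) = \mathrm{Im}[J,K] \oplus \ker(J-K)$ and $\mathrm{Im}(J-K) = \mathrm{Im}[J,K] \oplus \ker(J+K)$ (since on $\ker(J-K)$ we have $J+K = 2J$ invertible, and symmetrically). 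The condition defining $T_K\mathcal{C}^{(m_1,*)}$ only constrains the action of $A$ on the block $\ker(J+K)$, requiring its image to avoid the $\ker(J+K)$-summand; dually for the other tangent space. Writing an arbitrary $A \in \mathfrak{gl}_{\{K\}}$ in block form with respect to the three-fold splitting, I would show that $A$ decomposes as a sum $A = A' + A''$ with $A'$ satisfying the first constraint and $A''$ the second, by splitting off the two potentially obstructing blocks (the $\ker(J+K) \to \ker(J+K)$ block and the $\ker(J-K) \to \ker(J-K)$ block) into the appropriate summand.

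The hypothesis $m_1 + m_{-1} \le n$ enters precisely to guarantee that $\mathcal{C}^{(m_1,m_{-1})}$ is nonempty (so the transversality statement is not vacuous) and, more substantively, that the dimensions are consistent: by Proposition \ref{PropDIMC} the two submanifolds have codimensions $m_1^2$ and $m_{-1}^2$, and transversality forces $\mathcal{C}^{(m_1,m_{-1})}$ to have codimension $m_1^2 + m_{-1}^2$, which must not exceed $\dim\mathcal{C} = n^2$. The main obstacle I anticipate is verifying that the off-diagonal blocks of $A$ involving $\mathrm{Im}[J,K]$ can be freely distributed between the two summands while respecting the anticommutation constraint $\{A,K\}=0$; this requires checking that the constraint is compatible with the block decomposition induced by the splitting, which it is because the splitting is $K$-invariant. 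Once this compatibility is confirmed, the sum-decomposition $A = A' + A''$ is a direct computation, and transversality follows immediately.
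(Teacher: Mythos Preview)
Your approach to parts 1) and 2) is essentially the paper's: take a curve $K(t)$ in $\mathcal{C}^{(m_{1},*)}$, extend a vector $v\in\ker(J+K)$ to a curve $v(t)\in\ker(J+K(t))$, differentiate $(J+K(t))v(t)=0$ to obtain $K'v\in\operatorname{Im}(J+K)$, and close the argument by a dimension count against Proposition~\ref{PropDIMC}. One small caveat: the perturbation fact you quote is only a \emph{necessary} condition for a curve to remain in the constant-rank locus, so it gives just one inclusion; equality of the two subspaces in 1) really does need the dimension comparison, exactly as in the paper.

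For part 3) there is a gap. You invoke Proposition~\ref{PropM'P} and the ``orthogonal, $J,K$-invariant splitting'' $V=\operatorname{Im}[J,K]\oplus\ker(J+K)\oplus\ker(J-K)$, but that proposition and the surrounding ``Background'' paragraph are stated in the \emph{metric} setting $J,K\in\mathcal{T}(V,g)$, whereas Proposition~\ref{PropTANC} concerns $\mathcal{C}(V)$ with no metric in sight. The Remark immediately after Proposition~\ref{PropREP} warns explicitly that those metric facts can fail for general $J,K\in\mathcal{C}(V)$, so neither the orthogonal decomposition nor your identity $\operatorname{Im}(J+K)=\operatorname{Im}[J,K]\oplus\ker(J-K)$ is available here. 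The repair is easy and your block idea survives intact: since $\ker(J+K)$ and $\ker(J-K)$ are $K$-invariant and meet trivially (Lemma~\ref{LemCOM}), choose any $K$-invariant complement $U$ so that $V=\ker(J+K)\oplus\ker(J-K)\oplus U$, let $P$ be the projection onto $\ker(J+K)$ along $\ker(J-K)\oplus U$, and for $A\in\mathfrak{gl}_{\{K\}}$ set $A'=A(1-P)$ and $A''=AP$. Both lie in $\mathfrak{gl}_{\{K\}}$ because $P$ commutes with $K$; moreover $A'$ vanishes on $\ker(J+K)$ and $A''$ vanishes on $\ker(J-K)$, so $A'\in T_{K}\mathcal{C}^{(m_{1},*)}$, $A''\in T_{K}\mathcal{C}^{(*,m_{-1})}$, and $A=A'+A''$. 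This is presumably what the paper means by ``straightforward.''
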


\begin{proof}
To prove Part 1) of the proposition, let $K \in \mathcal{C}^{(m_{1},*)}$ and $v \in ker(J+K)$ and suppose $K(t)$ is a curve in $\mathcal{C}^{(m_{1},*)}$ that satisfies $K(0)=K$.

As the rank of $J+K(t)$ is independent of $t$, one may extend $v$ to a curve $v(t)$ in $V$ so that 
\[(J+K(t))v(t)=0. \]
Taking $\frac{d}{dt}|_{t=0}$ of the above expression gives  
\[K'v=-(J+K)v'(0), \]
which shows that 
\[T_{K}\mathcal{C}^{(m_{1},*)} \subset \{A \in \mathfrak{gl}_{\{K\}} | \ A: Ker(J+K) \rightarrow Im(J+K) \}. \]
That these subspaces are indeed equal then follows from the fact that they have the same dimensions.

The proof of Part 2) of the proposition is similar and that of Part 3) is straightforward.

\end{proof}

\begin{cor} For $m_{1}+m_{-1}\leq n$,
$\mathcal{C}^{(m_{1},m_{-1})}$ is a complex submanifold of dimension $n^{2} - m_{1}^{2}- m^{2}_{-1}$.
\end{cor}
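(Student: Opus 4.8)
The plan is to deduce the corollary directly from the transversality statement of Proposition \ref{PropTANC} Part 3) together with the dimension count of Proposition \ref{PropDIMC}. The strategy rests on the general fact that when two complex submanifolds of an ambient complex manifold intersect transversally, their intersection is again a complex submanifold whose codimension is the sum of the two codimensions. Here the ambient manifold is $\mathcal{C}$, which has complex dimension $n^{2}$, and the two submanifolds are $\mathcal{C}^{(m_{1},*)}$ and $\mathcal{C}^{(*,m_{-1})}$, whose intersection is by definition $\mathcal{C}^{(m_{1},m_{-1})}$.

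First I would invoke Proposition \ref{PropTANC} Part 3), whose hypothesis $m_{1}+m_{-1}\leq n$ is precisely the standing assumption of the corollary, to conclude that $\mathcal{C}^{(m_{1},*)}$ and $\mathcal{C}^{(*,m_{-1})}$ meet transversally. Transversality of complex submanifolds immediately gives that $\mathcal{C}^{(m_{1},m_{-1})}$ is a complex submanifold of $\mathcal{C}$. Next I would compute its dimension via codimensions. From Proposition \ref{PropDIMC} Part 1), the complex codimension of $\mathcal{C}^{(m_{1},*)}$ in $\mathcal{C}$ is $n^{2}-(n^{2}-m_{1}^{2})=m_{1}^{2}$, and the analogous result for $\mathcal{C}^{(*,m_{-1})}$ gives codimension $m_{-1}^{2}$. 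By transversality the codimension of the intersection is the sum $m_{1}^{2}+m_{-1}^{2}$, whence
\begin{equation*}
\dim_{\mathbb{C}}\mathcal{C}^{(m_{1},m_{-1})}= n^{2}-(m_{1}^{2}+m_{-1}^{2})=n^{2}-m_{1}^{2}-m_{-1}^{2},
\end{equation*}
which is exactly the claimed dimension.

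I do not expect any genuine obstacle, since the corollary is essentially a bookkeeping consequence of the two preceding results; it is phrased as a corollary precisely because the substantive work—the tangent space descriptions in \eqref{EqSP}'s spirit and the verification of the transversal meeting of the linear tangent models $\{A: \mathrm{Ker}(J+K)\to \mathrm{Im}(J+K)\}$ and $\{A:\mathrm{Ker}(J-K)\to\mathrm{Im}(J-K)\}$—was already carried out in Proposition \ref{PropTANC}. The only point warranting a sentence of care is to confirm that the intersection is nonempty (so that the dimension statement is not vacuous), which is guaranteed by the preceding nonemptiness proposition under the same hypothesis $m_{1}+m_{-1}\leq n$, and to note that transversality here is the transversality of two distributions inside $\mathfrak{gl}_{\{K\}}=T_{K}\mathcal{C}$ whose intersection and sum behave correctly fiberwise. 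With those remarks the dimension arithmetic closes the argument.
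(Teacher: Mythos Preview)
Your proposal is correct and matches the paper's intended argument: the corollary is stated without proof immediately after Proposition~\ref{PropTANC}, precisely because it follows from the transversality in Part~3) together with the dimension formulas of Proposition~\ref{PropDIMC} via the standard fact that a transverse intersection of complex submanifolds is a complex submanifold of the expected codimension. Your additional remarks on nonemptiness and the fiberwise tangent models are appropriate caveats but not needed beyond what the paper already records.
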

Since by Proposition \ref{PropMS}  \[\mathcal{C}_{s}= \bigcup_{m_{1}+m_{-1}=n-s} \mathcal{C}^{(m_{1},m_{-1})},\]
it follows that $\mathcal{C}_{s}$ is a disjoint union of complex submanifolds of varied dimensions.
 
     Lastly note that by then using the different $\mathcal{C}^{\#}$ we can stratify $\mathcal{C}$ in several ways, i.e., they can be used to decompose $\mathcal{C}$ into disjoint unions of complex submanifolds. 
     
\subsubsection{$\mathcal{T}$-case}
Let $(V,g)$ be a $2n$ dimensional real vector space with a positive definite metric and let $\mathcal{T}:=\mathcal{T}(V,g)$ be the associated twistor space. Similar to the $\mathcal{C}-$case of the previous section, we have

\begin{prop}
\label{PropTHSJ}
Given $J \in \mathcal{T}$, the following are holomorphic subvarieties of $\mathcal{T}$:
\begin{align*}
 1)  \ & \mathcal{T}_{\leq s}(J)= \{K \in \mathcal{T} |\ Rank [J,K] \leq 2s\}\\
 2) \ & \mathcal{T}_{(\leq r,\pm)}(J)= \{K \in \mathcal{T} |\ Rank (J\pm K) \leq 2r\}.
\end{align*}
\end{prop}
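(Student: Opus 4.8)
The plan is to mirror the $\mathcal{C}$-case of the previous section, realizing $\mathcal{T}_{\leq s}(J)$ and $\mathcal{T}_{(\leq r,\pm)}(J)$ as the degeneracy loci produced by Theorem \ref{ThmPHS}, now instantiated in the metric setting. Concretely, I would regard $(V,g)$ as a bundle over a point and pull it back to $\mathcal{T}$: let $E=\mathcal{T}\times V \longrightarrow \mathcal{T}$, equipped with the constant fiberwise metric $g$ and the trivial connection $d$. Since $g$ is constant along $\mathcal{T}$, $d$ is a metric connection, and its curvature vanishes and so is in particular of type $(1,1)$; hence $\mathcal{J}^{(d,I_{\mathcal{T}})}$ is a complex structure on $\mathcal{C}(E)$ and, by the metric proposition stating that $\mathcal{T}$ is a complex submanifold of $(\mathcal{C},\mathcal{J}^{(\nabla,I)})$, the subspace $\mathcal{T}(E,g)$ is a complex submanifold of it.

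Next I would exhibit the two sections needed to invoke Theorem \ref{ThmPHS}. For the holomorphic section, take $\phi$ defined by $\phi|_{K}=K$: since each $K\in\mathcal{T}$ is $g$-compatible, $\phi$ lands in $\mathcal{T}(E,g)$, and by Proposition \ref{PropTHS} it is a holomorphic section of $(\mathcal{C}(E),\mathcal{J}^{(d,I_{\mathcal{T}})})$. For the parallel section, take the constant section equal to the fixed $J\in\mathcal{T}$, so that $dJ=0$ and $J$ is likewise a section of $\mathcal{T}(E,g)$. Fiberwise over $K\in\mathcal{T}$, the bundle maps $[J,\phi]$, $J+\phi$ and $J-\phi$ are precisely the endomorphisms $[J,K]$, $J+K$ and $J-K$ of $V$, so the loci $M_{\leq s}$ and $M_{(\leq r,\pm)}$ of Theorem \ref{ThmPHS} coincide with $\mathcal{T}_{\leq s}(J)$ and $\mathcal{T}_{(\leq r,\pm)}(J)$. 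Theorem \ref{ThmPHS} then delivers the claim.

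The one point requiring care, and the main (modest) obstacle, is the holomorphicity of $\phi$ as a section of the \emph{metric} twistor space rather than merely of $\mathcal{C}(E)$. Here I would use that $\mathcal{T}(E,g)$ is a complex submanifold of $\mathcal{C}(E)$: a map into an ambient complex manifold whose image lies in a complex submanifold, and which is holomorphic into the ambient space, is automatically holomorphic into the submanifold. Thus the holomorphicity furnished by Proposition \ref{PropTHS} descends to $\mathcal{T}(E,g)$, and $\phi$ and $J$ satisfy the hypotheses of Theorem \ref{ThmPHS} verbatim.

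Alternatively, one could avoid re-running the construction and argue by restriction: the case $M=\{pt\}$, $E=V$ of the same metric proposition shows that $\mathcal{T}=\mathcal{T}(V,g)$ is a complex submanifold of $\mathcal{C}=\mathcal{C}(V)$, and since the defining rank conditions are identical one has $\mathcal{T}_{\leq s}(J)=\mathcal{C}_{\leq s}(J)\cap\mathcal{T}$ and $\mathcal{T}_{(\leq r,\pm)}(J)=\mathcal{C}_{(\leq r,\pm)}(J)\cap\mathcal{T}$. Because the intersection of a holomorphic subvariety with a complex submanifold is again a holomorphic subvariety (locally, restrict the defining holomorphic functions to the submanifold), the already-established $\mathcal{C}$-case immediately yields the $\mathcal{T}$-case.
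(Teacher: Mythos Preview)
Your proposal is correct and matches the paper's own approach: the paper states the proposition by saying ``Similar to the $\mathcal{C}$-case of the previous section, we have'' and gives no separate proof, so mimicking the $\mathcal{C}$-case argument (trivial bundle $\mathcal{T}\times V$ with the flat connection $d$, constant parallel section $J$, tautological holomorphic section $\phi$, then Theorem~\ref{ThmPHS}) is exactly what is intended. One small simplification: the ``modest obstacle'' you raise is not actually an obstacle, since the hypotheses of Theorem~\ref{ThmPHS} only require $J$ and $\phi$ to be parallel/holomorphic sections of $(\mathcal{C}(E),\mathcal{J}^{(d,I_{\mathcal{T}})})$, not of $\mathcal{T}(E,g)$; that $\phi$ happens to take values in $\mathcal{T}(E,g)$ is irrelevant to invoking the theorem. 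Your alternative argument via $\mathcal{T}_{\leq s}(J)=\mathcal{C}_{\leq s}(J)\cap\mathcal{T}$ is also perfectly valid and arguably the quickest route.
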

We also have the subvarieties $\mathcal{T}^{\#}(J)$ that correspond to the $M^{\#}$ of Definition \ref{DefMPOUND} and which will stand for $\mathcal{T}^{(m_{1},*)}(J)$, $\mathcal{T}^{(*,m_{-1})}(J)$ and $\mathcal{T}^{(m_{1},m_{-1})}(J)$.
\begin{nota} When referring to the above subvarieties, we will usually drop the 
``$(J)$'' factors and will denote any one of them by $\mathcal{T}^{\delta}$. 
\end{nota}

We will now study the $\mathcal{T}^{\delta}$ in two ways. The first will be to embed them into a certain space of maximal isotropics associated to $V_{\mathbb{C}}:=V\otimes \mathbb{C}$ and the second will be to study them directly inside of $\mathcal{T}$ by using special charts.

\textbf{Maximal Isotropics:}\\ 
To begin, let \[MI(V_{\mathbb{C}})= \{W \in Gr_{n}(V_{\mathbb{C}}) | \ g(w_{1},w_{2})=0,  \ \forall \ w_{1},w_{2} \in W\} \]
be the space of maximal isotropics in $V_{\mathbb{C}}$.  Considering it as a complex submanifold of $Gr_{n}(V_{\mathbb{C}})$, we have

\begin{lemma}
\label{LemGTR}
The map 
\begin{align*}
\mu:\  &\mathcal{T} \longrightarrow MI(V_{\mathbb{C}}) \\
&K \longrightarrow V^{0,1}_{K}, 
 \end{align*}
where $V^{0,1}_{K}$ is the $-i$ eigenspace of $K$, is a biholomorphism.
\end{lemma}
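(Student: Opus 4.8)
I need to prove that the map $\mu: \mathcal{T} \to MI(V_{\mathbb{C}})$ sending $K \mapsto V^{0,1}_K$ is a biholomorphism.

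The plan is to proceed in three stages: first verify that $\mu$ is well-defined (lands in $MI(V_{\mathbb{C}})$), then construct an explicit inverse to establish bijectivity, and finally promote the bijection to a biholomorphism using the earlier result that $\mu$ is a holomorphic embedding onto an open subset of the full Grassmannian.

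First I would check well-definedness. Given $K \in \mathcal{T}(V,g)$, the eigenspace $V^{0,1}_K$ is an $n$-dimensional complex subspace of $V_{\mathbb{C}}$, so $\mu(K) \in Gr_n(V_{\mathbb{C}})$ already by the preceding Lemma. To see it is maximal isotropic, I would extend $g$ complex-bilinearly to $V_{\mathbb{C}}$ and take $w_1, w_2 \in V^{0,1}_K$, so $K w_j = -i w_j$. Since $K \in \mathcal{T}$ means $g(K\cdot, K\cdot) = g(\cdot,\cdot)$, I compute
\begin{equation*}
g(w_1,w_2) = g(Kw_1, Kw_2) = g(-iw_1, -iw_2) = -g(w_1,w_2),
\end{equation*}
which forces $g(w_1,w_2)=0$. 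Hence $V^{0,1}_K$ is isotropic and, being $n$-dimensional in the $2n$-dimensional space $V_{\mathbb{C}}$, is maximal isotropic. So $\mu$ does map into $MI(V_{\mathbb{C}})$.

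Next I would establish bijectivity by building the inverse. Given $W \in MI(V_{\mathbb{C}})$, I claim $W \oplus \overline{W} = V_{\mathbb{C}}$: isotropy together with positive-definiteness of $g$ forces $W \cap \overline{W} = 0$ (if $w \in W \cap \overline{W}$ then both $w$ and $\overline{w}$ lie in $W$, so $g(w,\overline{w})=0$, but $g(w,\overline{w})$ is, up to a positive constant, the Hermitian norm of $w$, forcing $w=0$), and a dimension count gives the direct sum. I then define an endomorphism $K$ of $V_{\mathbb{C}}$ by declaring it to act as $-i$ on $W$ and $+i$ on $\overline{W}$; this $K$ is real (commutes with conjugation) since $\overline{W}$ is the conjugate eigenspace, so it restricts to a real endomorphism of $V$ with $K^2 = -1$, i.e. $K \in \mathcal{C}(V)$. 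Using that $W$ is isotropic, I verify $g(K\cdot, K\cdot) = g(\cdot,\cdot)$ on the eigenspace decomposition, so in fact $K \in \mathcal{T}(V,g)$, and by construction $V^{0,1}_K = W$. This shows $\mu$ is surjective, and injectivity is immediate since $K$ is recovered from its $\mp i$-eigenspaces.

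Finally I would upgrade this bijection to a biholomorphism. The earlier Lemma already identifies $\mu$ as a holomorphic embedding of $\mathcal{T} \subset \mathcal{C}$ into $Gr_n(V_{\mathbb{C}})$ with open image; restricting the target, $\mu$ is a holomorphic injective immersion into the complex submanifold $MI(V_{\mathbb{C}})$. Since we have just shown $\mu$ is a bijection onto $MI(V_{\mathbb{C}})$, and a bijective holomorphic immersion between complex manifolds of the same dimension is a biholomorphism, the conclusion follows once I confirm the dimensions agree, $\dim_{\mathbb{C}} \mathcal{T} = \dim_{\mathbb{C}} MI(V_{\mathbb{C}})$, which both equal $\binom{n}{2}$. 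I expect the main obstacle to be the clean verification that the reconstructed $K$ is genuinely orthogonal (lies in $\mathcal{T}$ rather than merely $\mathcal{C}$); this is exactly where the isotropy of $W$ is used, mirroring the well-definedness computation in reverse, so the two directions reinforce each other and make the argument tight.
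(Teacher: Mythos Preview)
Your argument is correct and complete. The paper itself does not supply a proof of this lemma; it simply writes ``See \cite{Gindi1}'' and moves on. Your three-stage plan (well-definedness via the isotropy computation, explicit inverse using $W\cap\overline{W}=0$ from positive-definiteness of $g$, then upgrading to a biholomorphism via the earlier open-embedding lemma for $\mathcal{C}\hookrightarrow Gr_n(V_{\mathbb C})$ together with the dimension match $\dim_{\mathbb C}\mathcal{T}=\dim_{\mathbb C}MI(V_{\mathbb C})=\binom{n}{2}$) is the standard route and is exactly what one would expect the cited reference to contain. One small remark: in the final step you can shortcut slightly by invoking the fact that a holomorphic bijection between equidimensional complex manifolds is automatically a biholomorphism, so you do not strictly need to verify the immersion property separately.
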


\begin{proof}
See \cite{Gindi1}. 
\end{proof}

\begin{prop}
\label{PropMUT}
\begin{align*}
&1) \ \mu(\mathcal{T}_{(\leq r,+)}) = \{W \in MI(V_{\mathbb{C}}) | \ dim_{\mathbb{C}}(V^{1,0}_{J} \cap W) \geq n-r \}\\
&2) \ \mu(\mathcal{T}^{(m_{1},*)}) = \{W \in MI(V_{\mathbb{C}}) | \ dim_{\mathbb{C}}(V^{1,0}_{J} \cap W) = m_{1}\} \\
&3) \ \mu(\mathcal{T}^{(*,m_{-1})}) = \{W \in MI(V_{\mathbb{C}}) | \ dim_{\mathbb{C}}(V^{0,1}_{J} \cap W) = m_{-1}\}.
\end{align*}
Analogous formulas hold for $\mathcal{T}_{(\leq r,-)}$ and $\mathcal{T}^{(m_{1},m_{-1})}$.
\end{prop}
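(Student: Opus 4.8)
The plan is to reduce all three formulas to a single linear-algebra identity relating $\ker(J\pm K)$ to intersections of the $J$- and $K$-eigenspaces, and then to translate through $\mu$. Throughout I regard $J,K$ and $J\pm K$ as $\mathbb{C}$-linear operators on $V_{\mathbb{C}}$. Writing $W=\mu(K)=V^{0,1}_{K}$, so that $\overline{W}=V^{1,0}_{K}$, the two identities I would establish are
\[ \ker(J+K) = (V^{1,0}_J \cap V^{0,1}_K) \oplus (V^{0,1}_J \cap V^{1,0}_K), \]
\[ \ker(J-K) = (V^{1,0}_J \cap V^{1,0}_K) \oplus (V^{0,1}_J \cap V^{0,1}_K). \]
Since $\mu$ is a biholomorphism onto all of $MI(V_{\mathbb{C}})$ by Lemma \ref{LemGTR}, the right-hand sets in the proposition are literally the $\mu$-images of the indicated subsets of $\mathcal{T}$, so it suffices to match the defining dimension conditions across $\mu$.

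For the first identity, the inclusion $\supseteq$ is immediate: a vector in $V^{1,0}_J \cap V^{0,1}_K$ is killed by $J+K$ because $iv-iv=0$, and similarly for $V^{0,1}_J \cap V^{1,0}_K$; the sum is direct as the summands sit in distinct $J$-eigenspaces. The crux, and the one step that is not pure bookkeeping, is the reverse inclusion. Here I would observe that $N:=\ker(J+K)$ is $J$-invariant: for $v\in N$ we have $Kv=-Jv$, hence $KJv=-K^{2}v=v$, so $(J+K)(Jv)=J^{2}v+KJv=-v+v=0$. Being $J$-invariant, $N$ splits along the $J$-eigenspaces as $(N\cap V^{1,0}_J)\oplus(N\cap V^{0,1}_J)$; and if $v\in N\cap V^{1,0}_J$ then $Kv=-Jv=-iv$, so $v\in V^{0,1}_K$, giving $N\cap V^{1,0}_J=V^{1,0}_J\cap V^{0,1}_K$ and, symmetrically, $N\cap V^{0,1}_J=V^{0,1}_J\cap V^{1,0}_K$. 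The identity for $J-K$ follows by the same argument.

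To finish I would count dimensions. Because $J$ and $K$ are real, complex conjugation interchanges $V^{1,0}_J\leftrightarrow V^{0,1}_J$ and $V^{0,1}_K\leftrightarrow V^{1,0}_K$, so the two summands of $N$ are mutually conjugate and have equal complex dimension. Hence $\dim_{\mathbb{R}}\ker(J+K)=\dim_{\mathbb{C}}N=2\dim_{\mathbb{C}}(V^{1,0}_J\cap W)$, so $\dim_{\mathbb{R}}\ker(J+K)=2m_{1}$ precisely when $\dim_{\mathbb{C}}(V^{1,0}_J\cap W)=m_{1}$; this is statement 2. Statement 1 then follows from rank--nullity, the real rank of $J+K$ agreeing with the complex rank of its extension: $\mathrm{Rank}(J+K)\leq 2r$ iff $\dim_{\mathbb{C}}N\geq 2(n-r)$ iff $\dim_{\mathbb{C}}(V^{1,0}_J\cap W)\geq n-r$. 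Statement 3 is identical, using $V^{0,1}_J\cap V^{0,1}_K=V^{0,1}_J\cap W$ in place of $V^{1,0}_J\cap W$, and the remaining formulas for $\mathcal{T}_{(\leq r,-)}$ and $\mathcal{T}^{(m_{1},m_{-1})}$ drop out by intersecting, consistent with the splitting $\ker[J,K]=\ker(J+K)\oplus\ker(J-K)$ of Lemma \ref{LemCOM}.

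I expect the $J$-invariance of $N$ to be the only real content; the rest is dimension bookkeeping and conjugation symmetry. It is worth recording that the positive-definite metric enters only through Lemma \ref{LemGTR}, which makes $\mu$ a biholomorphism onto the entire space $MI(V_{\mathbb{C}})$---so, unlike the $\mathcal{C}$-case of Proposition \ref{PropCGR}, no intersection with $\mathrm{Im}\,\mu$ is needed---while the eigenspace computation itself uses nothing beyond $J^{2}=K^{2}=-1$.
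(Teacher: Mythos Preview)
Your argument is correct. The paper does not give an explicit proof of this proposition (nor of the analogous Proposition~\ref{PropCGR} in the $\mathcal{C}$-case), treating the statement as evident from the definitions; your write-up supplies precisely the linear-algebra verification the paper leaves implicit. The key step---the $J$-invariance of $\ker(J+K)$, deduced from $Jv=-Kv$ and hence $KJv=-K^{2}v=v$---is exactly the natural one, and your observation that $\mu$ being surjective onto $MI(V_{\mathbb{C}})$ (Lemma~\ref{LemGTR}) removes the need for the ``$\cap\,\mathrm{Im}\,\mu$'' appearing in Proposition~\ref{PropCGR} is a nice point to make explicit.
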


It follows that $\mu$ maps $\mathcal{T}^{(m_{1},*)}$, $\mathcal{T}^{(*,m_{-1})}$ and $\mathcal{T}_{(\leq r,\pm)}$ to either $MI^{(s)}$ or $MI^{(\geq s)}$ in $MI(V_{\mathbb{C}})$, where $MI^{(s)}= \{W \in MI(V_{\mathbb{C}}) | \ dim_{\mathbb{C}}(V^{0}\cap W)=s \}$ for some $V^{0} \in MI(V_{\mathbb{C}})$. $MI^{(s)}$ is a type of Schubert cell in $MI(V_{\mathbb{C}})$ and we will now review some of its properties. 

$\mathbf{MI^{(s)}:}$
As above let $(V,g$) be a real vector space of dimension $2n$ with a positive definite metric and for $V^{0} \in MI(V_{\mathbb{C}})$, define  $MI^{(s)}= \{W \in MI(V_{\mathbb{C}}) | \ dim_{\mathbb{C}}(V^{0}\cap W)=s \}$.

To study the $MI^{(s)}$, we will begin by defining certain holomorphic charts for $MI(V_{\mathbb{C}})$ that will be used, in particular, to show that the $MI^{(s)}$ are complex submanifolds.

If we let $W \in MI^{(s)}$ we then have: 

\begin{prop}
One may split \[V_{\mathbb{C}}=W \oplus W' = (W_{1} \oplus W_{2}) \oplus(\overline{W_{1}} \oplus W'_{2}),\]
where 
\begin{itemize}

\item $W'$ is a maximal isotropic
\item $W_{1}= V^{0} \cap W$ and $W_{1} \oplus W'_{2}=V^{0}$
\item $g(\overline{w_{1}}, w_{2})=0$,  $\forall \ \overline{w_{1}} \in \overline{W_{1}}$ and  $w_{2} \in W_{2}$.
\end{itemize}
\end{prop}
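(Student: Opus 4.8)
The plan is to build the complement $W'$ and the subspaces $W_2,W_2'$ explicitly out of the $g$-orthogonal geometry of the pair $(V^0,W)$, the essential input being that the complexified metric is \emph{positive} on conjugate pairs. First I would record the one fact that drives everything: for any $0\neq u\in V_{\mathbb{C}}$, writing $u=a+ib$ with $a,b\in V$, one has $g(u,\overline{u})=g(a,a)+g(b,b)>0$ since $g$ is positive definite on $V$. Two consequences follow at once. First, any maximal isotropic $L$ satisfies $L\cap\overline{L}=0$ (a nonzero element would force a nonzero real or imaginary part that is $g$-null), so $V_{\mathbb{C}}=L\oplus\overline{L}$; this applies both to $W$ and to $V^0$. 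Second, setting $W_1=V^0\cap W$, the conjugate $\overline{W_1}$ is disjoint from $W_1$ (since $\overline{W_1}\subset\overline{W}$ and $W\cap\overline{W}=0$) and the bilinear pairing $g:W_1\times\overline{W_1}\to\mathbb{C}$ is nondegenerate, because $g(w_1,\overline{w_1})>0$ for $w_1\neq 0$.

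Next I would produce the ambient orthogonal decomposition. As $g$ vanishes on $W_1$ and on $\overline{W_1}$ while pairing them perfectly, the subspace $W_1\oplus\overline{W_1}$ is $g$-nondegenerate, so $V_{\mathbb{C}}=(W_1\oplus\overline{W_1})\oplus U$ with $U=(W_1\oplus\overline{W_1})^{\perp}$ of dimension $2n-2s$. I would then define the desired complements \emph{inside} $U$:
\[ W_2 = W\cap U, \qquad W_2' = V^0\cap U, \qquad W' = \overline{W_1}\oplus W_2'. \]
The point of placing $W_2$ and $W_2'$ in $U$ is that the orthogonality $g(\overline{W_1},W_2)=0$ demanded by the last bullet, as well as $g(\overline{W_1},W_2')=0$, then hold for free.

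The core computation is a dimension count showing these are genuine complements. For $w\in W$ one has $g(w,W_1)=0$ automatically (as $W$ is isotropic and $W_1\subset W$), so $w\in U$ iff $g(w,\overline{W_1})=0$; thus $W_2$ is the kernel of the map $W\to\overline{W_1}^{*}$, $w\mapsto g(w,\overline{\,\cdot\,})$. Restricting this map to $W_1$ recovers the nondegenerate pairing above, so it already has rank $s$ on $W_1$, hence rank exactly $s$, giving $\dim W_2=n-s$; the identical argument with $V^0$ in place of $W$ gives $\dim W_2'=n-s$. Since $W_1\cap U=0$ (nondegeneracy of the pairing once more) and $W_1\subset W$, $W_1\subset V^0$, I would conclude $W=W_1\oplus W_2$ and $V^0=W_1\oplus W_2'$, which are the first two bullets.

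Finally I would verify that $W'$ is a maximal isotropic complementary to $W$. Isotropy is immediate: $g$ kills $\overline{W_1}$, kills $W_2'\subset V^0$, and kills the cross terms $g(\overline{W_1},W_2')$ because $W_2'\subset U$; as $\dim W'=s+(n-s)=n$ it is maximal. For complementarity, $W\cap V^0=W_1$ gives $(W\cap U)\cap(V^0\cap U)=W_1\cap U=0$, so $W_2\oplus W_2'$ has dimension $2(n-s)=\dim U$ and hence equals $U$; together with $V_{\mathbb{C}}=(W_1\oplus\overline{W_1})\oplus U$ this yields $V_{\mathbb{C}}=W\oplus W'$. The main obstacle is organizing these intersection-and-rank arguments so that every claimed subspace has exactly the expected dimension; once the positivity fact $g(u,\overline{u})>0$ and its corollary $W\cap\overline{W}=0$ are in hand, each step collapses to the nondegeneracy of the single Hermitian pairing on $W_1$, and no genuinely hard estimate remains.
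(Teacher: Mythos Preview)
Your argument is correct and complete. It is, however, a genuinely different route from the paper's. The paper invokes the biholomorphism $\mu:\mathcal{T}\to MI(V_{\mathbb C})$ to write $V^{0}=V^{0,1}_{J}$ and $W=V^{0,1}_{K}$ for suitable $J,K\in\mathcal{T}$, and then reads off the splitting from the real orthogonal $J,K$-invariant decomposition $V=Im(J-K)\oplus\ker(J-K)$ after complexifying; in particular $W_{1}\oplus\overline{W_{1}}$ is identified with $\ker(J-K)\otimes\mathbb{C}$ and your $U$ with $Im(J-K)\otimes\mathbb{C}$. Your proof stays entirely inside the complex linear algebra of $(V_{\mathbb C},g)$: the single input $g(u,\overline{u})>0$ forces the Hermitian pairing on $W_{1}\times\overline{W_{1}}$ to be nondegenerate, and every subsequent step (the rank of $W\to\overline{W_{1}}^{*}$, the equalities $W=W_{1}\oplus W_{2}$, $V^{0}=W_{1}\oplus W_{2}'$, $U=W_{2}\oplus W_{2}'$) is a dimension count driven by that one nondegeneracy. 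The advantage of your approach is that it is self-contained and does not appeal to the twistor dictionary; the advantage of the paper's is that it makes transparent the connection with the $J,K$-geometry already developed, and in particular shows that the abstract splitting here is exactly the complexification of the canonical real splitting $V=Im(J-K)\oplus\ker(J-K)$ used elsewhere.
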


\begin{proof}
Using Lemma \ref{LemGTR}, let $V^{0}=V^{0,1}_{J}$ and $W=V^{0,1}_{K}$, where $J,K \in \mathcal{T}$ satisfy $dimKer(J-K)=2s$.

Consider then the following orthogonal and $J,K$-invariant splitting:
\[ V= \tilde{V} \oplus ker(J-K),\]
where $\tilde{V}=Im(J-K)$.

If we complexify, we may further split 

\[\tilde{V}\otimes \mathbb{C}= (\tilde{V})^{0,1}_{J} \oplus (\tilde{V})^{0,1}_{K} \]

and 

\[ker(J-K)\otimes \mathbb{C} = V^{0,1}_{J} \cap V^{0,1}_{K} \oplus V^{1,0}_{J} \cap V^{1,0}_{K}. \]

Thus \[V_{\mathbb{C}}= (V^{0,1}_{J} \cap V^{0,1}_{K} \oplus (\tilde{V})^{0,1}_{K} ) \oplus  (V^{1,0}_{J} \cap V^{1,0}_{K} \oplus (\tilde{V})^{0,1}_{J}),\]

which satisfies the conditions listed in the proposition.
\end{proof}

Given the above splitting for $V_{\mathbb{C}}$, consider the corresponding holomorphic chart for $MI(V_{\mathbb{C}})$ about $W$: 
\begin{align*}
 \rho:En&d^{g}(W,W')   \longrightarrow MI(V_{\mathbb{C}}) \\ 
 A & \longrightarrow Graph(A) = \{w+Aw \in V_{\mathbb{C}} | \ w \in W \},
\end{align*}
where $End^{g}(W,W')=\{A \in End(W,W') | \ g(Aw,\tilde{w})=-g(w,A\tilde{w}) \ \forall w,\tilde{w} \in W\}. $

We will now describe how each $Graph(A)$ intersects $V^{0}$: 

\begin{prop}
Given $A = \bordermatrix{~ & W_{1} & W_{2} \cr
                  \overline{W_{1}} & a_{1} & a_{2} \cr
                  W'_{2} & a_{3} & a_{4} \cr}  \in End^{g}(W,W')$, the $Graph(A) \cap V^{0}= \{w + Aw \in V_{\mathbb{C}}| \ w \in ker a_{1} \} $ and its dimension equals that of  $ker a_{1}$.
\end{prop}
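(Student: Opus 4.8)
The plan is to mimic the proof of Proposition \ref{PropGRAPH} verbatim, since the only structural difference between the present setup and that one is cosmetic: here the complement $W'$ of $W$ is split as $\overline{W_1} \oplus W'_2$ rather than as $W'_1 \oplus W'_2$, but the decomposition of $V^0$ and, crucially, of its complement in $V_{\mathbb{C}}$ is identical. I would first record the only facts that are actually used, namely the direct-sum decomposition $V_{\mathbb{C}} = W_1 \oplus W_2 \oplus \overline{W_1} \oplus W'_2$ together with $V^0 = W_1 \oplus W'_2$, so that the complement of $V^0$ is $W_2 \oplus \overline{W_1}$. I would emphasize at the outset that the isotropy constraint defining $\mathrm{End}^g(W,W')$ plays no role whatsoever in this statement; it is a pure linear-algebra computation about graphs and a fixed $n$-plane, so one never needs to invoke $g(Aw,\tilde w) = -g(w,A\tilde w)$.

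Next I would take an arbitrary element $w + Aw \in Graph(A)$, write $w = w_1 + w_2 \in W_1 \oplus W_2$, and expand $Aw = (a_1 w_1 + a_2 w_2) + (a_3 w_1 + a_4 w_2)$, reading off from the block form of $A$ that $a_1 w_1 + a_2 w_2 \in \overline{W_1}$ and $a_3 w_1 + a_4 w_2 \in W'_2$. I would then project $w + Aw$ onto the complement $W_2 \oplus \overline{W_1}$ of $V^0$: the $W_2$-component is $w_2$ and the $\overline{W_1}$-component is $a_1 w_1 + a_2 w_2$. Hence $w + Aw \in V^0$ if and only if both components vanish, i.e. $w_2 = 0$ and $a_1 w_1 + a_2 w_2 = 0$, which collapses to $w_2 = 0$ and $a_1 w_1 = 0$, that is, $w = w_1 \in \ker a_1$. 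This gives $Graph(A) \cap V^0 = \{w + Aw \mid w \in \ker a_1\}$.

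Finally, for the dimension count I would note that the graph map $w \mapsto w + Aw$ is injective on $W$ (its $W$-component is $w$ itself), so it restricts to a linear isomorphism from $\ker a_1$ onto $Graph(A) \cap V^0$, whence $\dim\bigl(Graph(A) \cap V^0\bigr) = \dim \ker a_1$.

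I do not anticipate any genuine obstacle here. The only point requiring care is bookkeeping: making sure the rows and columns of $A$ are matched correctly to the summands $W_1, W_2$ (domain) and $\overline{W_1}, W'_2$ (codomain) so that $a_1$ is exactly the block $W_1 \to \overline{W_1}$, since it is precisely this block whose kernel controls the intersection. Everything else is the same argument already used in Proposition \ref{PropGRAPH}, and I would point the reader there to avoid repetition.
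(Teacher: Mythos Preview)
Your proposal is correct and takes exactly the same approach as the paper, which simply says ``This follows from Proposition \ref{PropGRAPH}.'' You have spelled out that deduction in full, correctly noting that the isotropy condition on $A$ is irrelevant and that the only change from Proposition \ref{PropGRAPH} is the relabeling $W'_1 \rightsquigarrow \overline{W_1}$.
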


\begin{proof}
This follows from Proposition \ref{PropGRAPH}. 
\end{proof}

If we define 
\[End^{g}_{t}(W,W')= \{ \bordermatrix{~ & W_{1} & W_{2} \cr
                  \overline{W_{1}} & a_{1} & a_{2} \cr
                  W'_{2} & a_{3} & a_{4} \cr} \in End^{g}(W,W') | \ dimKera_{1}=t \}\]

then by the above proposition we have: 

\begin{cor} The map 
\begin{align*}
 End^{g}_{t}&(W,W') \longrightarrow MI^{(t)} \cap Im\rho \\ 
 A & \longrightarrow Graph(A) 
\end{align*}
is well defined and bijective. Moreover, when $t=s$ this map gives a holomorphic chart for $MI^{(s)}$ about $W$. 
\end{cor}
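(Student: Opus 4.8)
The plan is to reduce the final corollary to the graph-chart description already established in the immediately preceding proposition, exactly as the non-metric version (the corollary following Proposition \ref{PropGRAPH}) was deduced from Proposition \ref{PropGRAPH}. Indeed, the preceding proposition already tells us that for $A \in End^{g}(W,W')$ written in block form with respect to the splitting $V_{\mathbb{C}}=(W_{1}\oplus W_{2})\oplus(\overline{W_{1}}\oplus W'_{2})$, one has $\dim(Graph(A)\cap V^{0})=\dim ker\, a_{1}$. Since $\rho$ is a holomorphic chart for $MI(V_{\mathbb{C}})$ about $W$ (by the isotropy-compatible graph construction), and since $V^{0}\in MI(V_{\mathbb{C}})$, a point $Graph(A)$ lies in $MI^{(t)}\cap Im\,\rho$ precisely when $\dim ker\, a_{1}=t$, which is the defining condition of $End^{g}_{t}(W,W')$. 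This gives well-definedness and injectivity of the map immediately.

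For well-definedness I would simply observe that $A\in End^{g}_{t}(W,W')$ forces $\dim(Graph(A)\cap V^{0})=t$ by the preceding proposition, so $Graph(A)\in MI^{(t)}$, and $Graph(A)\in Im\,\rho$ by construction. For injectivity I would note that $\rho$ is a chart, hence injective on all of $End^{g}(W,W')$, so its restriction to the subset $End^{g}_{t}(W,W')$ remains injective. For surjectivity, I would take any $U\in MI^{(t)}\cap Im\,\rho$, write $U=Graph(A)$ for a unique $A\in End^{g}(W,W')$ (since $U\in Im\,\rho$), and then invoke the preceding proposition once more: $\dim ker\, a_{1}=\dim(U\cap V^{0})=t$, so $A\in End^{g}_{t}(W,W')$. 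This establishes the bijection.

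For the final sentence, that the $t=s$ case yields a holomorphic chart for $MI^{(s)}$ about $W$, the key point is that $End^{g}_{s}(W,W')$ is an open subset of the vector space $End^{g}(W,W')$. Here $s=\dim(V^{0}\cap W)=\dim W_{1}$ is the maximal possible value of $\dim ker\, a_{1}$, since $a_{1}$ is a linear map out of $W_{1}$; thus $\dim ker\, a_{1}=s$ is equivalent to $a_{1}=0$, which cuts out a linear subspace (hence is closed, not open, in the naive reading). The correct observation is therefore that $t=s$ is the generic/maximal stratum within this chart in the sense that $\{a_{1}=0\}$ is a linear slice, and the restriction of $\rho$ to this linear subspace $\{A\in End^{g}(W,W')\mid a_{1}=0\}=End^{g}_{s}(W,W')$ is a holomorphic parametrization of $MI^{(s)}\cap Im\,\rho$ by a complex vector space. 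Since $\rho$ is a biholomorphism onto its image and $MI^{(s)}$ meets $Im\,\rho$ exactly in the image of this linear subspace, this exhibits a holomorphic chart for $MI^{(s)}$ about $W$.

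The main obstacle I anticipate is bookkeeping the isotropy constraint: one must verify that the condition $a_{1}=0$ is compatible with $A\in End^{g}(W,W')$ and that $\{A\in End^{g}(W,W')\mid a_{1}=0\}$ is genuinely a complex-linear subspace of $End^{g}(W,W')$, so that the resulting chart is modeled on a complex vector space. This amounts to checking that the skew-symmetry condition $g(Aw,\tilde w)=-g(w,A\tilde w)$ does not entangle the block $a_{1}$ with the remaining blocks in a way that obstructs setting $a_{1}=0$ independently; this follows from the orthogonality relations built into the splitting, in particular $g(\overline{w_{1}},w_{2})=0$, which decouples the relevant pairings. Once this linearity is confirmed, the chart statement is immediate and the corollary follows entirely from the preceding proposition together with the fact that $\rho$ is a holomorphic chart.
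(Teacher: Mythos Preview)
Your proposal is correct and follows the same approach as the paper, which simply records the corollary as an immediate consequence of the preceding proposition (``by the above proposition we have''). Your elaboration---checking well-definedness, injectivity, and surjectivity via the preceding dimension formula, and then observing that $End^{g}_{s}(W,W')=\{A:a_{1}=0\}$ is a complex-linear subspace of $End^{g}(W,W')$ so that $\rho$ restricted to it gives a holomorphic chart---is exactly what the paper leaves implicit.
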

Note that in appropriately chosen bases, 
        $ A= \bordermatrix{~ & W_{1} & W_{2} \cr
                  \overline{W_{1}} & a_{1} & a_{2} \cr
                  W'_{2} & a_{3} & a_{4} \cr} \\ \in End^{g}(W,W')$
is a skew matrix. Hence 

\[ End^{g}(W,W') = \left\{ 
  \begin{array}{l l}
  
  \bigcup_{t \in \{0,2,4,...,s\} }End^{g}_{t}(W,W')    
     & \quad \text{if $s$ is even}\\
    \bigcup_{t \in \{1,3,5,...,s\} }End^{g}_{t}(W,W') & \quad \text{if $s$ is odd}.
  \end{array} \right.\]                  
            
It is then straightforward to prove the following proposition:

\begin{prop}
\label{PropMIS}
\begin{align*}
& 1)\ MI(V_{\mathbb{C}}) \text{ is a disjoint union of the following two open subsets: } \\ &\ \ \bigcup_{t=even}MI^{(t)} \text{ and } \bigcup_{t=odd}MI^{(t)}.\\ 
& 2)\ MI^{(s)} \text{ is a complex submanifold of dimension } \frac{n(n-1)-s(s-1)}{2}.\\
& 3)\ \overline{MI^{(s)}} \text{ equals } \bigcup_{k\in \mathbb{Z}_{\geq 0}} MI^{(s+2k)} \text{ and is a holomorphic } \text{ subvariety of}\\ 
&\ \ \  \text{dimension } \frac{n(n-1)-s(s-1)}{2}.\\
& 4)\  MI^{(\geq s)}=  \overline{MI^{(s)}} \cup \overline{MI^{(s+1)}} \text{ and is a holomorphic subvariety of } MI(V_{\mathbb{C}}).
\end{align*}
\end{prop}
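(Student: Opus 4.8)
The plan is to read off all four parts from the graph chart $\rho:\mathrm{End}^g(W,W')\to MI(V_{\mathbb{C}})$ together with the identification $\mathrm{End}^g_t(W,W')\cong MI^{(t)}\cap \mathrm{Im}\,\rho$ supplied by the preceding corollary, in exact parallel with how Corollary \ref{CorGRS} was deduced from the graph chart for $Gr_n(V_{\mathbb{C}})$. The one genuinely new ingredient—responsible for the parity phenomena in parts 1), 3) and 4)—is that in suitable bases $A$ is skew, so its block $a_1\colon W_1\to\overline{W_1}$ is a skew $s\times s$ matrix, where $s=\dim_{\mathbb{C}}(V^0\cap W)$. Hence $\mathrm{rank}\,a_1$ is always even and $\dim\mathrm{Ker}\,a_1\equiv s\pmod 2$. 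For part 1) I would argue that $W''\mapsto\dim_{\mathbb{C}}(V^0\cap W'')$ has locally constant parity: in the chart about any $W$ every nearby point lies in some $MI^{(t)}$ with $t=\dim\mathrm{Ker}\,a_1\equiv s\pmod 2$. Thus $\bigcup_{t\ \mathrm{even}}MI^{(t)}$ and $\bigcup_{t\ \mathrm{odd}}MI^{(t)}$ are both open, are manifestly disjoint, and cover $MI(V_{\mathbb{C}})$.

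For part 2), observe that $\mathrm{End}^g_s(W,W')=\{A:\dim\mathrm{Ker}\,a_1=s\}=\{A:a_1=0\}$, since $a_1$ is $s\times s$. This is a linear subspace of $\mathrm{End}^g(W,W')$, so the chart restricts to a holomorphic chart exhibiting $MI^{(s)}$ as a complex submanifold. For the dimension I would use the identification $\mathrm{End}^g(W,W')\cong\wedge^2 W^*$, under which a skew self-map $W\to W'\cong W^*$ is an alternating form on $W$; the block $a_1$ is precisely the restriction of this form to $W_1$, so $\{a_1=0\}$ is the kernel of the restriction $\wedge^2 W^*\to\wedge^2 W_1^*$, of dimension $\binom{n}{2}-\binom{s}{2}=\tfrac{n(n-1)-s(s-1)}{2}$.

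For part 3) I would compute the closure chart by chart. Centering at $W_0\in MI^{(s')}$ (so $a_1$ is now $s'\times s'$ skew), the set $MI^{(s)}\cap\mathrm{Im}\,\rho$ is $\{\dim\mathrm{Ker}\,a_1=s\}=\{\mathrm{rank}\,a_1=s'-s\}$, whose closure inside the skew matrices is $\{\mathrm{rank}\,a_1\le s'-s\}=\{\dim\mathrm{Ker}\,a_1\ge s\}$, cut out by the minors (equivalently the sub-Pfaffians) of $a_1$, which are holomorphic in $A$. Because for skew $a_1$ the corank only takes values $\equiv s'\equiv s\pmod 2$, this locus is $\bigcup_{k\ge 0}(MI^{(s+2k)}\cap\mathrm{Im}\,\rho)$, and it is empty whenever $s'<s$ or $s'\not\equiv s\pmod 2$, showing that no other stratum meets $\overline{MI^{(s)}}$. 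Patching charts yields $\overline{MI^{(s)}}=\bigcup_{k\ge 0}MI^{(s+2k)}$ as a holomorphic subvariety; and since the open maximal-corank locus $\{\mathrm{rank}\,a_1=s'-s\}=MI^{(s)}$ is dense in $\{\mathrm{rank}\,a_1\le s'-s\}$, we get $\dim\overline{MI^{(s)}}=\dim MI^{(s)}=\tfrac{n(n-1)-s(s-1)}{2}$.

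Part 4) then follows by splitting $MI^{(\ge s)}=\bigcup_{t\ge s}MI^{(t)}$ according to parity: the gap-two union starting at $s$ is $\overline{MI^{(s)}}$ and the one starting at $s+1$ is $\overline{MI^{(s+1)}}$, so $MI^{(\ge s)}=\overline{MI^{(s)}}\cup\overline{MI^{(s+1)}}$ is a union of two holomorphic subvarieties and hence a holomorphic subvariety. The main obstacle is the bookkeeping in part 3): verifying that the closure of the maximal-corank locus of skew matrices is exactly the lower-rank Pfaffian variety, keeping careful track of the even corank jumps forced by skewness, and checking that this local determinantal description is chart-independent so that it globalizes to the stated stratified closure.
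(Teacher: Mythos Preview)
Your proposal is correct and follows exactly the route the paper intends: the paper sets up the graph chart $\rho$, the bijection $\mathrm{End}^g_t(W,W')\cong MI^{(t)}\cap\mathrm{Im}\,\rho$, and the observation that $A$ is skew in suitable bases (forcing the parity decomposition of $\mathrm{End}^g(W,W')$), and then simply declares the proposition ``straightforward'' from there. You have filled in precisely the details the paper omits, including the dimension count via $\wedge^2 W^*\to\wedge^2 W_1^*$ and the chartwise Pfaffian/rank analysis for the closure; nothing more is needed.
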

\textbf{Properties of the $\mathcal{T}^{\delta}$:} 
If we return to the setup of $(V,g)$ with a fixed element $J \in \mathcal{T}$ then by Proposition \ref{PropMUT} we have a result analogous to Proposition \ref{PropMIS} but for $\mathcal{T}$ instead of $MI(V_{\mathbb{C}})$. For future reference we provide the details:

\begin{prop}
\label{PropTS}
\begin{align*}
& 1)\ \mathcal{T} \text{ is a disjoint union of the following two open subsets: } \\ &\ \ \bigcup_{m_{1}=even}\mathcal{T}^{(m_{1},*)} \text{ and } \bigcup_{m_{1}=odd}\mathcal{T}^{(m_{1},*)}.\\ 
& 2)\ \mathcal{T}^{(m_{1},*)} \text{ is a complex submanifold of dimension } \frac{n(n-1)-m_{1}(m_{1}-1)}{2}.\\
& 3)\ \overline{\mathcal{T}^{(m_{1},*)}} \text{ equals } \bigcup_{k\in \mathbb{Z}_{\geq 0}} \mathcal{T}^{(m_{1}+2k,*)}  \text{ and is a holomorphic subvariety of }\\
& \ \ \text{ dimension } \frac{n(n-1)-m_{1}(m_{1}-1)}{2}.\\
& 4)\  \mathcal{T}_{(\leq r, +)}=  \overline{\mathcal{T}^{(n-r,*)}} \cup \overline{\mathcal{T}^{(n-r+1,*)}} \text{ and is a holomorphic subvariety of } \mathcal{T}.
\end{align*}
Analogous results hold for $\mathcal{T}_{(\leq r, -)}$ and $\mathcal{T}^{(*,m_{-1})}$.
\end{prop}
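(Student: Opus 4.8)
The plan is to deduce all four parts, together with the stated analogues for $\mathcal{T}_{(\leq r,-)}$ and $\mathcal{T}^{(*,m_{-1})}$, directly from Proposition \ref{PropMIS} by transporting them across the biholomorphism $\mu:\mathcal{T}\to MI(V_{\mathbb{C}})$ of Lemma \ref{LemGTR}. The key point is that Proposition \ref{PropMUT} already identifies each $\mathcal{T}^{\delta}$ with a Schubert-type stratum $MI^{(s)}$ or $MI^{(\geq s)}$, and a biholomorphism carries complex submanifolds to complex submanifolds, holomorphic subvarieties to holomorphic subvarieties, preserves complex dimension, and commutes with topological closure.

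First I would fix the base point $V^{0}$ of the stratification. Since $J\in\mathcal{T}$ is compatible with $g$, for $v,w$ in a single eigenspace of $J$ one has $g(v,w)=g(Jv,Jw)=-g(v,w)$, so the $\mathbb{C}$-bilinear extension of $g$ vanishes on each $\pm i$ eigenspace; being $n$-dimensional, both $V^{1,0}_{J}$ and $V^{0,1}_{J}$ are maximal isotropic, i.e.\ lie in $MI(V_{\mathbb{C}})$. Taking $V^{0}=V^{1,0}_{J}$ in the definition of $MI^{(s)}$, Proposition \ref{PropMUT}(2) gives $\mu(\mathcal{T}^{(m_{1},*)})=MI^{(m_{1})}$ and Proposition \ref{PropMUT}(1) gives $\mu(\mathcal{T}_{(\leq r,+)})=MI^{(\geq n-r)}$. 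For the minus analogues I would instead set $V^{0}=V^{0,1}_{J}$ and invoke Proposition \ref{PropMUT}(3) together with the minus version of part (1).

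Next I would read off each conclusion. Part 1) is the $\mu$-preimage of Proposition \ref{PropMIS}(1), the even/odd decomposition matching because $\mathcal{T}^{(m_{1},*)}\cong MI^{(m_{1})}$ has the same index parity. Part 2) is Proposition \ref{PropMIS}(2) with $s=m_{1}$, which yields the dimension $\frac{n(n-1)-m_{1}(m_{1}-1)}{2}$ since $\mu$ preserves complex dimension. For Part 3) I would use that $\mu$ is a homeomorphism, so $\overline{\mathcal{T}^{(m_{1},*)}}=\mu^{-1}(\overline{MI^{(m_{1})}})$, and then apply Proposition \ref{PropMIS}(3) to rewrite the right-hand side as $\bigcup_{k\geq 0}\mathcal{T}^{(m_{1}+2k,*)}$, a holomorphic subvariety of the same dimension. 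Part 4) is Proposition \ref{PropMIS}(4) with $s=n-r$, pulled back via $\mu^{-1}$.

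There is no genuine obstacle here: all of the analytic content is already contained in Proposition \ref{PropMIS} and in the biholomorphism of Lemma \ref{LemGTR}. The only points that require a line of verification are that $V^{1,0}_{J}$ and $V^{0,1}_{J}$ are maximal isotropic, so that $MI^{(s)}$ may legitimately be formed with these base points, and that the Schubert index matches the kernel parameter, i.e.\ $s=m_{1}$, so that the dimension formula transports verbatim. Both are immediate from the compatibility of $J$ with $g$ and from Proposition \ref{PropMUT}, after which each assertion of Proposition \ref{PropTS} follows by applying $\mu^{-1}$ to the corresponding assertion of Proposition \ref{PropMIS}.
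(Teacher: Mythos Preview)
Your proposal is correct and is exactly the approach the paper takes: the paper states immediately before Proposition \ref{PropTS} that by Proposition \ref{PropMUT} one has a result analogous to Proposition \ref{PropMIS} but for $\mathcal{T}$ instead of $MI(V_{\mathbb{C}})$, and then simply records the details. Your write-up supplies the routine verifications (that $V^{1,0}_{J}$ and $V^{0,1}_{J}$ are maximal isotropic, and that a biholomorphism preserves submanifolds, subvarieties, dimension, and closure) that the paper leaves implicit.
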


Another way to prove Part 1) of the above proposition is to use the following:

\begin{prop}\ \
\begin{enumerate}
\item[1)] $\bigcup_{m_{1}=even}\mathcal{T}^{(m_{1},*)}= \{K \in \mathcal{T} | \ J \text{ and } K \text{ induce the same orientations} \}. $
\item[2)] $\bigcup_{m_{1}=odd}\mathcal{T}^{(m_{1},*)}= \{K \in \mathcal{T} | \ J \text{ and } K \text{ induce opposite orientations} \}. $
\end{enumerate}
The above holds true if we were to replace $\mathcal{T}^{(m_{1},*)}$ with $\mathcal{T}^{(*,m_{-1})}$ and $J$ with $-J$.
\end{prop}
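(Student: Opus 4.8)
The final proposition characterizes the two open pieces of $\mathcal{T}$ (for a fixed $J \in \mathcal{T}$) as the sets where $K$ induces the same orientation as $J$ or the opposite one. By Proposition \ref{PropREP}, part 2), the parity of $\frac{1}{2}\dim\ker(J+K) = m_1$ already encodes whether $J$ and $K$ agree in orientation: they induce the same orientation precisely when $m_1$ is even. So the content of the proposition is essentially a repackaging of the linear-algebraic fact proved in the \textbf{Some Background} subsection, applied fiberwise (here, pointwise in the vector-space setting). The plan is therefore to reduce the statement directly to Proposition \ref{PropREP}(2).

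\textbf{Main steps.} First I would recall the orthogonal, $J,K$-invariant splitting \eqref{EqSP}, namely $V = \mathrm{Im}[J,K] \oplus \ker(J+K) \oplus \ker(J-K)$, which holds for any $K \in \mathcal{T}$ since $J,K \in \mathcal{T}(V,g)$. By definition $K \in \mathcal{T}^{(m_1,*)}$ means $\dim\ker(J+K) = 2m_1$, and since every $K$ lies in exactly one $\mathcal{T}^{(m_1,*)}$, the union over all $m_1$ is all of $\mathcal{T}$; splitting by parity of $m_1$ gives the disjoint union into $\bigcup_{m_1 \text{ even}}\mathcal{T}^{(m_1,*)}$ and $\bigcup_{m_1 \text{ odd}}\mathcal{T}^{(m_1,*)}$. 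Next, I would invoke Proposition \ref{PropREP}(2), which states that $J$ and $K$ induce the same orientation on $V$ if and only if $\frac{1}{2}\dim\ker(J+K)$ is even, i.e. iff $m_1$ is even. This immediately identifies $\bigcup_{m_1 \text{ even}}\mathcal{T}^{(m_1,*)}$ with the locus where $J$ and $K$ agree in orientation, and the odd union with the opposite-orientation locus, proving parts 1) and 2).

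\textbf{The replacement clause.} For the final sentence, I would observe that swapping $J \mapsto -J$ interchanges the roles of $\ker(J+K)$ and $\ker(J-K)$, so that $\dim\ker((-J)+K) = \dim\ker(K-J) = 2m_{-1}$; thus $\mathcal{T}^{(*,m_{-1})}$ for $J$ is exactly $\mathcal{T}^{(m_{-1},*)}$ for $-J$. Since $-J$ induces the orientation opposite to $J$ when $n$ is odd (as $\det(-J) = (-1)^{2n}\det J$, in fact $-J$ and $J$ always induce the same orientation because $2n$ is even---so one must be slightly careful, but the statement is phrased relative to $-J$ itself), applying parts 1)--2) verbatim with $J$ replaced by $-J$ yields the analogous characterization in terms of $m_{-1}$ and the orientation comparison with $-J$.

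\textbf{Main obstacle.} There is essentially no serious obstacle here: the proposition is a corollary of the already-established Proposition \ref{PropREP}(2), and the only thing requiring care is the orientation bookkeeping in the $J \mapsto -J$ replacement, ensuring that the parity statement is transported correctly under the substitution. The substantive work was done in proving the representation-theoretic Proposition \ref{PropREP}; this final proposition is a fiberwise (here, pointwise) translation of that fact into the language of the $\mathcal{T}^{\#}$ strata.
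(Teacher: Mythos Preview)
Your proposal is correct and matches the paper's approach: the paper's proof simply cites Proposition~\ref{PropM'P}, whose parts 3) and 4) are in turn derived from Proposition~\ref{PropREP}, which is exactly the linear-algebraic fact you invoke directly. Your handling of the replacement clause via the substitution $J\mapsto -J$ is also the intended reading (your parenthetical about $\det(-J)$ confuses the determinant of the map with the induced orientation, but this is irrelevant since, as you note, the statement is phrased relative to $-J$ itself).
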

\begin{proof} The proof follows from Proposition \ref{PropM'P}.
\end{proof}
\begin{rmk}
Below, we will present an alternative derivation of (the 
entire) Proposition \ref{PropTS} without using maximal isotropics.
\end{rmk}
Let us now consider some properties of $\mathcal{T}^{(m_{1},m_{-1})}= \mathcal{T}^{(m_{1},*)} \cap \mathcal{T}^{(*,m_{-1})}$.  

\begin{prop}
$\mathcal{T}^{(m_{1},m_{-1})}$ is nonempty if and only if $n-m_{1}-m_{-1}=2k$ $(k \in \mathbb{Z}_{\geq 0})$.
\end{prop}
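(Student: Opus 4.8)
The plan is to prove both directions through the orthogonal, $J,K$-invariant splitting
\[
V = \mathrm{Im}[J,K] \oplus \ker(J+K) \oplus \ker(J-K)
\]
of Proposition \ref{PropM'P}, together with the fact recorded there that $\mathrm{rank}[J,K]$ is always a multiple of four. Here I use that $K \in \mathcal{T}^{(m_1,m_{-1})}$ means precisely $\dim\ker(J+K)=2m_1$ and $\dim\ker(J-K)=2m_{-1}$.

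For the forward direction, I would assume $\mathcal{T}^{(m_1,m_{-1})}\neq\varnothing$ and pick such a $K$. By Proposition \ref{PropM'P} the summand $\mathrm{Im}[J,K]$ has real dimension $4k$ for some $k \in \mathbb{Z}_{\geq 0}$, and the three summands are orthogonal, hence independent. Counting real dimensions then gives $2n = 4k + 2m_1 + 2m_{-1}$, that is, $n - m_1 - m_{-1} = 2k$, which is the asserted condition. This is exactly the place where the metric hypothesis is essential: for general $J,K \in \mathcal{C}(V)$ the rank of $[J,K]$ need not be divisible by four, as the Remark following Proposition \ref{PropREP} shows, so the parity constraint genuinely uses that $J,K \in \mathcal{T}$.

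For the converse, given $m_1,m_{-1}\in\mathbb{Z}_{\geq 0}$ with $n - m_1 - m_{-1} = 2k \geq 0$, I would construct $K$ explicitly. Since $J$ is $g$-orthogonal, $V$ admits an orthogonal decomposition into $J$-invariant subspaces
\[
V = \Big(\bigoplus_{i=1}^{k} H_i\Big) \oplus V_1 \oplus V_{-1},
\]
with each $H_i$ four-dimensional and $\dim V_1 = 2m_1$, $\dim V_{-1} = 2m_{-1}$; this is possible precisely because $4k + 2m_1 + 2m_{-1} = 2n$. On each $H_i$ I identify $(H_i,g,J)$ with $(\mathbb{H},\langle\cdot,\cdot\rangle,L_i)$, where $L_i$ is left quaternionic multiplication, and set $K = L_j$; then $K$ is orthogonal, satisfies $K^2=-1$, and anticommutes with $J$, so $\ker(J\pm K)\cap H_i = 0$ and $H_i\subseteq \mathrm{Im}[J,K]$. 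On $V_1$ I set $K=-J$ and on $V_{-1}$ I set $K=J$. The resulting $K$ preserves each orthogonal summand and is orthogonal with $K^2=-1$ on each, hence $K\in\mathcal{T}$, and one checks $\ker(J+K)=V_1$ and $\ker(J-K)=V_{-1}$, so $K\in\mathcal{T}^{(m_1,m_{-1})}$.

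The one genuinely delicate point is the orthogonal quaternionic normalization on the blocks $H_i$: I must verify that every four-dimensional $J$-invariant orthogonal subspace is isometrically $J$-isomorphic to $(\mathbb{H},L_i)$, so that the anticommuting orthogonal $K=L_j$ is actually available. This I would obtain by choosing a unit vector $e_1$, observing that $Je_1\perp e_1$ since $J$ is orthogonal, then choosing a unit $e_2\perp\langle e_1, Je_1\rangle$ with $Je_2$ automatically orthogonal to the rest by $J$-invariance, and reading off the quaternion multiplication table on the orthonormal basis $\{e_1,Je_1,e_2,Je_2\}$. Everything else is routine bookkeeping with the splitting.
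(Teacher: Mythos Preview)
Your proof is correct and follows essentially the same approach as the paper: the forward direction uses the orthogonal splitting $V=\mathrm{Im}[J,K]\oplus\ker(J+K)\oplus\ker(J-K)$ together with $\mathrm{rank}[J,K]=4k$ (the paper cites Lemma~\ref{LemCOM} and Proposition~\ref{PropREP} directly rather than Proposition~\ref{PropM'P}, but the content is identical), yielding $2n=4k+2m_1+2m_{-1}$. For the converse the paper simply declares the construction ``straightforward,'' so your explicit quaternionic block construction is a welcome elaboration rather than a different method.
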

\begin{proof}
If $K \in \mathcal{T}^{(m_{1},m_{-1})}$ then by Lemma \ref{LemCOM}, $dimKer[J,K]=2(m_{1}+m_{-1})$ and by Proposition \ref{PropREP},  $rank[J,K]=4k$. Hence $2n=4k+2(m_{1}+m_{-1})$. The proof of the rest of the proposition is straightforward. 
\end{proof}
Supposing that $n-m_{1}-m_{-1}=2k$, we will now show that $\mathcal{T}^{(m_{1},*)}$ and $\mathcal{T}^{(*,m_{-1})}$ intersect transversally. To do so, we will first describe $T_{K}\mathcal{T}^{(m_{1},*)}$ and $T_{K}\mathcal{T}^{(*,m_{-1})}$ in $T_{K}\mathcal{T}= \mathfrak{o}_{\{ K\}}:=\{A \in \mathfrak{o}(V,g) | \ \{A,K\}=0\}$. If we split $V=Im[J,K] \oplus ker(J+K) \oplus ker(J-K)$ and let $P_{0}$, $P_{1}$ and $P_{-1}$ be the corresponding projection operators, we then have

\begin{prop}
\label{PropTANTAU}
\begin{align*}
&1) \ T_{K}\mathcal{T}^{(m_{1},*)}= \{A \in \mathfrak{o}_{\{K \}} | \ P_{1}AP_{1}=0\} \\
&2) \ T_{K}\mathcal{T}^{(*,m_{-1})}= \{A \in \mathfrak{o}_{\{K \}} | \ P_{-1}AP_{-1}=0\} \\
&3) \ \text{If } n-m_{1}-m_{-1}=2k, \text{ for } k \in \mathbb{Z}_{\geq 0}, \text{ then } \mathcal{T}^{(m_{1},*)} \text{ and } \mathcal{T}^{(*,m_{-1})}   \\ 
& \ \ \text{ intersect transversally}. 
\end{align*}
\end{prop}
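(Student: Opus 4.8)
The plan is to reduce everything to the $J,K$-invariant spectral decomposition from the proof of Proposition \ref{PropREP} and to the already-established $\mathcal{C}$-case, Proposition \ref{PropTANC}. First I would record the key identification of the image of $J+K$. Writing $V = Im[J,K] \oplus ker(J+K) \oplus ker(J-K) = V_{0} \oplus V_{1} \oplus V_{-1}$ as in Proposition \ref{PropREP}, the operator $J+K$ vanishes on $V_{1}$, equals $2J$ on $V_{-1}$, and on each summand $V_{e}$ of $Im[J,K]$ satisfies $(J+K)^{2} = -1 + \{J,K\} - 1 = 2(e-1) \neq 0$. Since the splitting is $J,K$-invariant, $J+K$ preserves it and is invertible on $V_{0} \oplus V_{-1}$, so $Im(J+K) = V_{0} \oplus V_{-1}$. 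Consequently the condition ``$A$ maps $ker(J+K)$ into $Im(J+K)$'' is literally ``$P_{1}AP_{1}=0$,'' which is what lets me match Parts 1) and 2) with the metric refinement of Proposition \ref{PropTANC}.

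For Part 1) I would run the same curve argument as in Proposition \ref{PropTANC}, now inside $\mathcal{T}$. Given $A = K'(0) \in T_{K}\mathcal{T}^{(m_{1},*)}$ (so $A \in \mathfrak{o}_{\{K\}}$ automatically) and $v \in ker(J+K)$, constancy of $dimKer(J+K(t)) = 2m_{1}$ lets me extend $v$ to a curve $v(t) \in ker(J+K(t))$; differentiating $(J+K(t))v(t)=0$ at $t=0$ gives $Av = -(J+K)v'(0) \in Im(J+K)$, i.e. $P_{1}AP_{1}=0$. This gives one inclusion, and equality follows by a dimension comparison. Here I would use $\dim_{\mathbb{R}}\mathfrak{o}_{\{K\}} = n(n-1)$ together with the fact that $A \mapsto P_{1}AP_{1}$ surjects $\mathfrak{o}_{\{K\}}$ onto the analogous algebra $\mathfrak{o}_{\{K\}}$ of the summand $V_{1}$ (surjectivity because any skew endomorphism of $V_{1}$ anticommuting with $K|_{V_{1}}$ extends by zero to an element of $\mathfrak{o}_{\{K\}}$, as $K$ preserves the splitting). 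Its kernel then has real dimension $n(n-1) - m_{1}(m_{1}-1)$, which is exactly $\dim_{\mathbb{R}}\mathcal{T}^{(m_{1},*)}$ as read off from Proposition \ref{PropTS}. Part 2) is identical after replacing $J$ by $-J$.

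For Part 3) the hypothesis $n-m_{1}-m_{-1}=2k$ guarantees, via the preceding nonemptiness proposition, that there is a $K \in \mathcal{T}^{(m_{1},m_{-1})}$ at which to test transversality, and I would verify $T_{K}\mathcal{T}^{(m_{1},*)} + T_{K}\mathcal{T}^{(*,m_{-1})} = \mathfrak{o}_{\{K\}}$ by an explicit splitting rather than a dimension count. Given $A \in \mathfrak{o}_{\{K\}}$, let $B$ be the endomorphism agreeing with $P_{1}AP_{1}$ on $V_{1}$ and vanishing on $V_{0} \oplus V_{-1}$; one checks $B \in \mathfrak{o}_{\{K\}}$, since its $V_{1}$-block is skew and anticommutes with $K|_{V_{1}}$ while $K$ preserves the splitting. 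Then $P_{-1}BP_{-1}=0$, so $B \in T_{K}\mathcal{T}^{(*,m_{-1})}$, whereas $P_{1}(A-B)P_{1}=0$, so $A-B \in T_{K}\mathcal{T}^{(m_{1},*)}$; the decomposition $A = (A-B) + B$ then exhibits the required sum.

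I expect the only real care to lie in the image computation $Im(J+K) = V_{0} \oplus V_{-1}$, which converts the abstract rank condition into the clean projection condition, and in the extension-by-zero bookkeeping that powers both the dimension count in Part 1) and the splitting in Part 3). Once these are in hand, the three statements follow mechanically from Propositions \ref{PropREP}, \ref{PropTANC}, and \ref{PropTS}.
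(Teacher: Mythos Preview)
Your proposal is correct and follows essentially the same route as the paper, which simply refers back to Proposition \ref{PropTANC}: you re-run the curve argument inside $\mathcal{T}$ for the inclusion and close by a dimension count, exactly as in the $\mathcal{C}$-case. The one extra ingredient you spell out---the identification $Im(J+K)=V_{0}\oplus V_{-1}$ in the metric case, which converts the condition ``$A:ker(J+K)\to Im(J+K)$'' into ``$P_{1}AP_{1}=0$''---is precisely what the paper is leaving implicit when it says ``similar,'' and your explicit extension-by-zero argument for Part 3) is a clean way to carry out what the paper calls straightforward.
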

\begin{proof}The proof is similar to that of Proposition \ref{PropTANC}.
\end{proof}
\begin{cor}For $n-m_{1}-m_{-1}=2k$ $(k \in \mathbb{Z}_{\geq 0})$, $\mathcal{T}^{(m_{1},m_{-1})} $ is a complex submanifold of dimension $\frac{1}{2}(n(n-1)-m_{1}(m_{1}-1)-m_{-1}(m_{-1}-1)).$
\end{cor}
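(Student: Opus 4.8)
The plan is to realize $\mathcal{T}^{(m_{1},m_{-1})}$ as the transverse intersection of the two complex submanifolds $\mathcal{T}^{(m_{1},*)}$ and $\mathcal{T}^{(*,m_{-1})}$ and then to invoke the standard fact that a transverse intersection of complex submanifolds of a complex manifold is again a complex submanifold, whose codimension is the sum of the two codimensions and whose tangent space at a point is the intersection of the two tangent spaces.

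First I would recall from the immediately preceding proposition that, under the hypothesis $n-m_{1}-m_{-1}=2k$, the set $\mathcal{T}^{(m_{1},m_{-1})}$ is nonempty, so there is an actual point $K$ at which to work. At such a $K$, Part 3) of Proposition \ref{PropTANTAU} provides exactly what is needed: the submanifolds $\mathcal{T}^{(m_{1},*)}$ and $\mathcal{T}^{(*,m_{-1})}$ meet transversally. Since both are complex submanifolds of $\mathcal{T}$ (Proposition \ref{PropTS}, Part 2, together with its analogue for $\mathcal{T}^{(*,m_{-1})}$) and $\mathcal{T}$ is itself a complex manifold, their transverse intersection $\mathcal{T}^{(m_{1},m_{-1})}$ is a complex submanifold, with $T_{K}\mathcal{T}^{(m_{1},m_{-1})}=T_{K}\mathcal{T}^{(m_{1},*)} \cap T_{K}\mathcal{T}^{(*,m_{-1})}$.

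It then remains only to compute the dimension. I would use $\dim_{\mathbb{C}}\mathcal{T}=\tfrac{n(n-1)}{2}$, which is the $m_{1}=0$ case of the dimension formula in Proposition \ref{PropTS} (since $\mathcal{T}^{(0,*)}$ is open in $\mathcal{T}$). By that same proposition the codimensions of the two factors in $\mathcal{T}$ are $\tfrac{m_{1}(m_{1}-1)}{2}$ and $\tfrac{m_{-1}(m_{-1}-1)}{2}$. By transversality the codimension of the intersection is the sum of these, so
\[
\dim_{\mathbb{C}}\mathcal{T}^{(m_{1},m_{-1})}=\frac{n(n-1)}{2}-\frac{m_{1}(m_{1}-1)}{2}-\frac{m_{-1}(m_{-1}-1)}{2},
\]
which is exactly the asserted value $\tfrac{1}{2}\bigl(n(n-1)-m_{1}(m_{1}-1)-m_{-1}(m_{-1}-1)\bigr)$.

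Since the substantive content—the transversality itself and the two individual dimension counts—is already supplied by Propositions \ref{PropTANTAU} and \ref{PropTS}, there is no genuine obstacle here; the corollary is essentially a bookkeeping consequence of those results. The only point deserving a word of care is checking that $\mathcal{T}^{(m_{1},m_{-1})}$ is nonempty under the numerical condition $n-m_{1}-m_{-1}=2k$, so that the transversality statement has content at an honest point of the intersection, and this is precisely the preceding nonemptiness proposition.
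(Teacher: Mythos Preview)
Your proposal is correct and is precisely the argument the paper has in mind: the corollary is stated without proof immediately after Proposition~\ref{PropTANTAU}, and is meant to follow from the transversality in Part~3) of that proposition together with the dimension formulas for $\mathcal{T}^{(m_{1},*)}$ and $\mathcal{T}^{(*,m_{-1})}$ in Proposition~\ref{PropTS}. The only quibble is that the nonemptiness statement you cite is not the ``immediately preceding'' proposition but the one just before Proposition~\ref{PropTANTAU}; this does not affect the argument.
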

Since \[\mathcal{T}_{s}= \bigcup_{m_{1}+m_{-1}=n-s} \mathcal{T}^{(m_{1},m_{-1})},\]
we find that $\mathcal{T}_{s}$ is a disjoint union of complex submanifolds of varied  dimensions.

\textbf{Other Charts for $\mathcal{T}(V,g)$:}
We will now describe a $C^{\infty}$ chart for $\mathcal{T}$ about $K \in \mathcal{T}^{(m_{1},m_{-1})}(J)$ that can be used to derive Proposition \ref{PropTS} without using maximal isotropics.

 There is of course the standard chart that is induced from the map 
\begin{align*}
\mathfrak{o}&_{\{K\}} \longrightarrow \mathcal{T} \\
&A \longrightarrow exp(A)Kexp(-A),  
 \end{align*} 
  where $\mathfrak{o}_{\{K\}}= \{A \in \mathfrak{o}(V,g)| \ 
  \{A,K\}=0 \}$.
  However, this chart has the disadvantage that it does not allow us  to immediately determine in which $\mathcal{T}^{(m_{1}',m_{-1}')}$ $exp(A)Kexp(-A)$ lies. Instead, let us use $J$ and $K$ to decompose $\mathfrak{o}(V,g)$ into
 \[ \mathfrak{u}_{J} + \mathfrak{u}_{K} \oplus \mathfrak{o}_{\{J\}}  
 \cap \mathfrak{o}_{\{K\}},\] where $\mathfrak{u}_{K}= \{A \in \mathfrak{o}(V,g)| \ [A,K]=0 \}.$ (Note that this is an orthogonal splitting of $\mathfrak{o}(V,g)$, where the metric used is $-tr$.) Letting $\mathcal{D}_{J}$ be a complement to $\mathfrak{u}_{J} \cap \mathfrak{u}_{K}$ in $\mathfrak{u}_{J}$, consider then the map 
\begin{align*}
\psi: \mathcal{D}_{J} & \oplus \mathfrak{o}_{\{J\}}  
 \cap \mathfrak{o}_{\{K\}}  \longrightarrow \mathcal{T} \\
 A & + B \longrightarrow exp(A)\cdot exp(B)\cdot K,  
 \end{align*} 
 where $exp(B)\cdot K:= exp(B)Kexp(-B)$.
  This map is a local diffeomorphism from a neighborhood about the origin in the domain to one about the point $K$ in the range. We will use it to define $C^{\infty}$ charts for the $\mathcal{T}^{\#}(J)$ about $K$ as follows: First consider the orthogonal and $J,K$-invariant splitting of  $V=V_{0} \oplus V_{1} \oplus V_{-1}$, where $V_{\pm1}=ker(J \pm K)$, and the corresponding splitting of $\mathfrak{o}_{\{J\}} \cap \mathfrak{o}_{\{K\}}= \mathfrak{o}_{\{J\}}  
 \cap \mathfrak{o}_{\{K\}}|_{V_{0}} \oplus \mathfrak{o}_{\{J\}}|_{V_{1}} \oplus \mathfrak{o}_{\{J\}}|_{V_{-1}}$. We then have: 
 
 \begin{prop}
 Suppose $K \in \mathcal{T}^{(m_{1},m_{-1})}(J)$ and let $A \in \mathcal{D}_{J}$ and $B=B_{0}+B_{1}+B_{-1} \in \mathfrak{o}_{\{J\}}  
 \cap \mathfrak{o}_{\{K\}}|_{V_{0}} \oplus \mathfrak{o}_{\{J\}}|_{V_{1}} \oplus \mathfrak{o}_{\{J\}}|_{V_{-1}} $. There exists a neighborhood $N$ in $\mathfrak{o}_{\{J\}} 
 \cap \mathfrak{o}_{\{K\}}$ about the origin such that if $B \in N$ then 
 \begin{align*}
 & 1)\  \psi(A+B) \in \mathcal{T}^{(m_{1},*)}(J) \text{ if and only if } B_{1}=0
 \\
  & 2)\  \psi(A+B) \in \mathcal{T}^{(*,m_{-1})}(J) \text{ if and only if }  B_{-1}=0\\
& 3)\  \psi(A+B) \in \mathcal{T}^{(m_{1},m_{-1})}(J) \text{ if and only if }  B_{1}=0 \text{ and } B_{-1}=0.
 \end{align*}
 \end{prop}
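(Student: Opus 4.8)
The plan is to reduce the whole statement to a block--diagonal eigenvalue computation for the operator $KJ$ relative to the splitting $V = V_{0}\oplus V_{1}\oplus V_{-1}$. First I would eliminate $A$. Since $A \in \mathcal{D}_{J}\subset \mathfrak{u}_{J}$ satisfies $[A,J]=0$, we have $\exp(A)J\exp(-A)=J$, so writing $K'=\psi(A,B)$ and $K_{B}=\exp(B)K\exp(-B)$,
\[ J\pm K' = \exp(A)\,(J\pm K_{B})\,\exp(-A). \]
Therefore $\dim\ker(J\pm K')=\dim\ker(J\pm K_{B})$, and $A$ is irrelevant to the kernel dimensions; I may take $A=0$.

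Next, because $B\in\mathfrak{o}_{\{J\}}\cap\mathfrak{o}_{\{K\}}$ anticommutes with both $J$ and $K$, one gets $\exp(B)K=K\exp(-B)$ and hence $K_{B}=K\exp(-2B)$. Factoring through $K$ and using $K^{-1}=-K$ gives
\[ J+K_{B}=K\bigl(\exp(-2B)-KJ\bigr), \qquad J-K_{B}=-K\bigl(\exp(-2B)+KJ\bigr), \]
so that $\dim\ker(J+K_{B})=\dim\ker\bigl(\mathrm{id}-\exp(2B)KJ\bigr)$ and $\dim\ker(J-K_{B})=\dim\ker\bigl(\mathrm{id}+\exp(2B)KJ\bigr)$.

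The structural point is that $B$ preserves the splitting $V=V_{0}\oplus V_{1}\oplus V_{-1}$: anticommuting with $J$ and with $K$ forces $B$ to commute with $\{J,K\}=JK+KJ$, whose eigenspaces are precisely $V_{1}$ (eigenvalue $+2$), $V_{-1}$ (eigenvalue $-2$) and $V_{0}$. This is also what makes the block decomposition $B=B_{0}+B_{1}+B_{-1}$ meaningful. Both $KJ$ and $\exp(2B)$ then act block--diagonally, with $KJ=+\mathrm{id}$ on $V_{1}$, $KJ=-\mathrm{id}$ on $V_{-1}$, and $KJ$ having eigenvalues $c,\overline{c}$ (the ones with $|c|=1$, $c\neq\pm1$ from the Background) on $V_{0}$, in particular bounded away from $\pm1$.

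I would then evaluate $\dim\ker(\mathrm{id}-\exp(2B)KJ)$ block by block. On $V_{0}$ and $V_{-1}$ the operator $\exp(2B)KJ$ has all eigenvalues bounded away from $1$ once $B$ is small, so these blocks contribute nothing; on $V_{1}$ the operator is $\exp(2B_{1})$, and for small $B_{1}$ one has $\ker(\mathrm{id}-\exp(2B_{1}))=\ker(B_{1})$. Hence $\dim\ker(J+K')=\dim\ker(B_{1})\le \dim V_{1}=2m_{1}$, with equality iff $B_{1}=0$, which is 1). Part 2) is identical with the roles of $+1$ and $-1$ interchanged, giving $\dim\ker(J-K')=\dim\ker(B_{-1})$, equal to $2m_{-1}$ iff $B_{-1}=0$; and 3) is the conjunction of 1) and 2) since $\mathcal{T}^{(m_{1},m_{-1})}=\mathcal{T}^{(m_{1},*)}\cap\mathcal{T}^{(*,m_{-1})}$. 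The only genuinely analytic step, and the main obstacle, is this last localization: checking that for $B$ in a sufficiently small neighborhood the kernel of $J\pm K'$ sits entirely in the block $V_{\pm1}$ and coincides there with $\ker(B_{\pm1})$. This is a routine eigenvalue--continuity estimate, but it is the only place where smallness of $B$ is used and it is exactly what dictates the choice of $N$; everything else is exact algebra.
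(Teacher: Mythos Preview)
Your argument is correct and follows essentially the same route as the paper: eliminate $A$ using $[A,J]=0$, exploit $\{B,J\}=\{B,K\}=0$ to reduce to a block computation on $V=V_{0}\oplus V_{1}\oplus V_{-1}$, and show that for small $B$ the kernel of $J+K_B$ lives entirely in $V_{1}$ where the condition becomes $\exp(2B_{1})=\mathrm{id}$, hence $B_{1}=0$. The only difference is cosmetic---you factor through $K(\exp(-2B)-KJ)$ and track the kernel as $\ker(B_{1})$, while the paper works directly with $J_{1}+\exp(B_{1})\cdot K_{1}$ and argues the kernel equals all of $V_{1}$ iff the operator vanishes---but the substance is identical.
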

 
 \begin{proof}
  First note that since $A \in \mathfrak{u}_{J}$, the $dimKer(J \pm exp(A)\cdot exp(B)\cdot K)= dimKer(J \pm exp(B) \cdot K)$. Focusing on the proof of Part 1) of the proposition, let us split $J= J_{0} \oplus J_{1}\oplus J_{-1}$ according to the decomposition of $V=V_{0} \oplus V_{1} \oplus V_{-1}$. It then follows that for small enough $B \in \mathfrak{o}_{\{J\}} 
 \cap \mathfrak{o}_{\{K\}}$, the $ker(J + exp(B) \cdot K)=   ker(J_{1} + exp(B_{1}) \cdot K_{1})$. Hence the $dimKer(J + exp(B) \cdot K)=2m_{1}$ if and only if $J_{1}= -exp(B_{1})\cdot K_{1}$. As $K_{1}=-J_{1}$ and $B_{1} \in \mathfrak{o}_{\{J\}}|_{V_{1}}$, this is equivalent to $exp(2B_{1})=1$ (as endomorphisms of $V_{1}$). For small enough $B$, this in turn holds if and only if $B_{1}=0$. The other parts of the proposition are proved similarly.  
  \end{proof}
    This then defines charts for the $\mathcal{T}^{\#}(J)$ that together with Proposition \ref{PropTHSJ} can be used to derive Proposition \ref{PropTS} without using maximal isotropics. 
\subsection{Twistors of Bundles}
\label{SecTB23}
 We will now introduce the holomorphic subvarieties $\mathcal{C}^{\delta}$ and $\mathcal{T}^{\delta}$ for the case when the twistor spaces are associated to bundles. Their properties will be used in Section \ref{SecGTM} to derive our main results about the general $M^{\delta}$.   
\subsubsection{$\mathcal{C}$-case}
Let $E \longrightarrow (M,I)$  be a real rank $2n$ bundle fibered over a complex manifold and equipped with 
\begin{itemize}
\item a connection $\nabla$ that has (1,1) curvature
\item a section $J$ of $\pi: \mathcal{C}(E) \longrightarrow M$ satisfying $\nabla J=0$. 
\end{itemize}
 We will use $J$ to decompose $(\mathcal{C}(E),\mathcal{J}^{(\nabla,I)})$ into complex submanifolds and holomorphic subvarieties as follows. First consider the bundle $\pi^{*}E$ with the connection $\pi^{*}\nabla$ that has (1,1) curvature with respect to $\mathcal{J}^{(\nabla,I)}$.  
By Proposition \ref{PropTHS}, $\phi$, defined by $\phi |_{K}=K$, is a natural  holomorphic section of $(\mathcal{C}(\pi^*E),\mathcal{J}^{(\pi^{*}\nabla,\mathcal{I})}) \longrightarrow \mathcal{C}(E),$ where $\mathcal{I}=\mathcal{J}^{(\nabla,I)}$. Since $\pi^{*}J$ is a parallel section, by Theorem \ref{ThmPHS} we obtain the following holomorphic subvarieties in $\mathcal{C}:=\mathcal{C}(E)$.

\begin{prop}
\label{PropCEHS} 
Let $J \in \Gamma(\mathcal{C})$ satisfy $\nabla J=0$. The following are holomorphic subvarieties of $(\mathcal{C},\mathcal{J}^{(\nabla,I)})$:
\begin{align*}
 1)  \ & \mathcal{C}_{\leq s}(J)= \{K \in \mathcal{C}  | \ Rank [J,K] \leq 2s\}\\
 2) \ & \mathcal{C}_{(\leq r,\pm)}(J)= \{K \in \mathcal{C} | \ Rank (J\pm K) \leq 2r\}.
\end{align*}
\end{prop}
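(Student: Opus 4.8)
The plan is to obtain this proposition as a direct specialization of Theorem \ref{ThmPHS} to the holomorphic--twistor construction recalled in Section \ref{ExTHS}. Concretely, I would take the base complex manifold of Theorem \ref{ThmPHS} to be $(\mathcal{C}(E),\mathcal{I})$ with $\mathcal{I}=\mathcal{J}^{(\nabla,I)}$, the real rank $2n$ bundle to be $\pi^{*}E \longrightarrow \mathcal{C}(E)$, the connection to be $\pi^{*}\nabla$, the parallel section to be $\pi^{*}J$, and the holomorphic section to be $\phi$. Once these roles are fixed, the statement should follow with no further analytic input: the entire content is the verification that the hypotheses of Theorem \ref{ThmPHS} are met and that the associated bundle maps restrict fiberwise to the maps appearing in the definitions of $\mathcal{C}_{\leq s}(J)$ and $\mathcal{C}_{(\leq r,\pm)}(J)$.

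First I would check the three hypotheses. That $\pi^{*}\nabla$ has $(1,1)$ curvature with respect to $\mathcal{I}$ was already noted in Section \ref{ExTHS}: since $\pi$ is a holomorphic submersion (Proposition \ref{PropHS}) and $\nabla$ has $(1,1)$ curvature, the pullback connection inherits the $(1,1)$ condition. That $\pi^{*}J$ is parallel is immediate from the defining property of the pullback connection, $(\pi^{*}\nabla)(\pi^{*}J)=\pi^{*}(\nabla J)=0$, using the hypothesis $\nabla J=0$. Finally, that $\phi$, defined by $\phi|_{K}=K$, is a holomorphic section of $(\mathcal{C}(\pi^{*}E),\mathcal{J}^{(\pi^{*}\nabla,\mathcal{I})})$ is precisely Proposition \ref{PropTHS}. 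Thus $\pi^{*}J$ and $\phi$ play the roles of the parallel section $J$ and the holomorphic section $K$ in Theorem \ref{ThmPHS}.

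It then remains to identify the relevant bundle maps pointwise. At a point $K\in\mathcal{C}(E)$ the fiber of $\pi^{*}E$ is $E_{\pi(K)}$, and under the identifications $\phi|_{K}=K$ and $\pi^{*}J|_{K}=J_{\pi(K)}$ the commutator and sum bundle maps of Theorem \ref{ThmPHS} become $[\pi^{*}J,\phi]|_{K}=[J_{\pi(K)},K]$ and $(\pi^{*}J\pm\phi)|_{K}=J_{\pi(K)}\pm K$, acting on $E_{\pi(K)}$. Hence the degeneracy loci produced by Theorem \ref{ThmPHS}, namely $\{K\in\mathcal{C} \mid Rank\,[\pi^{*}J,\phi]|_{K}\leq 2s\}$ and $\{K\in\mathcal{C} \mid Rank\,(\pi^{*}J\pm\phi)|_{K}\leq 2r\}$, coincide with $\mathcal{C}_{\leq s}(J)$ and $\mathcal{C}_{(\leq r,\pm)}(J)$, and are therefore holomorphic subvarieties of $(\mathcal{C},\mathcal{I})$ by that theorem. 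The only step requiring care is this last bookkeeping: confirming that $\pi^{*}J$ and $\phi$ are legitimately the parallel and holomorphic sections and that the induced fiberwise maps reproduce the defining rank conditions. I do not expect any genuine analytic obstacle, since the result is a clean instance of the general construction of Theorem \ref{ThmPHS}.
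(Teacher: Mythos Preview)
Your proposal is correct and mirrors the paper's own argument exactly: the paper also applies Theorem \ref{ThmPHS} to the bundle $\pi^{*}E \longrightarrow (\mathcal{C}(E),\mathcal{I})$ with connection $\pi^{*}\nabla$, parallel section $\pi^{*}J$, and holomorphic section $\phi$ (Proposition \ref{PropTHS}). Your additional bookkeeping on the fiberwise identification of the maps is a helpful elaboration of what the paper leaves implicit.
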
 
We also have the subvarieties $\mathcal{C}^{\#}(J)$ that correspond to the $M^{\#}$ of Definition \ref{DefMPOUND} and which will stand for $\mathcal{C}^{(m_{1},*)}(J)$, $\mathcal{C}^{(*,m_{-1})}(J)$ and $\mathcal{C}^{(m_{1},m_{-1})}(J)$.
\begin{nota} When referring to the above subvarieties, we will usually drop the 
``$(J)$'' factors and will denote any one of them by $\mathcal{C}^{\delta}$.
\end{nota} 

Using the fact that the $\mathcal{C}^{\#}$ are $C^{\infty}$ fiber bundles together with the results of Propositions \ref{PropDIMC} and \ref{PropCEHS}, we arrive at the following:
\begin{prop}
\label{PropCODCE}
\ \ 
\begin{enumerate} 
  \item[1)] The $\mathcal{C}^{\#}$ are complex submanifolds of 
                $\mathcal{C}$  and 
           have the following codimensions: 
      \begin{enumerate}
               \item[a)]  $codim_{\mathbb{C}} \mathcal{C}^{(m_{1},*)}= 
                              m_{1}^{2}$ \\
                \item[b)]  $codim_{\mathbb{C}} \mathcal{C}^{(*,m_{-1})}= 
                               m_{-1}^{2}$\\
                \item[c)]  $codim_{\mathbb{C}} \mathcal{C}^{(m_{1},m_{-1})}=     
                               m_{1}^{2}+m_{-1}^{2}$.
      \end{enumerate}       
\item[]                 
\item[2)] $\mathcal{C}_{(\leq r,\pm)}$ is a holomorphic subvariety of $\mathcal{C}$  of codimension $(n-r)^{2}$.
\end{enumerate}
\end{prop}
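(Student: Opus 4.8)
The plan is to reduce every assertion to the corresponding fiberwise statement for twistor spaces of vector spaces, which was settled in Section~\ref{SecTVS}, and then to transport it to the total space via the submersion $\pi:\mathcal{C}(E)\longrightarrow M$. For a fixed $x\in M$, identify $\pi^{-1}(x)$ with $\mathcal{C}(E_{x})$; the value $J_{x}$ is then a fixed element of $\mathcal{C}(E_{x})$ in the sense of Section~\ref{SecTVS}, and directly from the definitions $\mathcal{C}^{\#}(J)\cap\pi^{-1}(x)=\mathcal{C}^{\#}(J_{x})$, with the analogous identity for $\mathcal{C}_{(\leq r,\pm)}$. Since $J$ is a global smooth section of $\mathcal{C}(E)$ with $J^{2}=-1$, over a coordinate neighborhood $U\subset M$ one may choose a local frame of $E$ adapted to $J$ that trivializes $\mathcal{C}(E)|_{U}\cong U\times\mathcal{C}(E_{x})$ and carries $J$ to the constant section $J_{0}$ (concretely, any two complex structures on a fixed $\mathbb{R}^{2n}$ are conjugate, and a smooth local conjugating family exists because $g\mapsto gJ_{0}g^{-1}$ is a submersion). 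In this trivialization the loci become the products $U\times\mathcal{C}^{\#}(J_{0})$ and $U\times\mathcal{C}_{(\leq r,\pm)}(J_{0})$, which is precisely the statement that each is a $C^{\infty}$ fiber bundle over $M$ with fiber the associated vector-space locus.

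With this product description in hand I would handle Part~1) as follows. By Proposition~\ref{PropDIMC}, and by the corollary to Proposition~\ref{PropTANC} in the case of $\mathcal{C}^{(m_{1},m_{-1})}$, the fiber $\mathcal{C}^{\#}(J_{0})$ is a complex submanifold of $\mathcal{C}(E_{x})$ of complex codimension $m_{1}^{2}$, $m_{-1}^{2}$, resp.\ $m_{1}^{2}+m_{-1}^{2}$. Because the base factor $U$ contributes equally to the dimension of $\mathcal{C}(E)$ and of the subbundle, these codimensions are unchanged in the total space, so the product charts already exhibit $\mathcal{C}^{\#}(J)$ as a $C^{\infty}$ embedded submanifold of the stated real codimension. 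Proposition~\ref{PropCEHS} independently identifies $\mathcal{C}^{\#}(J)$ as a holomorphic subvariety. To fuse the two, I would invoke the standard regularity fact that a holomorphic subvariety which is simultaneously a $C^{\infty}$ embedded submanifold has empty singular locus and is therefore a complex submanifold: its tangent spaces, being limits of the complex tangent spaces on the dense regular locus, are $\mathcal{J}$-invariant, and on that dense locus the smooth and complex-analytic structures coincide. This yields the complex codimensions in all three cases of Part~1).

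For Part~2) the regularity step is unnecessary, since $\mathcal{C}_{(\leq r,\pm)}$ is only claimed to be a holomorphic subvariety, and this is supplied directly by Proposition~\ref{PropCEHS}. It remains only to compute its dimension. By Proposition~\ref{PropDIMC}(2) the fiber $\mathcal{C}_{(\leq r,\pm)}(J_{0})$ has complex dimension $n^{2}-(n-r)^{2}$, so the local product $U\times\mathcal{C}_{(\leq r,\pm)}(J_{0})$ gives $\dim_{\mathbb{C}}\mathcal{C}_{(\leq r,\pm)}(J)=\dim_{\mathbb{C}}M+n^{2}-(n-r)^{2}$; subtracting from $\dim_{\mathbb{C}}\mathcal{C}(E)=\dim_{\mathbb{C}}M+n^{2}$ gives codimension $(n-r)^{2}$.

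I expect the two delicate points to be, first, producing a genuine $C^{\infty}$ (not merely set-theoretic) trivialization in which $J$ is constant and the loci become honest products---this is where the $C^{\infty}$ fiber bundle claim is actually earned---and second, the regularity statement that a smooth holomorphic subvariety is a complex submanifold, which is what lets Propositions~\ref{PropDIMC} and \ref{PropCEHS} be combined into the single conclusion of Part~1). Everything else is the bookkeeping of fiberwise dimensions already carried out in Section~\ref{SecTVS}.
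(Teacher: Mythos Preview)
Your approach is essentially the one the paper takes: the paper's entire argument is the single sentence ``Using the fact that the $\mathcal{C}^{\#}$ are $C^{\infty}$ fiber bundles together with the results of Propositions~\ref{PropDIMC} and~\ref{PropCEHS}, we arrive at the following,'' and you have correctly unpacked what this sentence means---local trivialization adapted to $J$ to get the fiber-bundle structure, the fiberwise dimension count from Section~\ref{SecTVS}, and the holomorphicity from Proposition~\ref{PropCEHS}---together with the standard regularity fact that a $C^{\infty}$-smooth analytic set is a complex submanifold. One small point of precision: Proposition~\ref{PropCEHS} literally asserts holomorphicity only for $\mathcal{C}_{\leq s}$ and $\mathcal{C}_{(\leq r,\pm)}$, not for the $\mathcal{C}^{\#}$ directly, so when you invoke it for $\mathcal{C}^{(m_{1},*)}$ you are implicitly using that this locus is open in $\mathcal{C}_{(\leq n-m_{1},+)}$ and hence a locally closed analytic set; this is harmless but worth making explicit.
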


We can also describe $T_{K}\mathcal{C}^{\#}(J)$ as follows. First recall that  $\nabla$ induces a splitting of $T\mathcal{C}$ into $V\mathcal{C} \oplus H^{\nabla}\mathcal{C}$, where $V_{K}\mathcal{C}=T_{K}\mathcal{C}(E_{\pi(K)})$ and $H^{\nabla}\mathcal{C}$ is a certain horizontal distribution (see  \cite{Gindi1}). Since $\nabla J=0$, we have

\begin{prop} 
\label{PropTCSPLIT}
$T_{K}\mathcal{C}^{\#}(J)= V_{K}\mathcal{C}^{\#}(J) \oplus  H_{K}^{\nabla}\mathcal{C}.$
\end{prop}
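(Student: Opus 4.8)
The plan is to treat $\mathcal{C}^{\#}(J)$ as a $C^{\infty}$ fiber subbundle of $\pi:\mathcal{C}\longrightarrow M$ whose fiber over $x$ is the vector-space subvariety $\mathcal{C}^{\#}(J_{x})\subset\mathcal{C}(E_{x})$, and to exploit the hypothesis $\nabla J=0$ in the form that $\nabla$-parallel transport carries these fibers into one another. Once that is in place, the asserted splitting is an essentially formal consequence of the ambient decomposition $T_{K}\mathcal{C}=V_{K}\mathcal{C}\oplus H^{\nabla}_{K}\mathcal{C}$.

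First I would dispose of the easy inclusion. By definition $V_{K}\mathcal{C}^{\#}(J)$ is the tangent space at $K$ to the fiber $\mathcal{C}^{\#}(J_{\pi(K)})$ sitting inside $V_{K}\mathcal{C}=T_{K}\mathcal{C}(E_{\pi(K)})$, so that the fiberwise descriptions of Proposition~\ref{PropTANC} identify it with $V_{K}\mathcal{C}\cap T_{K}\mathcal{C}^{\#}(J)$. In particular $V_{K}\mathcal{C}^{\#}(J)\subseteq T_{K}\mathcal{C}^{\#}(J)$ is immediate.

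The heart of the argument is to show $H^{\nabla}_{K}\mathcal{C}\subseteq T_{K}\mathcal{C}^{\#}(J)$. Given $v\in T_{\pi(K)}M$, I would choose a curve $\gamma$ with $\gamma(0)=\pi(K)$ and $\dot\gamma(0)=v$, and let $P_{t}:E_{\pi(K)}\longrightarrow E_{\gamma(t)}$ denote $\nabla$-parallel transport. The horizontal lift of $v$ at $K$ is the velocity at $t=0$ of $\tilde K(t):=P_{t}KP_{t}^{-1}$, and these velocities fill out $H^{\nabla}_{K}\mathcal{C}$ as $v$ varies. Since $\nabla J=0$, parallel transport intertwines $J$, i.e. $P_{t}J_{\pi(K)}P_{t}^{-1}=J_{\gamma(t)}$; consequently $\ker(J_{\gamma(t)}\pm\tilde K(t))=P_{t}\ker(J_{\pi(K)}\pm K)$, so the kernel dimensions that define the various $\mathcal{C}^{\#}$ are constant along $\tilde K(t)$. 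Hence $K\in\mathcal{C}^{\#}(J)$ forces $\tilde K(t)\in\mathcal{C}^{\#}(J)$ for all small $t$, and differentiating shows that the horizontal lift of $v$ lies in $T_{K}\mathcal{C}^{\#}(J)$.

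Finally I would assemble the two inclusions. As $V_{K}\mathcal{C}\cap H^{\nabla}_{K}\mathcal{C}=0$, the sum $V_{K}\mathcal{C}^{\#}(J)\oplus H^{\nabla}_{K}\mathcal{C}$ is direct and contained in $T_{K}\mathcal{C}^{\#}(J)$. For the reverse inclusion, write an arbitrary $w\in T_{K}\mathcal{C}^{\#}(J)$ as $w=w^{V}+w^{H}$ along $T_{K}\mathcal{C}=V_{K}\mathcal{C}\oplus H^{\nabla}_{K}\mathcal{C}$; then $w^{H}\in T_{K}\mathcal{C}^{\#}(J)$ by the previous step, whence $w^{V}=w-w^{H}\in V_{K}\mathcal{C}\cap T_{K}\mathcal{C}^{\#}(J)=V_{K}\mathcal{C}^{\#}(J)$, and equality follows. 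The main obstacle is precisely the crux step: verifying cleanly that $\mathcal{C}^{\#}(J)$ is genuinely a subbundle whose fibers are permuted by parallel transport. This is exactly where $\nabla J=0$ is used, and it is the invariance of the $\ker(J\pm K)$-dimensions under conjugation by $P_{t}$ that forms the conceptual core of the proof.
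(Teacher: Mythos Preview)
Your argument is correct and follows the same line as the paper's proof: the key point in both is that, because $\nabla J=0$, the $\nabla$-parallel translate $\tilde K(t)$ of $K$ remains in $\mathcal{C}^{\#}(J)$, so $H^{\nabla}_{K}\mathcal{C}\subset T_{K}\mathcal{C}^{\#}(J)$. The paper states only this horizontal step and leaves the rest implicit, whereas you have spelled out the vertical inclusion and the reverse containment explicitly; but the approach is the same.
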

Note that $V_{K}\mathcal{C}^{\#}(J)= T_{K}\mathcal{C}(E_{\pi(K)})^{\#}(J)$ was already described in Proposition \ref{PropTANC}.
\begin{proof}[Proof of Proposition \ref{PropTCSPLIT}]
By \cite{Gindi1}, a general element of $H_{K}^{\nabla}\mathcal{C}$ is given by 
$\frac{dK(t)}{dt}|_{t=0}$, where $K(t)$ is the parallel translate of $K=K(0)$ (using $\nabla$) along some curve in $M$. The proof of the proposition then follows from the fact that since $\nabla J =0$, $K(t) \in \mathcal{C}^{(m_{1},m_{-1})}(J)$ for some $(m_{1},m_{-1})$. 
\end{proof}

\subsubsection{$\mathcal{T}$-case}
Let $(E,g) \longrightarrow (M,I)$  be a real rank $2n$ bundle fibered over a complex manifold and equipped with a fiberwise metric. Also let \begin{itemize}
\item  $\nabla$ be a metric connection on $E$ that has (1,1) curvature and
\item $J$ be a section of $\pi: \mathcal{T}(E,g) \longrightarrow M$ that satisfies  $\nabla J=0$. 
\end{itemize}
If we consider $\mathcal{T}:=\mathcal{T}(E,g)$ with its complex structure $\mathcal{J}^{(\nabla,I)}$ then, similar to the $\mathcal{C}-$case of the previous section, we have
\begin{prop} 
\label{PropTESUB}
Let $J \in \Gamma(\mathcal{T})$ satisfy $\nabla J=0$. The following are holomorphic subvarieties of $(\mathcal{T},\mathcal{J}^{(\nabla,I)})$:
\begin{align*}
 1)  \ & \mathcal{T}_{\leq s}(J)= \{K \in \mathcal{T}  | \ Rank [J,K] \leq 2s\}\\
 2) \ & \mathcal{T}_{(\leq r,\pm)}(J)= \{K \in \mathcal{T} | \ Rank (J\pm K) \leq 2r\}.
\end{align*}
\end{prop}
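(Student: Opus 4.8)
The plan is to mirror the proof of the $\mathcal{C}$-case (Proposition \ref{PropCEHS}) essentially verbatim, replacing the ambient twistor space $\mathcal{C}(E)$ by the metric twistor space $\mathcal{T}(E,g)$ throughout. The structural fact that makes this transfer free of charge is that $\mathcal{T}(E,g)$ is a complex submanifold of $(\mathcal{C}(E),\mathcal{J}^{(\nabla,I)})$, so that every holomorphicity statement over $\mathcal{C}$ restricts to one over $\mathcal{T}$, and so that $\mathcal{T}$ may be taken as the base manifold in Theorem \ref{ThmPHS}.

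First I would record that $\pi:(\mathcal{T},\mathcal{I})\longrightarrow(M,I)$ is holomorphic, being the restriction of the holomorphic submersion $\pi:(\mathcal{C},\mathcal{J}^{(\nabla,I)})\longrightarrow(M,I)$ of Proposition \ref{PropHS} to the complex submanifold $\mathcal{T}$. Consequently the pulled-back connection $\pi^{*}\nabla$ on $\pi^{*}E\longrightarrow(\mathcal{T},\mathcal{I})$ again has $(1,1)$ curvature, since pulling back the $(1,1)$ form $R^{\nabla}$ along a holomorphic map preserves type; this is exactly the input required by Theorem \ref{thmINT} to form $\mathcal{J}^{(\pi^{*}\nabla,\mathcal{I})}$ on $\mathcal{C}(\pi^{*}E)$.

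Next I would produce the two sections to which Theorem \ref{ThmPHS} is applied, over the base $\mathcal{T}$. The tautological section $\phi$, given by $\phi|_{K}=K$, is holomorphic: this is Proposition \ref{PropTHS}, read now with $\mathcal{T}$ in place of $\mathcal{C}(E)$, equivalently it is the restriction of the $\mathcal{C}$-version of $\phi$ to the complex submanifold $\mathcal{T}\subset\mathcal{C}$, and the restriction of a holomorphic map to a complex submanifold is holomorphic. Since each value $\phi|_{K}=K$ is $g$-compatible, $\phi$ in fact lands in $\mathcal{T}(\pi^{*}E,\pi^{*}g)$, but this refinement plays no role in the present argument. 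The section $\pi^{*}J$ is parallel, because $\pi^{*}\nabla(\pi^{*}J)=\pi^{*}(\nabla J)=0$ by hypothesis. Thus $\pi^{*}J$ and $\phi$ are respectively parallel and holomorphic sections of $(\mathcal{C}(\pi^{*}E),\mathcal{J}^{(\pi^{*}\nabla,\mathcal{I})})\longrightarrow(\mathcal{T},\mathcal{I})$.

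Finally, applying Theorem \ref{ThmPHS} to $\pi^{*}E\longrightarrow(\mathcal{T},\mathcal{I})$ with the parallel section $\pi^{*}J$ and the holomorphic section $\phi$ shows that the degeneracy loci of the real bundle maps $[\pi^{*}J,\phi]$, $\pi^{*}J+\phi$ and $\pi^{*}J-\phi$ are holomorphic subvarieties of $\mathcal{T}$. It then remains only to identify these loci: evaluating at $K\in\mathcal{T}$ gives $[\pi^{*}J,\phi]|_{K}=[J,K]$ and $(\pi^{*}J\pm\phi)|_{K}=J\pm K$ as endomorphisms of $E_{\pi(K)}$, so the loci are precisely $\mathcal{T}_{\leq s}(J)$ and $\mathcal{T}_{(\leq r,\pm)}(J)$. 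I do not expect a genuine obstacle here; the only points requiring care are the bookkeeping that the relevant objects over $\mathcal{C}$ restrict cleanly to the complex submanifold $\mathcal{T}$ and that the $(1,1)$-curvature hypothesis survives the pullback, both of which follow from $\mathcal{T}$ being a complex submanifold of $\mathcal{C}$ together with the holomorphicity of $\pi$.
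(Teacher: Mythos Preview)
Your proposal is correct and matches the paper's intended argument: the paper does not give a separate proof but simply notes that the $\mathcal{T}$-case is ``similar to the $\mathcal{C}$-case of the previous section,'' i.e.\ one applies Theorem \ref{ThmPHS} over the base $(\mathcal{T},\mathcal{J}^{(\nabla,I)})$ with parallel section $\pi^{*}J$ and holomorphic tautological section $\phi$, exactly as you outline. Your additional remarks about restricting from $\mathcal{C}$ to the complex submanifold $\mathcal{T}$ and about the survival of the $(1,1)$-curvature condition under pullback are the right justifications for this transfer.
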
 
We also have the subvarieties $\mathcal{T}^{\#}(J)$ that correspond to the $M^{\#}$ of Definition \ref{DefMPOUND} and which will stand for $\mathcal{T}^{(m_{1},*)}(J)$, $\mathcal{T}^{(*,m_{-1})}(J)$ and $\mathcal{T}^{(m_{1},m_{-1})}(J)$.
\begin{nota} When referring to the above subvarieties, we will usually drop the 
``$(J)$'' factors and will denote any one of them by $\mathcal{T}^{\delta}$.
\end{nota} 

Note it is straightforward to show that the $\mathcal{T}^{\#}$ are in fact  
 $C^{\infty}$ fiber bundles. Using this together with the results of Propositions \ref{PropTS} and \ref{PropTESUB}, we arrive at
\begin{prop}
\label{PropCODTAU}
\ \ 
\begin{enumerate} 
  \item[1)] The $\mathcal{T}^{\#}$ are complex submanifolds of $\mathcal{T}$  and have the following codimensions: 
      \begin{enumerate}
               \item[a)]  $codim_{\mathbb{C}} \mathcal{T}^{(m_{1},*)}= \frac{m_{1}(m_{1}-1)}{2}$ \\
                \item[b)]  $codim_{\mathbb{C}} \mathcal{T}^{(*,m_{-1})}= \frac{m_{-1}(m_{-1}-1)}{2}$\\
                \item[c)]  $codim_{\mathbb{C}} \mathcal{T}^{(m_{1},m_{-1})}=     
                       \frac{m_{1}(m_{1}-1) + m_{-1}(m_{-1}-1) }{2}$.
      \end{enumerate}      
\item[]
\item[2)] $\overline{\mathcal{T}^{(m_{1},*)}}= \bigcup_{k\in \mathbb{Z}_{\geq 0}} \mathcal{T}^{(m_{1}+2k,*)}$ and is a holomorphic subvariety of codimension $\frac{m_{1}(m_{1}-1)}{2}$. 
\item[3)] $\mathcal{T}_{(\leq r,+)}=\mathcal{T}^{(\geq n-r, *)}= \overline{\mathcal{T}^{(n-r,*)}}\cup \overline{\mathcal{T}^{(n-r+1,*)}}. $
\end{enumerate}
Parts 2) and 3) are also true if we were to replace $\mathcal{T}_{(\leq r,+)}$ with $\mathcal{T}_{(\leq r,-)}$ and $\mathcal{T}^{(m_{1},*)}$ with $\mathcal{T}^{(*,m_{-1})}.$
\end{prop}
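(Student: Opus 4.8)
The plan is to reduce every assertion to the fiberwise vector-space results of Proposition \ref{PropTS}, exploiting the $C^{\infty}$ fiber bundle structure of the $\mathcal{T}^{\#}$ over $M$, whose fiber over $x \in M$ is the vector-space twistor subvariety $\mathcal{T}(E_x,g_x)^{\#}(J_x)$. Throughout I would work in $\nabla$-parallel local frames, in which $J$ is constant (available since $\nabla J=0$) and $\mathcal{T}$ is locally a product $U \times \mathcal{T}(V,g)$ with $\mathcal{T}^{\#}$ corresponding to $U \times \mathcal{T}(V,g)^{\#}$.

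First I would treat Part 1). Since $\nabla J=0$, parallel transport of any $K \in \mathcal{T}^{(m_1,m_{-1})}(J)$ along a curve in $M$ preserves the kernel dimensions of $J \pm K$, so the parallel translate remains in the same $\mathcal{T}^{(m_1,m_{-1})}(J)$; this yields the analog of Proposition \ref{PropTCSPLIT}, namely the splitting $T_K\mathcal{T}^{\#}(J)= V_K\mathcal{T}^{\#}(J) \oplus H^{\nabla}_K\mathcal{T}$. The vertical summand is the tangent space to the fiber $\mathcal{T}(E_x,g_x)^{\#}(J_x)$, a complex subspace by Proposition \ref{PropTS}(2), while $H^{\nabla}_K\mathcal{T} \cong \pi^* T_xM$ is a complex subspace for $\mathcal{J}^{(\nabla,I)}$. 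Hence $T_K\mathcal{T}^{\#}$ is $\mathcal{J}^{(\nabla,I)}$-invariant at every point, and since a smooth real submanifold of a complex manifold with everywhere complex tangent spaces is a complex submanifold, the $\mathcal{T}^{\#}$ are complex submanifolds. Their codimension equals the codimension of the fiber inside $\mathcal{T}(E_x,g_x)$, the shared horizontal directions cancelling; reading off the fiber dimension $\frac{n(n-1)-m_1(m_1-1)}{2}$ from Proposition \ref{PropTS}(2) gives 1a) and 1b), and 1c) follows from the fiberwise transversality of $\mathcal{T}^{(m_1,*)}$ and $\mathcal{T}^{(*,m_{-1})}$ (Proposition \ref{PropTANTAU}(3)), which lifts because the summed tangent spaces $V_K\mathcal{T}^{(m_1,*)} \oplus H$ and $V_K\mathcal{T}^{(*,m_{-1})} \oplus H$ span $V_K\mathcal{T} \oplus H = T_K\mathcal{T}$.

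For Part 2) I would first obtain the closure formula $\overline{\mathcal{T}^{(m_1,*)}} = \bigcup_{k \geq 0}\mathcal{T}^{(m_1+2k,*)}$ fiberwise from Proposition \ref{PropTS}(3) and transfer it to $\mathcal{T}$ using local triviality, so that closure in the total space restricts to closure in each fiber. To see the union is a holomorphic subvariety, I would invoke the parity decomposition $\mathcal{T}= (\bigcup_{m_1 \text{ even}}\mathcal{T}^{(m_1,*)}) \sqcup (\bigcup_{m_1 \text{ odd}}\mathcal{T}^{(m_1,*)})$ into two disjoint open---hence also closed---subsets (Proposition \ref{PropTS}(1)). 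Writing $C$ for the open-and-closed component matching the parity of $m_1$, I would identify $\overline{\mathcal{T}^{(m_1,*)}} = \mathcal{T}_{(\leq n-m_1,+)} \cap C$; since $\mathcal{T}_{(\leq n-m_1,+)}$ is a holomorphic subvariety by Proposition \ref{PropTESUB} and intersecting with an open-and-closed set preserves this, the claim follows. Its codimension is $\frac{m_1(m_1-1)}{2}$ because its top-dimensional stratum is the already-analyzed $\mathcal{T}^{(m_1,*)}$, the higher strata $\mathcal{T}^{(m_1+2k,*)}$ with $k\geq 1$ having strictly larger codimension. Part 3) is then immediate: $\mathcal{T}_{(\leq r,+)}=\mathcal{T}^{(\geq n-r,*)}$ is the translation of $\mathrm{rank}(J+K) \leq 2r$ into $\dim\ker(J+K) \geq 2(n-r)$, and splitting by the parity of $m_1$ and applying the closure formula of Part 2) yields $\overline{\mathcal{T}^{(n-r,*)}} \cup \overline{\mathcal{T}^{(n-r+1,*)}}$; the statements for $\mathcal{T}_{(\leq r,-)}$ and $\mathcal{T}^{(*,m_{-1})}$ follow verbatim with $J$ replaced by $-J$.

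I expect the main obstacle to be the complex-submanifold claim in Part 1): passing from ``each fiber is a complex submanifold'' to ``the total space is a complex submanifold of $\mathcal{T}$'' genuinely requires the tangent-space splitting and the $\mathcal{J}^{(\nabla,I)}$-invariance argument, rather than following formally from the fiber bundle structure alone. A secondary subtlety is that the individual closures $\overline{\mathcal{T}^{(m_1,*)}}$---not merely the union appearing in $\mathcal{T}_{(\leq r,+)}$---are holomorphic subvarieties, which is exactly what intersecting with the open-and-closed parity component supplies.
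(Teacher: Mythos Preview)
Your approach is essentially the paper's: the paper says only that the $\mathcal{T}^{\#}$ are $C^{\infty}$ fiber bundles and then invokes Propositions \ref{PropTS} and \ref{PropTESUB}, and your proposal is a faithful elaboration of exactly that combination. One technical slip to fix: you say you will work in ``$\nabla$-parallel local frames,'' but such frames exist only when $\nabla$ is flat, which is not assumed; what you actually need (and what suffices for the local product identification $\mathcal{T}|_U \cong U \times \mathcal{T}(V,g)$ with $\mathcal{T}^{\#}$ going to $U \times \mathcal{T}(V,g)^{\#}$) is any local orthonormal frame in which $J$ is constant, e.g.\ a $(g,J)$-unitary frame, and this exists simply because $J$ is a smooth section of $\mathcal{T}(E,g)$, independently of $\nabla J=0$. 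The condition $\nabla J=0$ enters instead through the parallel-transport argument you give for the splitting $T_K\mathcal{T}^{\#}=V_K\mathcal{T}^{\#}\oplus H^{\nabla}_K\mathcal{T}$, which is correct as stated; with that adjustment the rest of your argument goes through.
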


Let us now describe $T_{K}\mathcal{T}^{\#}(J)$. First recall that  $\nabla$ induces a splitting of $T\mathcal{T}$ into $V\mathcal{T} \oplus H^{\nabla}\mathcal{T}$, where $V_{K}\mathcal{T}=T_{K}\mathcal{T}(E_{\pi(K)})$ and $H^{\nabla}\mathcal{T}$ is a certain horizontal distribution (see  \cite{Gindi1}). We then have 

\begin{prop} 
\label{propTANT2}
$T_{K}\mathcal{T}^{\#}(J)= V_{K}\mathcal{T}^{\#}(J) \oplus  H_{K}^{\nabla}\mathcal{T}.$
\end{prop}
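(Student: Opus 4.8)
The plan is to reduce the statement for $T_K\mathcal{T}^{\#}(J)$ to the already-established splitting in the $\mathcal{C}$-case (Proposition \ref{PropTCSPLIT}) and the vertical description supplied by Proposition \ref{PropTANTAU}. The two summands on the right-hand side have a clear meaning: $V_K\mathcal{T}^{\#}(J)=T_K\mathcal{T}(E_{\pi(K)})^{\#}(J)$ is the tangent space to the fiberwise subvariety, which was computed in Proposition \ref{PropTANTAU}, and $H_K^{\nabla}\mathcal{T}$ is the horizontal distribution determined by the metric connection $\nabla$. The goal is simply to verify that $T_K\mathcal{T}^{\#}(J)$ contains both, since a dimension count (using that $\mathcal{T}^{\#}$ is a $C^{\infty}$ fiber bundle, and Proposition \ref{PropCODTAU} for the codimensions) then forces equality.

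First I would observe that the inclusion $V_K\mathcal{T}^{\#}(J)\subset T_K\mathcal{T}^{\#}(J)$ is immediate, because the fiber $\mathcal{T}(E_{\pi(K)})^{\#}(J)$ sits inside $\mathcal{T}^{\#}(J)$ as the fiber over $\pi(K)$, so any curve tangent to the fiberwise subvariety is a curve in $\mathcal{T}^{\#}(J)$. The substantive point is the horizontal inclusion $H_K^{\nabla}\mathcal{T}\subset T_K\mathcal{T}^{\#}(J)$, and here I would follow the model of the proof of Proposition \ref{PropTCSPLIT} verbatim. A general element of $H_K^{\nabla}\mathcal{T}$ is realized as $\frac{dK(t)}{dt}|_{t=0}$, where $K(t)$ is the $\nabla$-parallel translate of $K=K(0)$ along a curve in $M$; since $\nabla$ is a metric connection, $K(t)$ remains in $\mathcal{T}$. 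The key input is that $\nabla J=0$, so parallel translation preserves both $J$ and $K$ simultaneously, hence preserves the eigenvalue multiplicities of $J\pm K$, i.e. $\dim\ker(J\pm K)$ is constant along the path. Therefore $K(t)$ stays in a fixed $\mathcal{T}^{(m_1,m_{-1})}(J)$, and in particular in the relevant $\mathcal{T}^{\#}(J)$, which shows the velocity vector lies in $T_K\mathcal{T}^{\#}(J)$.

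Having both inclusions, I would note that $V_K\mathcal{T}^{\#}(J)$ and $H_K^{\nabla}\mathcal{T}$ meet only in the zero vector (they are the vertical and horizontal pieces of the ambient splitting $T_K\mathcal{T}=V_K\mathcal{T}\oplus H_K^{\nabla}\mathcal{T}$), so their sum inside $T_K\mathcal{T}^{\#}(J)$ is direct. To upgrade the inclusion $V_K\mathcal{T}^{\#}(J)\oplus H_K^{\nabla}\mathcal{T}\subset T_K\mathcal{T}^{\#}(J)$ to an equality, I would compare dimensions: since $\mathcal{T}^{\#}(J)$ is a $C^{\infty}$ fiber bundle over $M$ with fiber $\mathcal{T}(E_{\pi(K)})^{\#}(J)$, its dimension is $\dim_{\mathbb{R}}M$ plus the fiber dimension, while $\dim H_K^{\nabla}\mathcal{T}=\dim_{\mathbb{R}}M$ and $\dim V_K\mathcal{T}^{\#}(J)$ equals the fiber dimension. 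Thus the direct sum already has the full dimension of $T_K\mathcal{T}^{\#}(J)$, forcing equality and completing the proof.

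The main obstacle, as in the $\mathcal{C}$-case, is making precise that the horizontal lift via parallel transport actually stays inside $\mathcal{T}^{\#}(J)$; everything hinges on the observation that $\nabla J=0$ together with metric compatibility keeps the ranks of $J\pm K$ locally constant along parallel paths. Once this is in hand the argument is essentially formal, reusing Proposition \ref{PropTCSPLIT} and Proposition \ref{PropTANTAU} with the appropriate metric-case dimension bookkeeping from Proposition \ref{PropCODTAU}.
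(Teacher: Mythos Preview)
Your proposal is correct and follows essentially the same approach as the paper: the paper simply says the proof is similar to that of Proposition~\ref{PropTCSPLIT}, i.e., horizontal vectors are tangent because $\nabla$-parallel transport of $K$ along curves in $M$ keeps $K(t)$ in a fixed $\mathcal{T}^{(m_{1},m_{-1})}(J)$ (since $\nabla J=0$ and $\nabla$ is metric), while the vertical inclusion is immediate and a dimension count finishes the argument. Your write-up is more detailed but the content is the same.
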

Note that $V_{K}\mathcal{T}^{\#}(J)= T_{K}\mathcal{T}(E_{\pi(K)})^{\#}(J)$ was already described in Proposition \ref{PropTANTAU}.
\begin{proof}[Proof of Proposition \ref{propTANT2}]
The proof is similar to that of Proposition \ref{PropTCSPLIT}.
\end{proof}
\section{General Theorems about the $M^{\delta}$}
\label{SecGTM}
\subsection{A Twistor Point of View}
\label{SecATVP}
Let us return to the general setup of Section \ref{SecHSMAIN} and consider the respective parallel and holomorphic sections $J$ and $K$ of $(\mathcal{C}(E),\mathcal{J}^{(\nabla,I)}) \longrightarrow (M,I)$.  We are now prepared to carry out the set of ideas that were laid out at the end of that section---to  realize the $M^{\delta}:=\{M_{\leq s}, M_{(\leq r, \pm)}, M^{\#} \}$ as the intersection of certain complex submanifolds and subvarieties in $\mathcal{C}$ and to use this point of view to derive a number of corollaries about the $M^{\delta}$.  
 
 To begin, we will realize the $M^{\#}$ as the intersection of complex submanifolds in $\mathcal{C}$: First note that since $\nabla J=0$, by the previous section the $\mathcal{C}^{\#}(J)$ are complex submanifolds of $\mathcal{C}$. Secondly, since 
 $K:M \longrightarrow \mathcal{C}$ is holomorphic, we can use it to holomorphically embed $M$ into $\mathcal{C}$. We then have: 
 \begin{equation*}
K(M^{\#})= K(M) \cap \mathcal{C}^{\#}(J)
\end{equation*}
or alternatively
\begin{equation}
\label{EqINI}
M^{\#}= K^{-1}(\mathcal{C}^{\#}(J)). 
\end{equation}

To realize the other $M^{\delta}$ inside of $\mathcal{C}$, note that in the above equations we can respectively replace $M^{\#}$ with $M_{\leq s}$ or $M_{(\leq r, \pm)}$ and $\mathcal{C}^{\#}(J)$ with $\mathcal{C}_{\leq s}(J)$ or $\mathcal{C}_{(\leq r, \pm)}(J)$.

If we now equip $E$ with a fiberwise metric $g$ and choose $\nabla$ to be a metric connection and $J,K \in \Gamma{(\mathcal{T})}$ then it is clear that Equation \ref{EqINI} and its surrounding discussion would still be true if we were to replace $\mathcal{C}$ with $\mathcal{T}$. This then allows us to  view the $M^{\delta}$ as the intersection of complex submanifolds and subvarieties inside the twistor space $\mathcal{T}$. 

By using the properties of the subvarieties $\mathcal{C}^{\delta}$ and $\mathcal{T}^{\delta}$ which were derived in the previous section, we will now demonstrate two of our main corollaries of  this twistorial point of view of the $M^{\delta}$.

\subsection{Bounds on the $M^{\delta}$}
\label{SecBINT}
As a first corollary, we will bound the dimensions of the $M^{\delta}$. We will find that the bounds depend on whether $J$ and $K$ are sections of $\mathcal{C}$ or of $ \mathcal{T}$.

\subsubsection{$\mathcal{C}-$case}
Let us consider the setup of Theorem \ref{ThmPHS} where $J$ and $K$ are respectively parallel and holomorphic sections  of $(\mathcal{C}(E),\mathcal{J}^{(\nabla,I)}) \longrightarrow (M,I)$. Since $K:M \longrightarrow \mathcal{C}$ is a holomorphic map and, by the above discussion, $M_{(\leq r, \pm)}= K^{-1}(\mathcal{C}_{(\leq r, \pm)}(J))$, it follows from general theory that if $M_{(\leq r, \pm)}$ is nonempty in $M$ then the $codimM_{(\leq r, \pm)} \leq codim \mathcal{C}_{(\leq r, \pm)}(J)$. As we have already determined the codimensions of the $\mathcal{C}_{(\leq r, \pm)}(J)$ in Proposition \ref{PropCODCE}, we have

\begin{prop} 
\label{PropCBOND}
Let $dim_{\mathbb{C}}M=m$. If $M_{(\leq r, \pm)}$ is nonempty then the complex dimension of each of its components is $\geq m- (n-r)^{2}$.
\end{prop}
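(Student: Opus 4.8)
The plan is to obtain the dimension bound as an immediate consequence of the twistorial realization $M_{(\leq r,\pm)} = K^{-1}(\mathcal{C}_{(\leq r,\pm)}(J))$ established in Section \ref{SecATVP}, combined with the codimension computation of Proposition \ref{PropCODCE}. The key structural input is that $K : (M,I) \longrightarrow (\mathcal{C},\mathcal{J}^{(\nabla,I)})$ is a holomorphic map between complex manifolds, and that $\mathcal{C}_{(\leq r,\pm)}(J)$ is a holomorphic subvariety of $\mathcal{C}$ of codimension $(n-r)^2$. The general principle I would invoke is the standard fact that the preimage of a holomorphic subvariety under a holomorphic map cannot gain in codimension: if $\mathcal{C}_{(\leq r,\pm)}(J)$ is locally cut out by $(n-r)^2$ holomorphic functions $f_1,\dots,f_{(n-r)^2}$, then $M_{(\leq r,\pm)}$ is locally the common zero locus of the $(n-r)^2$ pullbacks $K^* f_1,\dots,K^* f_{(n-r)^2}$ near any point of $M_{(\leq r,\pm)}$.

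First I would fix a point $x \in M_{(\leq r,\pm)}$, so that $K(x) \in \mathcal{C}_{(\leq r,\pm)}(J)$, and pass to a coordinate neighborhood of $K(x)$ in $\mathcal{C}$ on which the subvariety is defined by the vanishing of $(n-r)^2$ holomorphic functions (this number being its codimension by Proposition \ref{PropCODCE}). Pulling these back along the holomorphic map $K$ exhibits a neighborhood of $x$ in $M_{(\leq r,\pm)}$ as the zero set of $(n-r)^2$ holomorphic functions on an $m$-dimensional complex manifold. Then I would apply the basic dimension estimate from the theory of analytic sets (or equivalently Krull's height theorem for the local ring $\mathcal{O}_{M,x}$): the zero locus of $(n-r)^2$ holomorphic functions has every irreducible component of dimension at least $m - (n-r)^2$ at $x$. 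Since $x$ was an arbitrary point of $M_{(\leq r,\pm)}$, every component has dimension bounded below by $m - (n-r)^2$, which is the claimed bound.

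The step requiring the most care is verifying that the local defining equations for $\mathcal{C}_{(\leq r,\pm)}(J)$ can indeed be taken to be exactly $(n-r)^2$ in number at points where $K$ meets the subvariety; this is precisely what Proposition \ref{PropCODCE} supplies, since it identifies the codimension of $\mathcal{C}_{(\leq r,\pm)}(J)$ as a holomorphic subvariety of $\mathcal{C}$ to be $(n-r)^2$. I would remark that one does \emph{not} need any transversality of $K$ with the subvariety: the lower bound on dimension is a purely local algebraic consequence of writing $M_{(\leq r,\pm)}$ as the vanishing of that many functions, and transversality would instead be what is needed to get the reverse (upper) bound or an equality. I would also note that nonemptiness of $M_{(\leq r,\pm)}$ is assumed precisely so that there is a point $x$ at which to run the argument, and that the bound is stated componentwise because distinct irreducible components may have different dimensions.

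\begin{rmk}
The same argument applies verbatim with $\mathcal{C}_{(\leq r,\pm)}(J)$ replaced by $\mathcal{C}_{\leq s}(J)$ or by any of the $\mathcal{C}^{\#}(J)$, once their codimensions are inserted in place of $(n-r)^2$; in the metric case one replaces $\mathcal{C}$ by $\mathcal{T}$ and uses the codimensions from Proposition \ref{PropCODTAU} instead.
\end{rmk}
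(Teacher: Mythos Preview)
Your approach matches the paper's: realize $M_{(\leq r,\pm)}=K^{-1}(\mathcal{C}_{(\leq r,\pm)}(J))$, invoke Proposition~\ref{PropCODCE} for the codimension $(n-r)^2$, and appeal to the general fact that pulling back a holomorphic subvariety along a holomorphic map cannot increase codimension. The paper records this last step simply as ``general theory'' and, in the remark immediately following, offers an alternative proof by applying the degeneracy-locus lemma directly to the holomorphic bundle maps of Proposition~\ref{PropBUNM}.

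There is, however, a gap in the way you justify the general principle. You assert that $\mathcal{C}_{(\leq r,\pm)}(J)$ is locally cut out by exactly $(n-r)^2$ holomorphic functions \emph{because} its codimension is $(n-r)^2$, and you explicitly say Proposition~\ref{PropCODCE} supplies this. But knowing the codimension does not tell you the number of local defining equations; that would require $\mathcal{C}_{(\leq r,\pm)}(J)$ to be a local complete intersection. Via Proposition~\ref{PropCGR} this subvariety corresponds to a determinantal (Schubert-type) locus in $Gr_n(V_{\mathbb C})$, and such loci are generally \emph{not} local complete intersections at their singular points (for instance, the rank~$\leq 1$ locus of $3\times 3$ matrices has codimension $4$ but cannot be cut out by four equations near the origin). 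So your Krull-height argument is not available at precisely the points where $K(M)$ might meet the singular locus of $\mathcal{C}_{(\leq r,\pm)}(J)$.

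The fix is to replace the local-equations argument with the standard intersection inequality for analytic subsets of a complex manifold: every irreducible component of $Z_1\cap Z_2$ has dimension at least $\dim Z_1+\dim Z_2-\dim Y$. Applied with $Z_1=K(M)$ and $Z_2=\mathcal{C}_{(\leq r,\pm)}(J)$ inside $\mathcal{C}$, this yields the bound with no LCI hypothesis, and is presumably the ``general theory'' the paper has in mind. Alternatively, one can bypass the twistor space entirely and use the degeneracy-locus lemma from the paper's remark applied to the maps of Proposition~\ref{PropBUNM}, since that lemma is tailored to determinantal loci and gives exactly the codimension bound $(n-r)^2$.
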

\begin{rmk}
The above proposition can also be proved in another way by applying the following lemma (see \cite{Fulton1}) to the first two holomorphic bundle maps given in Proposition \ref{PropBUNM}.

\begin{lemma} Let $E$ and $F$ be two rank $n$ holomorphic vector bundles fibered over a complex manifold $N$ and let $A:E \longrightarrow F$ be a holomorphic bundle map. If $N_{\leq k}:=\{x \in N | \ RankA|_{x} \leq k\}$ is nonempty then its complex codimension is $\leq (n-k)^{2}$. 
\end{lemma}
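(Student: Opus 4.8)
The plan is to prove the degeneracy locus bound for a holomorphic bundle map $A: E \longrightarrow F$ between rank $n$ holomorphic vector bundles by reducing the global statement to a local, pointwise computation in holomorphic charts. First I would observe that the statement is local on $N$, since codimension is computed componentwise and can be checked in a neighborhood of any point $x_0 \in N_{\leq k}$. Over such a neighborhood both $E$ and $F$ are holomorphically trivial, so $A$ becomes a holomorphic map $\tilde{A}: U \longrightarrow \mathrm{Mat}_{n\times n}(\mathbb{C})$ from an open set $U \subset \mathbb{C}^m$ (where $m = \dim_{\mathbb{C}} N$) into the space of $n\times n$ complex matrices, and $N_{\leq k} \cap U = \tilde{A}^{-1}(\Sigma_{\leq k})$, where $\Sigma_{\leq k} = \{B \in \mathrm{Mat}_{n\times n}(\mathbb{C}) \mid \mathrm{Rank}\, B \leq k\}$ is the determinantal variety of matrices of rank at most $k$.

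The key geometric input is then the well-known fact that $\Sigma_{\leq k}$ is an irreducible closed algebraic subvariety of $\mathrm{Mat}_{n\times n}(\mathbb{C}) = \mathbb{C}^{n^2}$ of codimension exactly $(n-k)^2$. I would recall the standard dimension count: a generic element of $\Sigma_{\leq k}$ has rank exactly $k$, and the rank-$k$ locus is smooth of dimension $n^2 - (n-k)^2$; one sees this, for instance, by parametrizing rank-$k$ matrices via a choice of $k$-dimensional column space (a $\mathrm{Gr}_k(\mathbb{C}^n)$ worth of data, of dimension $k(n-k)$) together with a surjection $\mathbb{C}^n \twoheadrightarrow \mathbb{C}^k$ onto the corresponding coordinates ($nk$ parameters minus the $k^2$ redundancy of $\mathrm{GL}_k$), giving total dimension $k(n-k) + nk - k^2 + \cdots$, which simplifies to $n^2 - (n-k)^2$. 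The upshot is the single inequality $\mathrm{codim}_{\mathbb{C}}\,\Sigma_{\leq k} = (n-k)^2$, which I would cite from \cite{Fulton1} rather than reprove.

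With these two ingredients the conclusion follows from the general principle that holomorphic preimages do not increase codimension: if $f: U \longrightarrow W$ is holomorphic and $Z \subset W$ is a subvariety, then each component of $f^{-1}(Z)$ has $\mathrm{codim}\, f^{-1}(Z) \leq \mathrm{codim}\, Z$. Applying this with $f = \tilde{A}$, $W = \mathbb{C}^{n^2}$ and $Z = \Sigma_{\leq k}$ yields $\mathrm{codim}_{\mathbb{C}}(N_{\leq k} \cap U) \leq (n-k)^2$ near every point, hence the global bound. I expect the only genuinely substantive point to be the codimension computation for the determinantal variety $\Sigma_{\leq k}$; everything else—the local triviality, the reduction to a matrix-valued map, and the codimension-preservation principle for preimages—is formal and is exactly the content for which the lemma is attributed to \cite{Fulton1}. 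One subtlety worth flagging is that the lemma as stated bounds the codimension of $N_{\leq k}$ and is silent on whether the locus is nonempty or equidimensional; the bound should be read as holding for each irreducible component of $N_{\leq k}$, so I would phrase the preimage step componentwise to match the hypothesis ``if $M_{(\leq r,\pm)}$ is nonempty'' in Proposition \ref{PropCBOND}.
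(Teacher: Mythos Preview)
Your approach is correct, but note that the paper does not actually supply a proof of this lemma: it is stated inside a remark with the parenthetical ``(see \cite{Fulton1})'' and is treated as a standard result from intersection theory. So there is no proof in the paper to compare against; the lemma is invoked, not established.

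That said, what you outline is exactly the standard argument behind the cited result. The reduction to a local model by trivializing $E$ and $F$, the identification of $N_{\leq k}$ with the preimage of the determinantal variety $\Sigma_{\leq k}\subset \mathrm{Mat}_{n\times n}(\mathbb{C})$, the codimension computation $\mathrm{codim}\,\Sigma_{\leq k}=(n-k)^{2}$, and the principle that holomorphic preimages can only lower codimension---each of these is correct and together they give the lemma. Your dimension count for the rank-$k$ locus is a little informally phrased but lands on the right number $2nk-k^{2}=n^{2}-(n-k)^{2}$. The componentwise reading you flag is also the right one and matches how the paper uses the lemma in Proposition~\ref{PropCBOND}.
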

\end{rmk}

Either by using an analysis that is similar to the one first used to derive Proposition \ref{PropCBOND} or by deriving it directly from that proposition, we have:

\begin{prop}
\label{PropCBONDD}
Let $dim_{\mathbb{C}}M=m$. If $M^{\#}$ is nonempty then the complex dimension of each of its components is bounded as follows:
\begin{align*}
1)& \ dimM^{(m_{1},*)} \geqslant m- m_{1}^{2}\\
2)& \ dimM^{(*,m_{-1})} \geqslant m- m_{-1}^{2}\\
3)& \ dimM^{(m_{1},m_{-1})} \geqslant m- m_{1}^{2}-m_{-1}^{2}.
\end{align*}
\end{prop}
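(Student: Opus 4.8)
The plan is to mirror the proof of Proposition~\ref{PropCBOND}, now using the fact that the relevant loci in $\mathcal{C}$ are honest complex submanifolds rather than merely subvarieties. By Equation~\ref{EqINI} and the discussion surrounding it we have
\begin{align*}
M^{(m_{1},*)}&=K^{-1}\bigl(\mathcal{C}^{(m_{1},*)}(J)\bigr), & M^{(*,m_{-1})}&=K^{-1}\bigl(\mathcal{C}^{(*,m_{-1})}(J)\bigr),\\
M^{(m_{1},m_{-1})}&=K^{-1}\bigl(\mathcal{C}^{(m_{1},m_{-1})}(J)\bigr),
\end{align*}
where $K:M\longrightarrow\mathcal{C}$ is the holomorphic map coming from the holomorphic section $K$. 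Since $\nabla J=0$, Proposition~\ref{PropCODCE} guarantees that each of these $\mathcal{C}^{\#}(J)$ is a complex submanifold of $(\mathcal{C},\mathcal{J}^{(\nabla,I)})$, of complex codimension $m_{1}^{2}$, $m_{-1}^{2}$ and $m_{1}^{2}+m_{-1}^{2}$ respectively.

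First I would record the general principle underlying Proposition~\ref{PropCBOND}: if $f:X\longrightarrow Y$ is a holomorphic map of complex manifolds and $Z\subset Y$ is a complex submanifold of codimension $c$, then, whenever $f^{-1}(Z)$ is nonempty, each of its irreducible components has codimension at most $c$ in $X$. The justification is local and elementary: near any point of $Z$ one may choose holomorphic functions $g_{1},\dots,g_{c}$ whose common zero locus is $Z$, so that $f^{-1}(Z)$ is locally the common zero locus of the $c$ holomorphic functions $g_{1}\circ f,\dots,g_{c}\circ f$; by the dimension theory of analytic sets, the zero locus of $c$ holomorphic functions has every component of codimension at most $c$.

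Then I would apply this principle directly to $K:M\longrightarrow\mathcal{C}$ with $\dim_{\mathbb{C}}M=m$, taking $Z$ to be $\mathcal{C}^{(m_{1},*)}(J)$, $\mathcal{C}^{(*,m_{-1})}(J)$ and $\mathcal{C}^{(m_{1},m_{-1})}(J)$ in turn. Combined with the codimensions from Proposition~\ref{PropCODCE}, the resulting bound ``codimension $\leq c$'' is exactly ``dimension $\geq m-c$'' on each nonempty component, which yields the three asserted inequalities. As an alternative for Part~3), since $M^{(m_{1},m_{-1})}=M^{(m_{1},*)}\cap M^{(*,m_{-1})}$, one may instead combine Parts~1) and~2) with the local fact that every component of an intersection $A\cap B$ of analytic germs in an $m$-dimensional manifold has dimension at least $\dim A+\dim B-m$.

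I do not expect any serious difficulty here, since the argument is a transcription of the codimension bookkeeping already used for Proposition~\ref{PropCBOND}. The only points that must be handled with care are that the preimage principle bounds only nonempty components (hence the nonemptiness hypothesis in the statement) and that it applies to the $\mathcal{C}^{\#}(J)$ precisely because these are genuine complex submanifolds of $\mathcal{C}$, as asserted by Proposition~\ref{PropCODCE}. This last point is exactly what lets us bound $M^{\#}$ more sharply than the corresponding subvariety $M_{(\leq r,\pm)}$ of Proposition~\ref{PropCBOND}.
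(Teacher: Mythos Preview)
Your proposal is correct and matches the first of the two approaches the paper itself indicates just before the proposition: pulling back the codimension of the complex submanifolds $\mathcal{C}^{\#}(J)$ along the holomorphic section $K$, exactly as in the proof of Proposition~\ref{PropCBOND}. The paper also notes one can derive the result directly from Proposition~\ref{PropCBOND} (since $M^{(m_{1},*)}$ is open in $M_{(\leq n-m_{1},+)}$), but your route is the one the paper spells out first.
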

\subsubsection{$\mathcal{T}$-case} \label{Sec7.2.2}
Let us now consider the setup of Section \ref{SecTHSMC} so that $J$ and $K$ are respectively parallel and holomorphic sections  of $(\mathcal{T}(E,g),\mathcal{J}^{(\nabla,I)}) \longrightarrow (M,I)$. We will first focus on the $M_{(\leq r, +)}$. By using Proposition \ref{PropM'P}, we may split it into two disjoint open  subsets:
\[M_{(\leq r,+)}= M_{even}^{(\geq n-r,*)} \cup M_{even}^{(\geq n-r+1,*)},\]
where 
 \[M_{even}^{(\geq m_{1},*)}:= \bigcup_{k\in \mathbb{Z}_{\geq 0}} M^{(m_{1}+2k,*)}. \]

To bound the dimensions of $M_{even}^{(\geq m_{1},*)}$, we will express this subvariety as $K^{-1}(\mathcal{T}_{even}^{(\geq m_{1},*)}(J)),$ where $\mathcal{T}_{even}^{(\geq m_{1},*)}(J)$ is analogously defined and is, by Proposition \ref{PropCODTAU}, a holomorphic subvariety of $\mathcal{T}$.  As in the previous section, it then follows  that the $codim M_{even}^{(\geq m_{1},*)} \leq codim\mathcal{T}_{even}^{(\geq m_{1},*)}(J)$ and since we have already determined the codimensions of the $\mathcal{T}_{even}^{(\geq m_{1},*)}(J)$ in Proposition \ref{PropCODTAU}, we obtain: 

\begin{thm} Let $dim_{\mathbb{C}}M=m$. If $M_{even}^{(\geq m_{1},*)}$ is nonempty then the complex dimension of each of its components is $\geq m- \frac{m_{1}(m_{1}-1)}{2}$.

An analogous statement is true for $M_{even}^{(*,\geq m_{-1})}.$
\end{thm}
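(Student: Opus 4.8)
The plan is to realize $M_{even}^{(\geq m_{1},*)}$ as the preimage, under the holomorphic embedding $K:M\longrightarrow\mathcal{T}$, of a holomorphic subvariety whose codimension has already been computed, and then to invoke the general fact that the codimension of such a preimage is bounded above by the codimension of the target. This is precisely the strategy used in the $\mathcal{C}$-case to prove Proposition \ref{PropCBOND}, now transported to the metric twistor space $\mathcal{T}$ and to the refined even strata.

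First I would establish the preimage identity. Fiberwise, a point $x\in M$ lies in $M^{(m_{1}+2k,*)}$ exactly when $dim\,ker(J_{x}+K_{x})=2(m_{1}+2k)$, i.e. exactly when $K_{x}\in\mathcal{T}^{(m_{1}+2k,*)}(J_{x})$ inside the fiber $\mathcal{T}(E_{x},g_{x})$. Hence, in the total-space formulation of Equation \ref{EqINI}, $M^{(m_{1}+2k,*)}=K^{-1}\bigl(\mathcal{T}^{(m_{1}+2k,*)}(J)\bigr)$ for each $k$. Taking the union over $k\in\mathbb{Z}_{\geq 0}$ and commuting it past $K^{-1}$ gives
\[
M_{even}^{(\geq m_{1},*)}=K^{-1}\Bigl(\bigcup_{k\in\mathbb{Z}_{\geq 0}}\mathcal{T}^{(m_{1}+2k,*)}(J)\Bigr)=K^{-1}\bigl(\mathcal{T}_{even}^{(\geq m_{1},*)}(J)\bigr).
\]
By Proposition \ref{PropCODTAU}, Part 2), the target $\mathcal{T}_{even}^{(\geq m_{1},*)}(J)=\overline{\mathcal{T}^{(m_{1},*)}(J)}$ is a genuine holomorphic subvariety of $\mathcal{T}$ of codimension $\frac{m_{1}(m_{1}-1)}{2}$.

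Next I would apply the preimage codimension bound. Since $K$ is holomorphic and the target is a holomorphic subvariety, each nonempty component $S$ of $M_{even}^{(\geq m_{1},*)}$ satisfies $codim\,S\leq codim\,\mathcal{T}_{even}^{(\geq m_{1},*)}(J)=\frac{m_{1}(m_{1}-1)}{2}$, and therefore $dim\,S\geq m-\frac{m_{1}(m_{1}-1)}{2}$. The cleanest uniform justification of this bound is the graph argument: one identifies $K^{-1}(Z)$ with $\Gamma_{K}\cap(M\times Z)$ inside $M\times\mathcal{T}$, where $\Gamma_{K}$ is the graph of $K$, and then applies the principle (noted in the introduction) that the codimension of the intersection of two holomorphic subvarieties never exceeds the sum of their codimensions.

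The point to watch — rather than a genuine obstacle — is that $\mathcal{T}_{even}^{(\geq m_{1},*)}(J)=\overline{\mathcal{T}^{(m_{1},*)}(J)}$ is in general singular, being the closure of a single stratum, so one must use the version of the codimension estimate valid for arbitrary (possibly singular) holomorphic subvarieties rather than any statement requiring the target to be smooth; the graph-intersection argument supplies exactly this. Finally, the analogous bound for $M_{even}^{(*,\geq m_{-1})}$ follows verbatim after interchanging the roles of $J$ and $-J$, which swaps $ker(J+K)$ with $ker(J-K)$ and hence $M^{(m_{1},*)}$ with $M^{(*,m_{-1})}$; the corresponding target codimension $\frac{m_{-1}(m_{-1}-1)}{2}$ is recorded in the last line of Proposition \ref{PropCODTAU}.
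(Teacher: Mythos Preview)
Your proposal is correct and follows essentially the same route as the paper: express $M_{even}^{(\geq m_{1},*)}$ as $K^{-1}\bigl(\mathcal{T}_{even}^{(\geq m_{1},*)}(J)\bigr)$, invoke Proposition \ref{PropCODTAU} to identify the target as a holomorphic subvariety of codimension $\frac{m_{1}(m_{1}-1)}{2}$, and then apply the general preimage codimension bound. Your added details (the explicit graph-intersection justification and the remark about singularity of the target) are reasonable elaborations but do not depart from the paper's argument.
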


Either by using a similar analysis or by deriving it directly from the above theorem, we obtain:
\begin{thm}
\label{ThmBOUNDS}
If $M^{\#}$ is nonempty then the complex dimension of each of its components is bounded as follows:
\begin{align*}
1)& \ dimM^{(m_{1},*)} \geqslant m- \frac{m_{1}(m_{1}-1)}{2}\\
2)& \ dimM^{(*,m_{-1})} \geqslant m- \frac{m_{-1}(m_{-1}-1)}{2}\\
3)& \ dimM^{(m_{1},m_{-1})} \geqslant m- \frac{m_{1}(m_{1}-1)+ m_{-1}(m_{-1}-1)}{2}.
\end{align*}
\end{thm}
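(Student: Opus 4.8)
The plan is to mimic the derivation of Proposition \ref{PropCBOND} and of the preceding theorem, exploiting the twistor realization $M^{\#}=K^{-1}(\mathcal{T}^{\#}(J))$ of Section \ref{SecATVP} together with the codimension count of Proposition \ref{PropCODTAU}. The single structural fact I will invoke is the standard principle that a holomorphic map cannot raise codimension beyond that of the target locus: if $f\colon X\to Y$ is holomorphic and $Z\subset Y$ is locally the common zero set of $c$ holomorphic functions, then $f^{-1}(Z)$, where nonempty, has codimension at most $c$ in $X$, so each of its components has complex dimension at least $\dim_{\mathbb{C}}X-c$.

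For part 1) I would argue as follows. By Section \ref{SecATVP} the map $K\colon M\to\mathcal{T}$ is holomorphic and $M^{(m_{1},*)}=K^{-1}(\mathcal{T}^{(m_{1},*)}(J))$. Proposition \ref{PropCODTAU} shows that $\mathcal{T}^{(m_{1},*)}(J)$ is a complex submanifold of codimension $c=\tfrac{m_{1}(m_{1}-1)}{2}$, so near each of its points it is the transverse zero locus of $c$ holomorphic functions. Pulling these back by $K$ exhibits $M^{(m_{1},*)}$ locally as the common zero set of $c$ holomorphic functions on $M$, whence by the principle above each component of $M^{(m_{1},*)}$, when the locus is nonempty, has dimension at least $m-\tfrac{m_{1}(m_{1}-1)}{2}$. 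Part 2) is identical after exchanging the roles of $J+K$ and $J-K$, using $\mathcal{T}^{(*,m_{-1})}(J)$ of codimension $\tfrac{m_{-1}(m_{-1}-1)}{2}$. For part 3) I would take $\mathcal{T}^{(m_{1},m_{-1})}(J)=\mathcal{T}^{(m_{1},*)}(J)\cap\mathcal{T}^{(*,m_{-1})}(J)$, which by Proposition \ref{PropCODTAU} is a complex submanifold of codimension $\tfrac{m_{1}(m_{1}-1)+m_{-1}(m_{-1}-1)}{2}$, and apply the same pullback argument to $M^{(m_{1},m_{-1})}=K^{-1}(\mathcal{T}^{(m_{1},m_{-1})}(J))$.

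Alternatively, the result follows directly from the preceding theorem. Since $\mathcal{T}^{(m_{1},*)}(J)$ is the open (maximal-rank) stratum of its closure $\overline{\mathcal{T}^{(m_{1},*)}(J)}$, the locus $M^{(m_{1},*)}=K^{-1}(\mathcal{T}^{(m_{1},*)}(J))$ is open in $M_{even}^{(\geq m_{1},*)}=K^{-1}(\overline{\mathcal{T}^{(m_{1},*)}(J)})$. Hence at every point $p\in M^{(m_{1},*)}$ the two loci coincide locally, so the local dimension of $M^{(m_{1},*)}$ at $p$ equals that of $M_{even}^{(\geq m_{1},*)}$, which the preceding theorem bounds below by $m-\tfrac{m_{1}(m_{1}-1)}{2}$. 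Parts 2) and 3) follow by the same openness argument.

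The proof is thus essentially bookkeeping once the two genuinely substantive inputs are in hand: the exact codimensions of the submanifolds $\mathcal{T}^{\#}(J)$ (established in Proposition \ref{PropCODTAU}) and the holomorphicity of $K$ (Theorem \ref{ThmPHS} and Section \ref{SecATVP}). The only point requiring mild care is that the $\mathcal{T}^{\#}(J)$ are complex submanifolds rather than closed subvarieties, so one must either pass to local defining equations or, in the second route, invoke the openness of $M^{(m_{1},*)}$ inside $M_{even}^{(\geq m_{1},*)}$; I expect this to be the main, though minor, obstacle rather than any serious analytic difficulty.
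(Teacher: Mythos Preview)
Your proposal is correct and is precisely the approach the paper takes: the paper states that Theorem \ref{ThmBOUNDS} follows ``either by using a similar analysis'' (your first route, pulling back local defining equations of the complex submanifolds $\mathcal{T}^{\#}(J)$ via the holomorphic section $K$ and invoking Proposition \ref{PropCODTAU}) ``or by deriving it directly from the above theorem'' (your second route, using that $M^{(m_{1},*)}$ is open in $M_{even}^{(\geq m_{1},*)}$). One small remark: in your second route, part 3) does not follow from openness alone, since the preceding theorem bounds $M_{even}^{(\geq m_{1},*)}$ and $M_{even}^{(*,\geq m_{-1})}$ separately but not their intersection; you would need either to add the standard codimension-of-intersection inequality or simply fall back on your first route for that case.
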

We will now list some cases that we will focus on in Section \ref{SecBM}.

\begin{cor}
\label{CorBOUNDS}
The following are bounds on the complex dimensions of some of the $M^{\#}$:
\begin{align*}
1) & \ dimM^{(1,2)}, dimM^{(2,1)} \geqslant m-1\\
2) & \ dimM^{(2,2)} \geqslant m-2 \\
3) & \ dimM^{(2,3)}, dimM^{(3,2)} \geqslant m-4 \\
4) & \ dimM^{(*,2)}, dimM^{(2,*)} \geqslant m-1.
\end{align*}
\end{cor}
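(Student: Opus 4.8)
The plan is to obtain each bound as a direct specialization of Theorem \ref{ThmBOUNDS}, since every subvariety listed in the corollary is an $M^{\#}$ of exactly the form treated there. The only work involved is to substitute the relevant integers $(m_{1},m_{-1})$ into the appropriate part of that theorem and to evaluate the quantity $\tfrac{1}{2}\big(m_{1}(m_{1}-1)+m_{-1}(m_{-1}-1)\big)$.

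For parts 1)--3) of the corollary I would invoke part 3) of Theorem \ref{ThmBOUNDS}. Taking $(m_{1},m_{-1})=(1,2)$ gives $\tfrac{1}{2}(1\cdot 0 + 2\cdot 1)=1$, so $\dim M^{(1,2)}\geq m-1$, and by the symmetry of the formula under interchanging $m_{1}$ and $m_{-1}$ the same value is obtained for $(2,1)$. For $(2,2)$ one computes $\tfrac{1}{2}(2\cdot 1 + 2\cdot 1)=2$, yielding $\dim M^{(2,2)}\geq m-2$; and for $(2,3)$, equivalently $(3,2)$, one gets $\tfrac{1}{2}(2\cdot 1 + 3\cdot 2)=4$, yielding $\dim M^{(2,3)}\geq m-4$. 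For part 4) of the corollary I would instead use parts 1) and 2) of Theorem \ref{ThmBOUNDS}: setting $m_{1}=2$ in part 1) gives $\tfrac{1}{2}(2\cdot 1)=1$, so $\dim M^{(2,*)}\geq m-1$, and setting $m_{-1}=2$ in part 2) gives the matching bound for $M^{(*,2)}$.

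There is no genuine obstacle here: the corollary is purely a tabulation of Theorem \ref{ThmBOUNDS} at the small values of $(m_{1},m_{-1})$ that will be needed in Section \ref{SecBM}. The one point worth recording is that, exactly as in the hypothesis of the theorem, each inequality is asserted for the components of the respective subvariety only when that subvariety is nonempty, so that every bound above is understood to be vacuous otherwise.
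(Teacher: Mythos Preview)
Your proposal is correct and is exactly the approach the paper takes: the corollary is stated without a separate proof, as a direct specialization of Theorem~\ref{ThmBOUNDS} obtained by substituting the indicated values of $(m_{1},m_{-1})$. The paper adds only the side remark that the bounds in 1) also follow from those in 4) because $M^{(1,2)}$ is open in $M^{(*,2)}$, but this is an observation, not an alternative argument.
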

\begin{rmk} Note that the bounds given in 1) follow from those in 4) since if $M^{(1,2)}$ is nonempty then it is open in 
$M^{(*,2)}.$ 
\end{rmk}
\subsection{The $M^{\#}$ along Curves}
\label{SecMAC}
We will now apply the twistor point of view of Section \ref{SecATVP} to derive another corollary about the $M^{\#}$. Unlike the previous one, this corollary will not use the holomorphicity of  the twistor space $\mathcal{C}$, rather it will just use the descriptions of $T_{K}\mathcal{C}^{\#}(J)$ and $T_{K}\mathcal{T}^{\#}(J)$ that were given in Propositions \ref{PropTCSPLIT} and \ref{propTANT2}. To describe it, consider the setup of a rank $2n$ real  vector bundle $E \longrightarrow M$ equipped with a connection $\nabla$. Also let $J$ and $K$ be sections of $\pi:\mathcal{C}(E)\longrightarrow M$ such that $\nabla J=0$.  Note that we do not impose any conditions on $K$ or on the curvature of $\nabla$; nor do we require $M$ to be even dimensional.

Given $x$ in $M^{\#}$ and $v \in T_{x}M$, the goal that we are currently working on is to derive necessary and sufficient  conditions for there to exist a curve $\gamma$ in $M$  such that $\gamma'(0)=v$ and $\gamma(t) \in M^{\#}$ for at least small $t \in \mathbb{R}$. We are interested in these conditions because they can be used to derive both upper and lower bounds on the dimensions of the $M^{\#}$ as described below.

As a first step, we will now show how to use twistor spaces to give a natural geometrical derivation of certain necessary conditions on $\nabla_{v} K$. 

\begin{prop}
\label{PropMPC}
Let $x \in M^{\#}$ and suppose that there is a curve $\gamma$ in $M$ such that $\gamma(0)=x$, $\gamma(t)$ lies in  $M^{\#}$ for small $t \in \mathbb{R}$ and $\gamma'(0)=v$. Then
\begin{align*}
1) & \text{ for }  M^{\#}= M^{(m_{1},*)}, \nabla_{v} K: Ker(J+K) \longrightarrow Im(J+K)\\
2) & \text{ for }  M^{\#}= M^{(*,m_{-1})}, \nabla_{v} K: Ker(J-K) \longrightarrow Im(J-K)\\
3) & \text{ for }  M^{\#}= M^{(m_{1},m_{-1})},  \nabla_{v} K: Ker(J+K) \longrightarrow Im(J+K) \text{ and }\\ 
    &  \ \ \ \ \ \ \ \  \ \ \ \ \ \ \ \ \ \ \ \ \ \ \ \ \ \ \ \  \nabla_{v} K: Ker(J-K) \longrightarrow Im(J-K).
\end{align*}
\end{prop}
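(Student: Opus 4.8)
The plan is to transfer the problem into the twistor space $\mathcal{C}=\mathcal{C}(E)$ and read off the conclusion from the tangent-space description of Proposition \ref{PropTCSPLIT}. Observe first that, purely at the level of sets, $M^{\#}=K^{-1}(\mathcal{C}^{\#}(J))$: the defining condition on $\dim\mathrm{Ker}(J\pm K)|_{x}$ is exactly the condition cutting out the fibrewise locus $\mathcal{C}^{\#}(J)\subset\mathcal{C}$. This identification is set-theoretic and needs neither the holomorphicity of $K$ nor the $(1,1)$ condition on $R^{\nabla}$; likewise, since $\mathcal{C}^{\#}(J)$ is a $C^{\infty}$ fibre bundle whose existence and tangent bundle only use $\nabla J=0$, Propositions \ref{PropTANC} and \ref{PropTCSPLIT} are available in this more relaxed setting.

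First I would push the curve forward: the composite $K\circ\gamma$ is a curve in $\mathcal{C}$ that lies entirely in $\mathcal{C}^{\#}(J)$, so its velocity at $t=0$, namely $dK_{x}(v)\in T_{K}\mathcal{C}$, lies in the subspace $T_{K}\mathcal{C}^{\#}(J)$. Next I would decompose $dK_{x}(v)$ using the $\nabla$-induced splitting $T_{K}\mathcal{C}=V_{K}\mathcal{C}\oplus H^{\nabla}_{K}\mathcal{C}$. The one analytic input here is the standard fact (see \cite{Gindi1}) that the vertical component of the differential of a section is its covariant derivative; that is, the vertical part of $dK_{x}(v)$ equals $\nabla_{v}K\in V_{K}\mathcal{C}=\mathfrak{gl}_{\{K\}}$, while the horizontal part lies in $H^{\nabla}_{K}\mathcal{C}$ by definition of the splitting.

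Now I would invoke Proposition \ref{PropTCSPLIT}, which gives $T_{K}\mathcal{C}^{\#}(J)=V_{K}\mathcal{C}^{\#}(J)\oplus H^{\nabla}_{K}\mathcal{C}$. Since $dK_{x}(v)\in T_{K}\mathcal{C}^{\#}(J)$ and the vertical/horizontal decomposition of $T_{K}\mathcal{C}$ is unique, matching the two direct-sum decompositions forces the vertical part $\nabla_{v}K$ to land in $V_{K}\mathcal{C}^{\#}(J)$. For $M^{\#}=M^{(m_{1},*)}$, Proposition \ref{PropTANC} identifies $V_{K}\mathcal{C}^{(m_{1},*)}(J)=T_{K}\mathcal{C}(E_{x})^{(m_{1},*)}(J)$ with $\{A\in\mathfrak{gl}_{\{K\}}\mid A:\mathrm{Ker}(J+K)\to\mathrm{Im}(J+K)\}$, which is precisely conclusion 1); conclusion 2) follows identically using the $(*,m_{-1})$ half of Proposition \ref{PropTANC}. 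Finally, conclusion 3) is immediate, since $M^{(m_{1},m_{-1})}\subset M^{(m_{1},*)}\cap M^{(*,m_{-1})}$ forces $\gamma$ to lie in both loci, so both mapping conditions on $\nabla_{v}K$ hold simultaneously.

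The step I expect to require the most care is the identification of the vertical component of $dK_{x}(v)$ with $\nabla_{v}K$ — i.e.\ pinning down the precise relationship between the section differential and the connection-induced horizontal distribution — together with verifying that the conclusion is genuinely a consequence of matching the two direct-sum decompositions rather than of any holomorphicity or curvature hypothesis; everything else is formal once the splitting $T_{K}\mathcal{C}=V_{K}\mathcal{C}\oplus H^{\nabla}_{K}\mathcal{C}$ and Proposition \ref{PropTANC} are in hand.
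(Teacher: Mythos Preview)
Your proposal is correct and follows essentially the same route as the paper: push $\gamma$ into $\mathcal{C}^{\#}(J)$ via $K$, use the splitting $T_{K}\mathcal{C}^{\#}(J)=V_{K}\mathcal{C}^{\#}(J)\oplus H^{\nabla}_{K}\mathcal{C}$ from Proposition~\ref{PropTCSPLIT}, identify the vertical part of $K_{*}v$ with $\nabla_{v}K$, and then invoke Proposition~\ref{PropTANC}. The only cosmetic difference is that the paper computes the vertical projection explicitly via the tautological section $\phi$ (writing $P^{\nabla}K_{*}v=\pi^{*}\nabla_{K_{*}v}\phi=\nabla_{v}K$), whereas you cite this as the standard fact that the vertical component of a section's differential is its covariant derivative.
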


\begin{proof}
As $\gamma(t) \in M^{\#}$, $K(\gamma(t))\in \mathcal{C}^{\#}(J)$, so that 
$K_{*}v \in T_{K}\mathcal{C}^{\#}(J)$. Since by Proposition \ref{PropTCSPLIT} \[T_{K}\mathcal{C}^{\#}(J)= V_{K}\mathcal{C}^{\#}(J) \oplus H^{\nabla}_{K}\mathcal{C},\] the vertical projection $P^{\nabla}$ of $K_{*}v$ lies in $V_{K}\mathcal{C}^{\#}(J)=T_{K}\mathcal{C}(E_{x})^{\#}(J)$.  Now by \cite{Gindi1}, $P^{\nabla}= \pi^{*}\nabla \phi$, where $\phi \in \Gamma(\pi^{*}EndE)$ is defined by $\phi|_{j}=j$, and since 
\[\pi^{*}\nabla_{K_{*}v} \phi=  \pi^{*}\nabla_{K_{*}v} (\pi^{*}K)= \nabla_{v}K,\] 
we find that $\nabla_{v}K \in T_{K}\mathcal{C}(E_{x})^{\#}(J)$. The proof of the proposition then follows from Proposition \ref{PropTANC}.  
\end{proof}
\begin{rmk} 
\label{RmkSSNC}
Although Proposition \ref{PropMPC} can certainly be proved by more direct methods that do not involve twistor spaces, we are currently using the twistor point of view of the $M^{\#}$  to derive stronger results (at least in the case when there are certain differential conditions imposed on $K$).
\end{rmk}

Let us now further suppose that $E$ is equipped with a fiberwise metric $g$   and $J$ and $K$ are sections of $\pi:\mathcal{T}(E,g)\longrightarrow M$. In this case we have derived a result analogous to Proposition \ref{PropMPC}. To state it, let us first orthogonally split  $E= Im[J,K] \oplus ker(J+K) \oplus ker(J-K)$ at the point $x \in M$ and define $P_{0}$, $P_{1}$ and $P_{-1}$ to be the corresponding projection operators. We then have:
\begin{prop}
\label{PropMPC2}
Let $x \in M^{\#}$ and suppose that there is a curve $\gamma$ in $M$ such that $\gamma(0)=x$, $\gamma(t)$ lies in  $M^{\#}$ for small $t \in \mathbb{R}$ and $\gamma'(0)=v$. Then
\begin{align*}
1) & \text{ for }  M^{\#}= M^{(m_{1},*)}, P_{1}(\nabla_{v} K) P_{1}=0\\
2) & \text{ for }  M^{\#}= M^{(*,m_{-1})}, P_{-1}(\nabla_{v} K) P_{-1}=0\\
3) & \text{ for }  M^{\#}= M^{(m_{1},m_{-1})}, P_{1}(\nabla_{v} K) P_{1}=0 \text{ and } P_{-1}(\nabla_{v} K) P_{-1}=0.
\end{align*}
\end{prop}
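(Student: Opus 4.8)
The plan is to mirror, almost verbatim, the twistorial argument of Proposition \ref{PropMPC}, but to carry it out inside the metric twistor space $\mathcal{T}=\mathcal{T}(E,g)$ rather than $\mathcal{C}$, and to invoke the metric description of the relevant vertical tangent space. The point is that every ingredient used in the $\mathcal{C}$-case has a verbatim metric counterpart already established in the paper: the tangent-space splitting of Proposition \ref{propTANT2} replaces that of Proposition \ref{PropTCSPLIT}, and the explicit description of $T_{K}\mathcal{T}^{\#}(J)$ in Proposition \ref{PropTANTAU} replaces that of Proposition \ref{PropTANC}.

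First I would note that since $\nabla$ is a metric connection and $J,K$ take values in $\mathcal{T}$, the holomorphic section $K:M\longrightarrow\mathcal{T}$ maps into $\mathcal{T}$, and the hypothesis $\gamma(t)\in M^{\#}$ translates into $K(\gamma(t))\in\mathcal{T}^{\#}(J)$ for small $t$. Differentiating at $t=0$ then yields $K_{*}v\in T_{K}\mathcal{T}^{\#}(J)$. Next, using the splitting $T_{K}\mathcal{T}^{\#}(J)=V_{K}\mathcal{T}^{\#}(J)\oplus H^{\nabla}_{K}\mathcal{T}$ from Proposition \ref{propTANT2}, the vertical projection $P^{\nabla}(K_{*}v)$ lies in $V_{K}\mathcal{T}^{\#}(J)=T_{K}\mathcal{T}(E_{x})^{\#}(J)$. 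Exactly as in the proof of Proposition \ref{PropMPC}, this vertical projection is identified with $\nabla_{v}K$ via the formula $P^{\nabla}=\pi^{*}\nabla\phi$ of \cite{Gindi1}, where $\phi|_{j}=j$ and $\pi^{*}\nabla_{K_{*}v}(\pi^{*}K)=\nabla_{v}K$. Hence $\nabla_{v}K\in T_{K}\mathcal{T}(E_{x})^{\#}(J)$.

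The final step is to read off the stated conditions from Proposition \ref{PropTANTAU}. For $M^{\#}=M^{(m_{1},*)}$ this membership means precisely $P_{1}(\nabla_{v}K)P_{1}=0$; for $M^{(*,m_{-1})}$ it means $P_{-1}(\nabla_{v}K)P_{-1}=0$; and for $M^{(m_{1},m_{-1})}$ both hold simultaneously, since $\mathcal{T}^{(m_{1},m_{-1})}=\mathcal{T}^{(m_{1},*)}\cap\mathcal{T}^{(*,m_{-1})}$ and the two conditions are imposed independently. One should also remark that the projections $P_{0},P_{1},P_{-1}$ defined globally along $\gamma$ restrict at $x$ to the projections appearing in Proposition \ref{PropTANTAU} associated to the fiber element $K_{x}$, so the fiberwise conclusion is exactly the asserted one.

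The only genuinely nontrivial point—and thus the main thing to verify carefully—is the identification of the vertical projection of $K_{*}v$ with $\nabla_{v}K$, which rests on the description of the horizontal distribution $H^{\nabla}\mathcal{T}$ and the formula $P^{\nabla}=\pi^{*}\nabla\phi$ from \cite{Gindi1}. Since this step uses only that $\nabla$ is a metric connection (so that it preserves $\mathcal{T}$ and the splitting of Proposition \ref{propTANT2} is available), it transfers without change from the $\mathcal{C}$-case, and the rest of the argument is purely a matter of substituting the metric tangent-space description of Proposition \ref{PropTANTAU} for its $\mathcal{C}$-analogue.
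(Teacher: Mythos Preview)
Your argument is precisely the alternative route the paper itself mentions, and it is correct \emph{under the additional hypothesis} $\nabla g=0$. The issue is that this hypothesis is not part of the statement: the paragraph preceding Proposition~\ref{PropMPC2} only adds a fiberwise metric $g$ and the assumption $J,K\in\Gamma(\mathcal{T})$, and the paper's own proof explicitly writes ``if we \emph{also} assume that $\nabla g=0$ then one can alternatively\dots'' when describing exactly your approach. Without $\nabla g=0$, parallel transport along $\nabla$ need not preserve $\mathcal{T}$, so the horizontal distribution $H^{\nabla}\mathcal{T}$ and hence the splitting of Proposition~\ref{propTANT2} are not available; your first step (projecting $K_{*}v$ onto the vertical part of $T\mathcal{T}$) cannot be carried out. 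Your opening assertion ``since $\nabla$ is a metric connection'' is thus unjustified.

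The paper's primary proof avoids this by working in $\mathcal{C}$ rather than $\mathcal{T}$: it invokes Proposition~\ref{PropMPC} as stated (no metric condition on $\nabla$), obtaining $\nabla_{v}K:\operatorname{Ker}(J+K)\to\operatorname{Im}(J+K)$, and then uses only the pointwise fact that $J,K\in\mathcal{T}$ (so $J+K$ is $g$-skew) to identify $\operatorname{Im}(J+K)=\operatorname{Ker}(J+K)^{\perp}=\operatorname{Im}[J,K]\oplus\operatorname{Ker}(J-K)$, whence $P_{1}(\nabla_{v}K)P_{1}=0$. The fix to your argument is therefore simply to run the twistorial computation in $\mathcal{C}$ (i.e., quote Proposition~\ref{PropMPC} directly) and then apply this one-line algebraic translation, rather than assuming $\nabla$ is compatible with $g$.
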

\begin{proof}
The proof follows directly from Proposition \ref{PropMPC} and the fact that $Im(J+K)=Im[J,K] \oplus ker(J-K)$. Note that if we also assume that $\nabla g=0$ then one can alternatively derive the above proposition by replacing $\mathcal{C}$ in the proof of Proposition \ref{PropMPC} with $\mathcal{T}$ and by using the description of $T_{K}\mathcal{T}(E_{x})^{\#}(J)$ given in Proposition \ref{PropTANTAU}. 
\end{proof}
It follows that if there exists a $v \in T_{x}M$ such that $P_{1}(\nabla_{v} K) P_{1} \neq 0$ then $M$ cannot equal $M^{(m_{1},*)}$ along any curve $\gamma$ that satisfies $\gamma'(0)=v$, i.e., the dimension of  $ker(J+K)$ along any such $\gamma$ must always change. Hence Propositions \ref{PropMPC} and \ref{PropMPC2} can be used to derive upper bounds on the dimensions of the $M^{\#}$. We refer to our forthcoming papers for explicit examples. 

We will now consider the holomorphic twistor spaces of bihermitian manifolds and will apply Theorem \ref{ThmBOUNDS} to study certain Poisson structures on the manifold.

\section{Bihermitian Manifolds}
\label{SecBM}
\subsection{Subvarieties and Bundle Maps}
\label{SecBMS}
Let $(M,g,J_{+},J_{-})$ be a bihermitian manifold, as described in Section \ref{SecBG}, so that \[\nabla^{+}J_{+}=0 \text{  and  } \nabla^{-}J_{-}=0,\] where $\nabla^{\pm}=\nabla^{Levi} \pm \frac{1}{2}g^{-1}H$ for a closed three form $H$. To build holomorphic subvarieties inside of $(M,J_{+})$, first note that by Corollary \ref{CorCSBG}, the total space of 
$(\mathcal{T} (TM),\mathcal{J}^{(\nabla^{-},J_{+})}) \longrightarrow (M,J_{+})$ is a complex manifold. Also, since $\nabla^{-}J_{-}=0$, $J_{-}$ is a parallel section of $\mathcal{T}$ and, by Corollary \ref{CorSKTHI}, $J_{+}$ is a holomorphic section. (Note that the holomorphicity condition on $J_{+}$ is equivalent to $J_{+}\nabla^{-}J_{+}=\nabla^{-}_{J_{+}}J_{+}$, which in turn can be shown to be equivalent to the integrability condition on $J_{+}$.) As these results are also true if we were to interchange $+$ with $-$, by Theorem \ref{ThmPHS} we have
\begin{prop}
\label{PropSUBBM}
Let $(M,g,J_{+},J_{-})$ be a bihermitian manifold. The following are holomorphic subvarieties of $M$ with respect to both $J_{+}$ and $J_{-}$:
\begin{align*}
&1)\ M_{\leq s}=\{x\in M | \ Rank[J_{+},J_{-}]|_{x} \leq 2s \} \\ 
&2)\ M_{(\leq r, \pm)}=\{x\in M | \ Rank(J_{+} \pm J_{-})|_{x} \leq 2r \}.
\end{align*}
\end{prop}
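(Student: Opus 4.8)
The plan is to recognize Proposition \ref{PropSUBBM} as a direct consequence of Theorem \ref{ThmPHS}, applied once for each of the two complex structures. The bihermitian relations $\nabla^{+}J_{+}=0$ and $\nabla^{-}J_{-}=0$ are precisely of the form needed to supply a parallel section on one side and, via the corollaries established above, a holomorphic section on the other, so that the degeneracy loci in the statement are exactly the subvarieties produced by Theorem \ref{ThmPHS}.

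First I would fix the complex structure $J_{+}$ and work with the bundle $TM$ equipped with the connection $\nabla^{-}$. Since $dH=0$, the pair $(g,J_{+})$ is an SKT structure, so Corollary \ref{CorSKTCS} guarantees that $\nabla^{-}$ is a metric connection with (1,1) curvature relative to $J_{+}$, and hence that $\mathcal{J}^{(\nabla^{-},J_{+})}$ is a genuine complex structure on $\mathcal{T}(TM)$ (this is Corollary \ref{CorCSBG}). In this setup $\nabla^{-}J_{-}=0$ says that $J_{-}$ is a parallel section of $\mathcal{T}$, while Corollary \ref{CorSKTHI} identifies $J_{+}$ as a holomorphic section. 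I would then invoke Theorem \ref{ThmPHS} with the parallel section taken to be $J_{-}$ and the holomorphic section taken to be $J_{+}$. The theorem immediately yields that
\[\{x\in M \mid Rank[J_{-},J_{+}]|_{x}\leq 2s\} \quad \text{and} \quad \{x\in M \mid Rank(J_{-}\pm J_{+})|_{x}\leq 2r\}\]
are holomorphic subvarieties of $(M,J_{+})$. Because $[J_{-},J_{+}]=-[J_{+},J_{-}]$ and $J_{-}\pm J_{+}=\pm(J_{+}\pm J_{-})$, these loci coincide setwise with the $M_{\leq s}$ and $M_{(\leq r,\pm)}$ of the statement; no rank is altered by the reordering.

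To obtain holomorphicity with respect to $J_{-}$ as well, I would run the identical argument with the roles of $+$ and $-$ interchanged: work instead with $(TM,\nabla^{+},J_{-})$, observe that $J_{+}$ is now the parallel section (from $\nabla^{+}J_{+}=0$) and $J_{-}$ the holomorphic section, and apply Theorem \ref{ThmPHS} a second time. This exhibits the same sets as holomorphic subvarieties of $(M,J_{-})$, completing the claim that they are subvarieties with respect to both complex structures.

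I expect little genuine obstacle here, since all of the analytic content has already been absorbed into Theorem \ref{ThmPHS} and its preceding corollaries; the proposition is essentially a bookkeeping corollary of those results applied twice. The only points demanding care are verifying that the hypotheses of Theorem \ref{ThmPHS} are literally satisfied in the metric setting (that each $\nabla^{\pm}$ is a metric connection with (1,1) curvature and that $J_{\pm}$ are sections of $\mathcal{T}$ rather than of the larger $\mathcal{C}$), and confirming that the interchange $[J_{+},J_{-}]\mapsto[J_{-},J_{+}]$ together with the sign flips in $J_{+}\pm J_{-}$ leaves the rank conditions invariant, so that the two applications of the theorem describe genuinely the same subvarieties.
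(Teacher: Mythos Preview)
Your proposal is correct and follows essentially the same route as the paper: the paper sets up $(\mathcal{T}(TM),\mathcal{J}^{(\nabla^{-},J_{+})})$ via Corollary~\ref{CorCSBG}, takes $J_{-}$ as the parallel section and $J_{+}$ as the holomorphic section (via Corollary~\ref{CorSKTHI}), applies Theorem~\ref{ThmPHS}, and then interchanges $+$ and $-$. Your additional remark that the sign changes in $[J_{-},J_{+}]$ and $J_{-}\pm J_{+}$ leave the rank loci unchanged is a harmless clarification the paper leaves implicit.
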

Let us now fix the complex structure $J_{+}$ on $M$. As in the general case of Section \ref{SecTHS}, to derive the holomorphicity of the above subvarieties in $(M,J_{+})$, we will first consider the following holomorphic bundles:  $T^{1,0}_{-}$  and $T^{0,1}_{-}$, equipped with the $\overline{\partial}$-operator 
$\nabla^{-(0,1)}$, as well as $T^{1,0}_{+}$ and $T^{0,1}_{+}$, equipped with the $\overline{\partial}$-operator $(\nabla^{-})^{'(0,1)}:= \nabla^{-(0,1)} + \frac{1}{2}(\nabla^{-(0,1)}J_{+}) J_{+}$. By Section \ref{ExTF2}, this latter (0,1) connection equals $\nabla^{Ch(0,1)}$, where $\nabla^{Ch}$ is the Chern connection on $TM$ that is associated to $(g,J_{+})$. (Note that $\nabla^{Ch(0,1)}= \nabla^{-(0,1)}$ on $T^{0,1}_{+}$.)

By Proposition \ref{PropBUNM}, we then have: 

\begin{prop}
\label{PropBUNBG}
Let $(M,g,J_{+},J_{-})$ be a bihermitian manifold. The following are holomorphic maps between the specified bundles  that are fibered over $(M,J_{+})$:
\begin{align*}
&1)\ J_{+}+J_{-}: T^{0,1}_{+} \longrightarrow T^{0,1}_{-}  \ \ \ \ \ \ \  2)\ J_{+}-J_{-}: T^{0,1}_{+} \longrightarrow T^{1,0}_{-} \\
&3)\ J_{+}+J_{-}: T^{1,0}_{-} \longrightarrow T^{1,0}_{+} \ \ \ \ \ \ \ 4)\ J_{+}-J_{-}: T^{0,1}_{-} \longrightarrow T^{1,0}_{+} \\
& \ \ \ \ \  \ \  \ \ \ \ \  \ \  \ \ 5)\ [J_{+},J_{-}]: T^{0,1}_{+} \longrightarrow T^{1,0}_{+}. 
\end{align*}
With the appropriate holomorphic structures on the bundles, the above statement is also true if we were to interchange $+$ with $-$. 
\end{prop}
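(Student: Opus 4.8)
The plan is to recognize Proposition \ref{PropBUNBG} as the specialization of the general Proposition \ref{PropBUNM} to the bihermitian setting, with the roles of the parallel and holomorphic sections played by $J_-$ and $J_+$ respectively. First I would fix $J_+$ as the complex structure on the base and take $(E,\nabla,I)=(TM,\nabla^-,J_+)$. As recorded in the discussion preceding Proposition \ref{PropSUBBM}, the condition $\nabla^- J_-=0$ says that $J_-$ is a parallel section of $(\mathcal{T}(TM),\mathcal{J}^{(\nabla^-,J_+)})$, while Corollary \ref{CorSKTHI} gives that $J_+$ is a holomorphic section. Thus the hypotheses of Proposition \ref{PropBUNM} are met with the dictionary $J=J_-$ (parallel) and $K=J_+$ (holomorphic).

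The next step is to make the identification of the abstract bundles explicit. Since $J_-$ is parallel, $E^{1,0}_J=T^{1,0}_-$ and $E^{0,1}_J=T^{0,1}_-$ are holomorphic subbundles of $(TM_{\mathbb{C}},\nabla^{-(0,1)})$. Since $J_+$ is holomorphic, the connection $\nabla'=\nabla^-+\frac{1}{2}(\nabla^- J_+)J_+$ has $(1,1)$ curvature, and by the computation recalled in Section \ref{ExTF2} its $(0,1)$ part $(\nabla^{-})^{'(0,1)}$ equals $\nabla^{Ch(0,1)}$ for the Chern connection of $(g,J_+)$; with respect to this operator $E^{1,0}_K=T^{1,0}_+$ and $E^{0,1}_K=T^{0,1}_+$ are holomorphic. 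These are exactly the holomorphic structures declared in the paragraph preceding the proposition, so the five maps of Proposition \ref{PropBUNM} translate directly into items $1)$--$5)$, after absorbing an overall sign: multiplying a holomorphic bundle map by $-1$ (a constant, hence holomorphic, automorphism of the target) leaves it holomorphic, so that $J_--J_+$ and $J_+-J_-$ carry the same content, as do $[J_-,J_+]$ and $[J_+,J_-]$.

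Finally, for the interchange of $+$ and $-$, I would invoke the mirror setup: fix $J_-$ as the base complex structure, take the connection $\nabla^+$, and observe that $\nabla^+ J_+=0$ now makes $J_+$ parallel while $J_-$ is holomorphic for $\mathcal{J}^{(\nabla^+,J_-)}$ by Corollary \ref{CorSKTHI}. Applying Proposition \ref{PropBUNM} with $J=J_+$ and $K=J_-$, and using the Chern connection of $(g,J_-)$ to equip the $T_+$ factors with their holomorphic structures, yields the symmetric statement.

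I expect the only point requiring genuine care to be the bookkeeping of which $\overline{\partial}$-operator each bundle carries, since the maps mix bundles holomorphic for $\nabla^{-(0,1)}$ (the $T_-$ factors) with bundles holomorphic for $\nabla^{Ch(0,1)}$ (the $T_+$ factors). This compatibility, however, is precisely what Proposition \ref{PropBUNM} has already dealt with through the observation that $\nabla^{-(0,1)}=(\nabla^{-})^{'(0,1)}$ on $E^{0,1}_K$, so once the dictionary above is fixed there is no residual analytic obstacle and the proof reduces to a citation of Proposition \ref{PropBUNM}.
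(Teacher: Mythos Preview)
Your proposal is correct and follows exactly the approach the paper intends: the paragraph preceding Proposition \ref{PropBUNBG} sets up the dictionary $(E,\nabla,I)=(TM,\nabla^{-},J_{+})$, $J=J_{-}$ parallel, $K=J_{+}$ holomorphic, and the proposition is then stated with the words ``By Proposition \ref{PropBUNM}, we then have'' as its entire justification. Your handling of the sign discrepancy and the $+\leftrightarrow -$ interchange is accurate and indeed more explicit than what the paper writes.
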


\begin{cor}
\label{CorHBM}
$[J_{+},J_{-}]g^{-1} \in \Gamma(\wedge^{2}TM)$ induces a holomorphic section of $\wedge^{2}T^{1,0}_{+} \longrightarrow (M,J_{+})$ and of $\wedge^{2}T^{1,0}_{-} \longrightarrow (M,J_{-})$. 
\end{cor}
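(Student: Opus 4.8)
The plan is to obtain $[J_{+},J_{-}]g^{-1}$ as a composite of holomorphic bundle maps and then to observe that it is skew-symmetric, so that it is in fact a section of $\wedge^{2}T^{1,0}_{+}$ rather than of the full tensor square. Concretely, this corollary is the specialization of the general metric-case observation recorded after Proposition \ref{PropBUNM} to the data $E=TM$, $\nabla=\nabla^{-}$, $J=J_{-}$ and $K=J_{+}$, with base complex structure $J_{+}$; so the first step is simply to check that this data satisfies the hypotheses there. Since $\nabla^{-}J_{-}=0$ and $J_{+}\nabla^{-}J_{+}=\nabla^{-}_{J_{+}}J_{+}$ (Corollaries \ref{CorCSBG} and \ref{CorSKTHI}), $J_{-}$ is parallel and $J_{+}$ is holomorphic, exactly as required.

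For the composition, recall from part 5) of Proposition \ref{PropBUNBG} that $[J_{+},J_{-}]: T^{0,1}_{+} \to T^{1,0}_{+}$ is holomorphic over $(M,J_{+})$. Next I would use that $\nabla^{-}$ is a metric connection, so $\nabla^{-}g=0$; thus $g$ is a parallel, hence holomorphic, bundle isomorphism $TM_{\mathbb{C}} \to T^{*}M_{\mathbb{C}}$ intertwining the corresponding $\overline{\partial}$-operators. Compatibility of $g$ with $J_{+}$ (that is, $J_{+}$ is $g$-skew) exchanges the $\pm i$ eigenspaces of $J_{+}$ with those of its dual, so $g^{-1}$ restricts to a holomorphic isomorphism $T^{*1,0}_{+} \to T^{0,1}_{+}$. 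Composing with map 5) then shows that $[J_{+},J_{-}]g^{-1}: T^{*1,0}_{+} \to T^{1,0}_{+}$ is holomorphic, i.e. a holomorphic section of $Hom(T^{*1,0}_{+},T^{1,0}_{+}) \cong T^{1,0}_{+} \otimes T^{1,0}_{+}$.

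Finally I would check skew-symmetry: since both $J_{+}$ and $J_{-}$ are $g$-skew, $[J_{+},J_{-}]$ is $g$-skew-adjoint, so $[J_{+},J_{-}]g^{-1}$ is antisymmetric and therefore lies pointwise in $\wedge^{2}T^{1,0}_{+}$. Because antisymmetry is a $\mathbb{C}$-linear algebraic condition cutting out a holomorphic subbundle of $T^{1,0}_{+}\otimes T^{1,0}_{+}$, the holomorphic section above descends to a holomorphic section of $\wedge^{2}T^{1,0}_{+}$. The statement for $(M,J_{-})$ then follows verbatim from the $+\leftrightarrow-$ interchange symmetry noted in Proposition \ref{PropBUNBG}.

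The main obstacle is the bookkeeping of holomorphic structures: one must confirm that the $\overline{\partial}$-operator induced on $T^{*}M_{\mathbb{C}}$ is the one for which $g$ is holomorphic, track the eigenspace assignments of $g$ and $g^{-1}$ precisely, and verify that antisymmetry genuinely defines a holomorphic (not merely smooth) subbundle so that the section lives in $\wedge^{2}$. Each of these is a routine linear-algebra verification once the identifications are fixed.
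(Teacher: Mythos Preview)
Your proposal is correct and follows precisely the route the paper intends: the corollary is the bihermitian specialization of the metric-case bullet after Proposition~\ref{PropBUNM}, obtained by composing the holomorphic map $[J_{+},J_{-}]\colon T^{0,1}_{+}\to T^{1,0}_{+}$ with the holomorphic isomorphism $g^{-1}\colon T^{*1,0}_{+}\to T^{0,1}_{+}$ and noting skew-symmetry. The paper offers no separate argument for this corollary; your write-up simply makes explicit the bookkeeping (metricity of the connection, eigenspace identifications, and that skewness cuts out a holomorphic subbundle) that the paper leaves implicit.
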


\begin{rmk}
Note that Corollary \ref{CorHBM} was first derived in \cite{Hitchin2} by using other methods and thus the holomorphicity of $M_{\leq s}$ is already known in the literature. However, the holomorphicity of the other maps in Proposition \ref{PropBUNBG} and  the holomorphicity of $M_{(\leq r, \pm)}$ are new to the literature. \footnote{ We note here that Marco Gualtieri has derived the holomorphicity of the $M_{(\leq r, \pm)}$ by using the generalized geometry description of bihermitian geometry.}
\end{rmk}
\begin{rmk}
\label{RmkIND2}
By Proposition \ref{PropBUNM2}, the holomorphicity of $[J_{+},J_{-}]$, as given in Part 5) of Proposition \ref{PropBUNBG}, is independent of the bihermitian condition $dH=0$. 
\end{rmk}

\subsection{A Twistor Point of View and Holomorphic Poisson Structures}
Using the holomorphic maps of Proposition \ref{PropBUNBG}, we not only obtain the holomorphic subvarieties $M_{\leq s}$ and $M_{(\leq r,\pm)}$ but also the subvarieties $M^{(m_{1},m_{-1})}$ and the rest of the $M^{\delta}$ which were defined for a more general setup in Definition \ref{DefMPOUND} (see also Notation \ref{notaMPOUND22}). There is an added interest in studying these subvarieties in the bihermitian setup because they are related to known real and holomorphic Poisson structures \cite{Lyak1,Hitchin2} (see also \cite{Apost1}): 
\begin{prop}
\label{PropSTRUC}
 Let $(M,g, J_{+}, J_{-})$ be a bihermitian manifold. The following are real Poisson structures on $M$:
\begin{align*}
   \sigma=[J_{+},J_{-}]g^{-1} \ \ \ \text{ and } \ \ \ \lambda_{\pm}= (J_{+} \pm J_{-})g^{-1}.
\end{align*}
Moreover, if we let $\sigma_{+}$ be the (2,0) component of $\sigma$ with respect to $J_{+}$ then it is a holomorphic Poisson structure on $(M,J_{+}).$ The same statement is true if we replace $+$ with $-.$
\end{prop}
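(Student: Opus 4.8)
The statement has three layers: skew-symmetry of the three tensors, the Jacobi (Poisson) identity for each, and the holomorphic-Poisson property of $\sigma_+$. The plan is to dispatch skew-symmetry and holomorphicity immediately, then reduce the Poisson identity for $\sigma$ and the holomorphic-Poisson identity for $\sigma_+$ to a single vanishing Schouten bracket, and finally treat $\lambda_\pm$ by a separate route.

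First, skew-symmetry. Since each $J_\pm$ is $g$-orthogonal with $J_\pm^2=-1$, it is $g$-skew-adjoint, $g(J_\pm v,w)=-g(v,J_\pm w)$; hence $J_\pm g^{-1}\colon T^*M\to TM$ is skew and $\lambda_\pm=(J_+\pm J_-)g^{-1}$ is a genuine section of $\wedge^2 TM$. The same computation gives $g([J_+,J_-]v,w)=-g(v,[J_+,J_-]w)$, so $\sigma=[J_+,J_-]g^{-1}$ is a bivector as well. Moreover the holomorphicity of $\sigma_+$ is already in hand: it is exactly the content of Corollary \ref{CorHBM}, which identifies $[J_+,J_-]g^{-1}$ with a holomorphic section of $\wedge^2 T^{1,0}_+$.

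The key reduction for $\sigma$ is a type computation. A direct check gives $\{J_+,[J_+,J_-]\}=0$, so $[J_+,J_-]$ anticommutes with $J_+$ and therefore interchanges $T^{1,0}_+$ and $T^{0,1}_+$; since $g^{-1}$ is $J_+$-Hermitian it sends $T^{*1,0}_+$ into $T^{0,1}_+$, and composing shows $\sigma$ carries $T^{*1,0}_+$ into $T^{1,0}_+$. Hence $\sigma$ has no $(1,1)$ part with respect to $J_+$: it equals $\sigma_+ + \overline{\sigma_+}$, where $\sigma_+=\sigma^{2,0}$ is the holomorphic bivector of Corollary \ref{CorHBM}. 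I would then expand the real Schouten--Nijenhuis bracket, which is symmetric on bivectors, as $[\sigma,\sigma]_{SN}=[\sigma_+,\sigma_+]_{SN}+2[\sigma_+,\overline{\sigma_+}]_{SN}+[\overline{\sigma_+},\overline{\sigma_+}]_{SN}$. Because $\sigma_+$ is holomorphic, in local holomorphic coordinates its coefficients are holomorphic while those of $\overline{\sigma_+}$ are antiholomorphic, so the mixed term $[\sigma_+,\overline{\sigma_+}]_{SN}$ vanishes identically; the remaining two terms have pure types $(3,0)$ and $(0,3)$ and are complex conjugate. Consequently $[\sigma,\sigma]_{SN}=0$ if and only if $[\sigma_+,\sigma_+]_{SN}=0$, which \emph{simultaneously} yields the real Poisson property of $\sigma$ and the holomorphic Poisson property of $\sigma_+$. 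Note that when $\dim_{\mathbb C}M=2$ this is automatic, since $\wedge^3 T^{1,0}_+=0$; the genuine content appears only in dimension $\geq 3$, e.g. on $\mathbb{CP}^3$.

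It remains to establish $[\sigma_+,\sigma_+]_{SN}=0$ together with the Poisson property of $\lambda_\pm$, and this is where the main obstacle lies. I would exploit the generalized K\"ahler packaging of Section \ref{SecBG}: the data $(g,J_+,J_-,H)$ is equivalent to a commuting pair of $H$-integrable generalized complex structures $\mathcal J_1,\mathcal J_2$ on $TM\oplus T^*M$, whose upper-right bivector blocks recover $\lambda_-$ and $\lambda_+$ up to a constant factor. The anchor projection of any $H$-integrable generalized complex structure is a Poisson bivector, the Jacobi identity being a direct consequence of the Courant-integrability of its $+i$-eigenbundle together with $dH=0$; this gives the Poisson property of $\lambda_\pm$ at once, and identifies $\sigma_+$ with the holomorphic Poisson structure attached to $\mathcal J_2$ away from its type-change locus, so that $[\sigma_+,\sigma_+]_{SN}=0$. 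The hard part is precisely this integrability bookkeeping—converting the closedness of $H$ and the parallel conditions $\nabla^\pm J_\pm=0$ into the vanishing of these brackets. Alternatively one may verify $[\sigma_+,\sigma_+]_{SN}=0$ by a direct coordinate computation using the integrability of $J_-$, but the generalized-geometry route is cleaner and is the one underlying the derivations of \cite{Lyak1,Hitchin2,Apost1}.
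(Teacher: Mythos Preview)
The paper does not actually prove this proposition: it is stated as a result due to Lyakhovich--Zabzine \cite{Lyak1} and Hitchin \cite{Hitchin2} (with a pointer to \cite{Apost1}) and is quoted without argument. So there is no ``paper's own proof'' to compare against; your proposal already goes well beyond what the paper supplies.

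That said, your sketch is sound and the reductions you make are the right ones. The skew-symmetry checks are correct; the observation $\{J_+,[J_+,J_-]\}=0$ and the resulting type decomposition $\sigma=\sigma_++\overline{\sigma_+}$ are exactly the mechanism behind Corollary~\ref{CorHBM}, and your use of holomorphicity to kill the mixed Schouten term $[\sigma_+,\overline{\sigma_+}]_{SN}$ is a clean way to reduce the real Jacobi identity for $\sigma$ to the holomorphic one for $\sigma_+$. Where you end up---deferring the actual vanishing $[\sigma_+,\sigma_+]_{SN}=0$ and the Poisson property of $\lambda_\pm$ to the generalized K\"ahler formalism of \cite{Gualt1,Hitchin2,Lyak1}---is precisely where the paper also defers, just more tersely. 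In short: your route is correct, strictly more informative than the paper's (non-)proof, and honest about the step it does not carry out in full; the paper simply cites the same literature and moves on.
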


   It then follows that some of the $M^{\delta}$ are the degeneracy and constant rank loci of the above Poisson structures while others refine the structure of these loci. More specifically, by Proposition \ref{PropMS} we can use the $M^{(m_{1},m_{-1})}$ to decompose the constant rank loci of the holomorphic Poisson structure $\sigma$ into open subsets:

\begin{equation*}
M_{s}= \bigcup_{m_{1}+m_{-1}=m-s} M^{(m_{1},m_{-1})},
\end{equation*}
where $dim_{\mathbb{C}}M=m.$
 
Our goal then is to determine which $M^{(m_{1},m_{-1})}$ exist in a given $M_{s}$ and, for those that do, derive bounds on their dimensions. This would allow us to understand how the dimensions of $ker(J_{+}+ J_{-})$ and $ker(J_{+}-J_{-})$ vary along the constant rank loci of $[J_{+},J_{-}]$, which are the loci of the holomorphic Poisson structure $\sigma$. 

To derive such results, we will first view, using Section \ref{SecATVP}, the Poisson loci and the $M^{(m_{1},m_{-1})}$ as intersections of holomorphic subvarieties and complex submanifolds \textit{in the holomorphic twistor spaces} $(\mathcal{T} (TM),\mathcal{J}^{(\nabla^{-},J_{+})})$ and $(\mathcal{T} (TM),\mathcal{J}^{(\nabla^{+},J_{-})})$. For instance, using the holomorphic section $J_{+}: (M,J_{+}) \longrightarrow (\mathcal{T} (TM), \\ \mathcal{J}^{(\nabla^{-},J_{+})})$, we can express 
\begin{equation*}
J_{+}(M_{s})= J_{+}(M) \cap \mathcal{\mathcal{T}}_{s}(J_{-}),
\end{equation*}
where the $\mathcal{\mathcal{T}}_{s}(J_{-})$ are the holomorphic subvarieties in $\mathcal{T}$ that were defined in Proposition \ref{PropTESUB} for a more general setting.
Similarly, using the holomorphic section $J_{-}: (M,J_{-}) \longrightarrow (\mathcal{T} (TM),\mathcal{J}^{(\nabla^{+},J_{-})})$, we can express
\begin{equation*}
J_{-}(M_{s})= J_{-}(M) \cap \mathcal{\mathcal{T}}_{s}(J_{+}).
\end{equation*}
Moreover, as we know from Section \ref{SecATVP}, these equations are still true if we respectively replace $M_{s}$ with $M^{(m_{1},m_{-1})}$ or $M^{\delta}$ and ${\mathcal{T}}_{s}$ with ${\mathcal{T}}^{(m_{1},m_{-1})}$ or ${\mathcal{T}}^{\delta}$. 

Now it was precisely this twistor point of view that led us to derive bounds on the $M^{\delta}$ in Section \ref{Sec7.2.2} for a more general setup (see Theorem \ref{ThmBOUNDS}). In the bihermitian setting, this yields in particular the following bounds on the $M^{(m_{1},m_{-1})}$.

\begin{thm}
\label{ThmBOUNDS22} Let $(M,g, J_{+}, J_{-})$ be a bihermitian manifold of complex dimension $m$.
If $M^{\#}$ is nonempty then the complex dimension of each of its components is bounded as follows:
\begin{align*}
1)& \ dimM^{(m_{1},*)} \geqslant m- \frac{m_{1}(m_{1}-1)}{2}\\
2)& \ dimM^{(*,m_{-1})} \geqslant m- \frac{m_{-1}(m_{-1}-1)}{2}\\
3)& \ dimM^{(m_{1},m_{-1})} \geqslant m- \frac{m_{1}(m_{1}-1)+ m_{-1}(m_{-1}-1)}{2}.
\end{align*}
\end{thm}

\begin{cor}
\label{CorBOUNDS22}
The following are bounds on the complex dimensions of some of the $M^{\#}$:
\begin{align*}
1) & \ dimM^{(1,2)}, dimM^{(2,1)} \geqslant m-1\\
2) & \ dimM^{(2,2)} \geqslant m-2 \\
3) & \ dimM^{(2,3)}, dimM^{(3,2)} \geqslant m-4 \\
4) & \ dimM^{(*,2)}, dimM^{(2,*)} \geqslant m-1.
\end{align*}
\end{cor}
\begin{rmk} Note that the bounds given in 1) follow from those in 4) since if $M^{(1,2)}$ is nonempty then it is open in 
$M^{(*,2)}.$ 
\end{rmk}

We have thus used holomorphic twistor spaces to bound the dimensions of the $M^{(m_{1},m_{-1})}$, which are open subsets of the constant rank loci of the holomorphic Poisson structure $\sigma=[J_{+},J_{-}]g^{-1}$. In Section \ref{secBMBPG}, we will use these bounds to derive results about the \textit{existence} of the $M^{(m_{1},m_{-1})}$ in $M$, especially for the case when $M= \mathbb{CP}^{3}$.

\subsubsection{Bounds and Poisson Geometry}
\label{SecBPGB}
Before we present those existence results, we note here that one can use Hamiltonian flows associated with the Poisson structures of Proposition \ref{PropSTRUC} together with Proposition \ref{PropMS} to derive the following bounds on the dimensions of the $M^{\#}$. 
\begin{prop}
\label{PropBDBG}
Let $dim_{\mathbb{C}}M=m.$ If $M^{\#}$ is nonempty then the complex dimension of each of its components is bounded as follows:
\begin{align*}
1) & \dim M^{(m_{1},*)} \geqslant m-m_{1}\\
2) & \dim M^{(*,m_{-1})} \geqslant m-m_{-1}\\
3) & \dim M^{(m_{1},m_{-1})} \geqslant m-(m_{1}+m_{-1}).
\end{align*}
\end{prop}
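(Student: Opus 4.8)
The plan is to read off each $M^{\#}$ as a constant-rank locus of one of the three real Poisson structures of Proposition \ref{PropSTRUC} and then invoke the Poisson bound of Proposition \ref{PropBPOIS}. The bridge between the two is the elementary observation that, since $g^{-1}$ is a fiberwise isomorphism, $\mathrm{Rank}\,\lambda_{+} = \mathrm{Rank}(J_{+}+J_{-})$, $\mathrm{Rank}\,\lambda_{-} = \mathrm{Rank}(J_{+}-J_{-})$ and $\mathrm{Rank}\,\sigma = \mathrm{Rank}[J_{+},J_{-}]$ at every point. Throughout I would use that $\dim_{\mathbb{C}}M = m$ forces $\dim_{\mathbb{R}}M = 2m$ and that each $M^{\#}$, being a complex subvariety, has $\dim_{\mathbb{R}} = 2\dim_{\mathbb{C}}$, so that a real Poisson bound halves into the stated complex bound.

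For Part 1) I would note that $\dim\ker(J_{+}+J_{-}) = 2m_{1}$ is equivalent to $\mathrm{Rank}(J_{+}+J_{-}) = 2m - 2m_{1}$, so $M^{(m_{1},*)}$ is exactly the constant-rank locus $\{x : \mathrm{Rank}\,\lambda_{+}|_{x} = 2m - 2m_{1}\}$. Applying Proposition \ref{PropBPOIS} to $(M,\lambda_{+})$ gives $\dim_{\mathbb{R}}M^{(m_{1},*)} \geq 2m - 2m_{1}$, hence $\dim_{\mathbb{C}}M^{(m_{1},*)} \geq m - m_{1}$. Part 2) is identical with $\lambda_{-}$ in place of $\lambda_{+}$ and $J_{+}-J_{-}$ in place of $J_{+}+J_{-}$.

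For Part 3) I would first use Lemma \ref{LemCOM}, namely $\ker[J_{+},J_{-}] = \ker(J_{+}+J_{-}) \oplus \ker(J_{+}-J_{-})$, to compute that on $M^{(m_{1},m_{-1})}$ the kernel of $[J_{+},J_{-}]$ has real dimension $2(m_{1}+m_{-1})$, so that $\mathrm{Rank}\,\sigma = 2m - 2(m_{1}+m_{-1})$ there. Thus $M^{(m_{1},m_{-1})}$ lies inside the constant-rank locus $M_{s}$ of $\sigma$ with $s = m - m_{1} - m_{-1}$, and Proposition \ref{PropBPOIS} applied to $(M,\sigma)$ bounds $\dim_{\mathbb{R}}M_{s} \geq 2m - 2(m_{1}+m_{-1})$.

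The one point that needs care --- and the only real obstacle --- is that, unlike in Parts 1) and 2), $M^{(m_{1},m_{-1})}$ need not be all of the rank locus $M_{s}$ but only one of its strata. I would close the gap using the fact recorded after Lemma \ref{LemCOM} that the dimensions of $\ker(J_{+}\pm J_{-})$ cannot locally increase, so that $M^{(m_{1},m_{-1})}$ is open in $M_{s}$ and therefore inherits the same lower bound on its local dimension (equivalently, the symplectic leaf of $\sigma$ through each point of $M^{(m_{1},m_{-1})}$ has dimension $\mathrm{Rank}\,\sigma$ and, by this openness, stays inside $M^{(m_{1},m_{-1})}$). Halving once more gives $\dim_{\mathbb{C}}M^{(m_{1},m_{-1})} \geq m - (m_{1}+m_{-1})$, completing the proposition.
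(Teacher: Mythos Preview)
Your proposal is correct and follows essentially the same route as the paper: identify $M^{(m_{1},*)}$, $M^{(*,m_{-1})}$, and $M^{(m_{1},m_{-1})}$ with (or as open pieces of) the constant-rank loci of the real Poisson structures $\lambda_{+}$, $\lambda_{-}$, and $\sigma$ from Proposition~\ref{PropSTRUC}, and then apply Proposition~\ref{PropBPOIS}. In particular, your handling of Part~3)---observing that $M^{(m_{1},m_{-1})}$ is only a stratum of $M_{s}$ and using the local openness noted after Lemma~\ref{LemCOM} (equivalently, that the symplectic leaf through a point of $M^{(m_{1},m_{-1})}$ locally stays there)---matches the paper's remark that ``$M_{s}$ is locally $M^{(m_{1},m_{-1})}$.''
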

\begin{rmk}
\label{RmkMODB}
Note that the bounds in 1) and 2) modify some of those in 3). For example, if $M^{(m_{1},1)}$ is nonempty then its complex dimension is really $\geqslant m-m_{1}$ and not just $m-m_{1}-1$. The reason is that by Proposition \ref{PropM'P}, $M^{(m_{1},1)}$ is open in $M^{(m_{1},*)}$. Also note that if $M^{(1,*)}$ is nonempty then its complex dimension is $m$. 
\end{rmk}

To compare the above bounds to the ones derived from twistor space, note that only a few of the twistor bounds of Theorem \ref{ThmBOUNDS22}, which we list in Corollary \ref{CorBOUNDS22}, are stronger than the Poisson bounds given in Proposition \ref{PropBDBG}. Though, in the next section we will give a corollary of the twistor bounds that cannot be derived by using the Poisson bounds alone.  
\begin{rmk}
\label{RmkTPBO} It can be shown that the fact that $\sigma$ and $\lambda_{\pm}$ are Poisson structures, as stated in Proposition \ref{PropSTRUC}, does not depend on the bihermitian condition $dH=0$. Hence the bounds of Proposition \ref{PropBDBG}  are true regardless of this condition.  This is to be compared to the bounds derived from holomorphic twistor spaces (Theorem \ref{ThmBOUNDS22}), where the condition $dH=0$ was certainly used. 
\end{rmk}
 \subsection{Existence Results}
 \label{secBMBPG}
 We will now use the twistor bounds in Corollary \ref{CorBOUNDS22} to derive the following existence result about the $M^{\#}$  in a bihermitian manifold (note $M_{0}:= M_{\leq 0}$). 

\begin{thm}
\label{thmER1}
Let $(M,g,J_{+},J_{-})$ be a bihermitian manifold such that $dim_{\mathbb{C}}M=3$ and $dim_{\mathbb{C}}M_{0}\leq 1$. Then $M^{(2,1)}$  and $M^{(1,2)}$ are empty in $M$.
\end{thm}
\begin{proof}
If $M^{(2,1)}$ were nonempty then by the twistor bounds of Corollary \ref{CorBOUNDS22}, $dim_{\mathbb{C}}M^{(2,1)} \geq 2$. Yet, $M^{(2,1)} \subset M_{0}$ and $dim_{\mathbb{C}}M_{0}\leq 1$. Hence $M^{(2,1)}$ is empty in $M$. A similar argument shows that $M^{(1,2)}$ is also empty.  
\end{proof}

We will now apply this theorem to further study the algebraic interaction of $J_{+}$ and $J_{-}$ on $M_{0}$, which is  the zero rank locus of the holomorphic Poisson structure $\sigma$ given in Proposition \ref{PropSTRUC}.
\begin{thm}
\label{thmER2}
Let $(M,g,J_{+},J_{-})$ be a connected bihermitian manifold such that $dim_{\mathbb{C}}M=3$ and $dim_{\mathbb{C}}M_{0}\leq 1$. Then $M_{0}= M^{(0,3)}$ or $M_{0}= M^{(3,0)}$.
\end{thm}

\begin{proof}
Since $M$ is connected and $dim_{\mathbb{C}}M=3$, by Propositions \ref{PropMS} and \ref{PropM'P}, $M_{0}= M^{(0,3)} \cup M^{(2,1)}$ or $M_{0}= M^{(3,0)} \cup M^{(1,2)}.$ By Theorem \ref{thmER1}, $M^{(2,1)}$  and $M^{(1,2)}$ are empty in $M$. Hence $M_{0}= M^{(0,3)}$ or $M_{0}= M^{(3,0)}$.

\end{proof}
Under the assumptions in the above theorem,  $J_{+}$ must then equal $J_{-}$ or $-J_{-}$ on the zero rank locus of the holomorphic Poisson structure $\sigma$, and $M^{(2,1)}$  and $M^{(1,2)}$ cannot exist in the manifold. 

As for some examples of the above setup, we have:

\begin{prop}
\label{propER3}
There exist bihermitian structures on $\mathbb{CP}^{3}$ that satisfy the conditions in Theorems \ref{thmER1} and \ref{thmER2}.
\end{prop}
\begin{proof}
Given the standard complex structure $I$ on $\mathbb{CP}^{3}$, there exists a holomorphic Poisson structure, $\tilde{\sigma}$, on $\mathbb{CP}^{3}$ that vanishes only on points and complex curves \cite{Polish1}. Using a construction from \cite{Gualt3} one may build bihermitian structures $(g,J_{+},J_{-})$ on $\mathbb{CP}^{3}$ such that $J_{+}=I$ and the constant rank loci of $[J_{+},J_{-}]$ are the same as those for $Re\tilde{\sigma}$.
\end{proof}

\begin{rmk}
Note that the Poisson bounds of Proposition \ref{PropBDBG} are too weak to be used to derive Theorems \ref{thmER1} and \ref{thmER2}. For they would only yield a lower bound of one on the complex dimensions of $M^{(2,1)}$ and $M^{(1,2)}$ (see Remark \ref{RmkMODB}). Thus we really needed the twistor bounds of Theorem \ref{ThmBOUNDS22} to arrive at our results. 
\end{rmk}

\begin{rmk}
We can use the other twistor bounds of Theorem \ref{ThmBOUNDS22} to derive results similar to Theorems \ref{thmER1} and \ref{thmER2} but in higher dimensions. 
\end{rmk}

\section{Acknowledgments}
I would like to thank Blaine Lawson, Nigel Hitchin and Jason Starr for helpful discussions.

\textsc{Department of Mathematics, UC Riverside, Riverside, CA 92521} \\

\textit{E-mail Address:} \texttt{Gindis@ucr.edu}

\end{large}
\end{document}